\documentclass{article}

\usepackage{amsmath}       
\usepackage{amsthm}        
\usepackage{amssymb}       
\usepackage{braket}       
\usepackage{amsfonts}
\usepackage[all]{xy}
\usepackage{xspace}
\usepackage{calc}
\usepackage{comment}
\usepackage{pgfplots}
\usepgflibrary{fpu}
\usepackage{mathtools}
\usepackage{tabularx}
\usepackage{ifthen}             
\usepackage{graphicx}
\usepackage{docmute}
\usepackage{array}
\usepackage{subfloat} 
\usepackage{textcomp}
\usepackage{bbm}            
\usepackage{etoolbox}  
\usepackage{verbatim}
\usepackage{stmaryrd}
\usepackage{cancel}
\usepackage{xcolor}
\colorlet{Black}{black}
\usepackage{tensor}
\usepackage{enumerate}  
\usepackage{thm-restate}
\usepackage[numbers,sort]{natbib}
\usepackage{enumitem}
\usepackage{tocloft}
\usepackage{pdfpages}
\usepackage{soul}
\usepackage[export]{adjustbox}

\usepackage[margin=3cm]{geometry}

\pagestyle{plain}

\makeatletter
\def\calign@preamble{%
   &\hfil\strut@
    \setboxz@h{\@lign$\m@th\displaystyle{##}$}%
    \ifmeasuring@\savefieldlength@\fi
    \set@field
    \hfil
    \tabskip\alignsep@
}
\let\cmeasure@\measure@
\patchcmd\cmeasure@{\divide\@tempcntb\tw@}{}{}{}
\patchcmd\cmeasure@{\divide\@tempcntb\tw@}{}{}{}
\patchcmd\cmeasure@{\ifodd\maxfields@
  \global\advance\maxfields@\@ne
  \fi}{}{}{}    
\newenvironment{calign}
{%
  \let\align@preamble\calign@preamble
  \let\measure@\cmeasure@
  \align
}
{%
  \endalign
}  
\makeatother
\tikzset{
    master/.style={
        execute at end picture={
            \coordinate (lower right) at (current bounding box.south east);
            \coordinate (upper left) at (current bounding box.north west);
        }
    },
    slave/.style={
        execute at end picture={
            \pgfresetboundingbox
            \path (upper left) rectangle (lower right);
        }
    }
}

\newcommand{\Tr}{\mathrm{Tr}}

\theoremstyle{plain} 

\newtheorem{theorem}{Theorem}[section]
\newtheorem{lemma}[theorem]{Lemma}
\newtheorem{corollary}[theorem]{Corollary}          
\newtheorem{proposition}[theorem]{Proposition}

\newtheorem*{theorem*}{Theorem}
\newtheorem*{proposition*}{Proposition}
\newtheorem*{definition*}{Definition}
\newtheorem*{corollary*}{Corollary}

\theoremstyle{definition} 
\newtheorem{definition}[theorem]{Definition}
\newtheorem{remark}[theorem]{Remark}

\newtheorem{notation}[theorem]{Notation}

\newtheorem{problem}[theorem]{Problem}

\theoremstyle{remark}  
\newtheorem{example}[theorem]{Example}


\newtheoremstyle{special_statement} 
        {\topskip}
        {\topskip}
        {\addtolength{\leftskip}{2.5em} \itshape }
        {}
        {\bfseries}
        {:}
        {.5em}
        {}
\theoremstyle{special_statement}



\DeclareMathOperator{\Hom}{Hom}
\DeclareMathOperator{\End}{End}

\newcommand{\id}{\mathrm{id}}

\newcommand{\Bimod}{\mathrm{Bimod}}

\newcommand{\Aut}{\ensuremath{\mathrm{Aut}}}

\newcommand{\Rep}{\mathrm{Rep}}
\newcommand{\Mat}{\mathrm{Mat}}

\newcommand{\QRel}{\mathrm{QRel}}
\newcommand{\Rel}{\mathrm{Rel}}
\newcommand{\Stoch}{\mathrm{Stoch}}
\newcommand{\Chan}{\mathrm{Chan}}
\newcommand{\Mod}{\mathrm{Mod}}

\newcommand{\Ann}{\mathrm{Ann}}

\newcommand{\CP}{\ensuremath{\mathrm{CP}}}

\newcommand{\Hilb}{\ensuremath{\mathrm{Hilb}}}
\newcommand{\TwoHilb}{\ensuremath{\mathrm{2Hilb}}}
\newcommand{\TwoFHilb}{\ensuremath{\mathrm{2FHilb}}}
\newcommand{\TwoRep}{\ensuremath{\mathrm{2Rep}}}

\newcommand{\F}{\ensuremath{\mathrm{SSFA}}}

\usepackage[colorlinks=true, linkcolor=black, citecolor=black, urlcolor=black, hypertexnames=false
        ]{hyperref}


\newcommand\ignore[1]{}
\usepackage[utf8]{inputenc}
\usepackage{authblk}


\title{Covariant quantum combinatorics with applications to zero-error communication}
\author{Dominic Verdon\thanks{dominic.verdon@bristol.ac.uk}}
\affil{School of Mathematics, University of Bristol}
\date{\today}

\begin{document}
\normalsize
\maketitle 

\begin{abstract}
We develop the theory of quantum (a.k.a. noncommutative) relations and quantum (a.k.a. noncommutative) graphs in the finite-dimensional covariant setting, where all systems (finite-dimensional $C^*$-algebras) carry an action of a compact quantum group $G$, and all channels (completely positive maps preserving a certain $G$-invariant functional) are covariant with respect to the $G$-actions. We motivate our definitions by applications to zero-error quantum communication theory with a symmetry constraint. Some key results are the following: 1) We give a necessary and sufficient condition for a covariant quantum relation to be the underlying relation of a covariant channel. 2) We show that every quantum confusability graph with a $G$-action (which we call a quantum $G$-graph) arises as the confusability graph of a covariant channel. 3) We show that a covariant channel is reversible precisely when its confusability $G$-graph is discrete. 4) When $G$ is quasitriangular (this includes all compact groups), we show that covariant zero-error source-channel coding schemes are classified by covariant homomorphisms between confusability $G$-graphs.
\end{abstract}

\section{Introduction}

\paragraph{Group symmetry in quantum information.}

Group symmetry constraints arise in quantum information theory for a variety of reasons. It is well-known that reference frame uncertainty~\cite{Bartlett2007} and superselection rules in the physical theory~\cite{Weinberg1995} induce group symmetry constraints. However, there are many other places in quantum information theory where group symmetry constraints appear naturally. It is common to consider constraints on states arising from a group action; for instance, the symmetric subspace arising from the action of $S_n$ on the Hilbert space of $n$ qudits~\cite{Harrow2013}. More generally, it is also common to consider constraints on channels arising from a group action on the source and target system (see~\cite{Datta2017} and references therein). Channels constrained in this way are called \emph{covariant}. More recently, compact quantum group symmetry has appeared in quantum information theory, for example in the classification of entanglement-assisted strategies for nonlocal games~\cite{Musto2019} and the construction of new families of quantum channels~\cite{Brannan2020}.

Because group symmetry constraints are so frequently considered in quantum information, it is useful to develop a theory which is itself \emph{covariant} (a.k.a. \emph{equivariant}), in the sense that the constructions within it are compatible with covariance constraints arising from the action of some symmetry group. It turns out that the project of categorical quantum mechanics initiated in~\cite{Abramsky2004}, in particular as developed in~\cite{Heunen2019,Verdon2021}, is an extremely useful tool for formulating such a theory. This is because the important structure of the category of finite-dimensional Hilbert spaces and linear maps is that it is a \emph{rigid $C^*$-tensor category}. If the constructions relevant to the theory can be formulated purely in terms of the categorical structure, then, since the categories of finite-dimensional (f.d.) continuous unitary representations of compact quantum groups (which include all ordinary compact groups) are also rigid $C^*$-tensor categories, covariance is immediate.

In this work we demonstrate this approach to covariance by formulating the  theory of quantum relations and quantum graphs using purely categorical methods. This theory, which is the combinatorial/possibilistic theory underlying the analytic/probabilistic theory of quantum mechanics, is relevant to zero-error quantum communication~\cite{Duan2012,Stahlke2015}, just as the theory of ordinary relations and graphs (which is subsumed by the noncommutative/quantum theory) is relevant to zero-error classical communication~\cite{Shannon1956}.

\paragraph{Quantum relations and quantum graphs.} We will now define the main mathematical objects of our study, in language hopefully accessible to the working quantum information theorist. In non-covariant finite-dimensional (f.d.) quantum theory, systems are f.d.\ $C^*$-algebras equipped with a choice of faithful positive linear functional, and channels are completely positive functional-preserving linear maps.

\begin{itemize}
\item A \emph{finite quantum set} is a finite set of f.d.\ Hilbert spaces~\cite[Def. 2.1]{Kornell2020}. It is well-known that every f.d.\ $C^*$-algebra is of the form $\bigoplus_{i =1}^{n} B(H_i)$ for some $n \in \mathbb{N}$; that is, every f.d.\ $C^*$-algebra is a direct sum of matrix algebras. Every f.d.\ $C^*$-algebra may therefore be associated with a finite quantum set, namely the set of Hilbert spaces corresponding to its factors.

\item Let $A,B$ be two f.d.\ $C^*$-algebras associated to quantum sets $S_A := \{H_1,\dots,H_m\}$ and $S_B:= \{K_1,\dots,K_n\}$ respectively. A \emph{quantum relation} $A \to B$ is a choice of subspace $L_{ij} \subseteq B(H_i,K_j)$ for all pairs $H_{i},K_j$. 

\item A \emph{quantum graph} on an f.d.\ $C^*$-algebra $A$ is a quantum relation $R: A \to A$ which is \emph{symmetric}: this is to say that $L_{ij} = L_{ji}^{\dagger}$. 
\end{itemize}
It is easy to check that quantum relations and quantum graphs reduce to ordinary relations and graphs when the $C^*$-algebras are commutative (i.e. when all factors are 1-dimensional).

\paragraph{Covariant quantum relations and quantum $G$-graphs.} In the covariant setting, some group $G$ is fixed. The most general kind of groups we will consider in this paper are compact quantum groups, but for accessibility we will restrict ourselves to ordinary compact groups $G$ for now. 

Systems are f.d.\ $C^*$-algebras equipped with some \emph{action} of the compact group $G$ (we call these \emph{$G$-$C^*$-algebras}, but they are also known as \emph{$C^*$-dynamical systems}). An action of $G$ on an f.d.\ $C^*$-algebra $A$ is a continuous homomorphism $\alpha: G \to \Aut(A)$. We will write $\alpha_g := \alpha(g)$. Every system $A$ is equipped with a faithful positive linear functional $\phi: A \to \mathbb{C}$ which is  \emph{$G$-invariant}; that is, it satisfies $\phi(x) = \phi(\alpha_g(x))$ for all $x \in A, g \in G$. In this work we will make a canonical choice of such functional for every system, which we call the \emph{separable standard} functional (Appendix~\ref{app:tworepdef}); however, as we explain in Remark~\ref{rem:functionalchoice}, the choice of $G$-invariant functional does not make a substantial difference to the overall theory. All completely positive maps $f: A \to B$ must be \emph{covariant}; that is to say that $f(\alpha_{A,g}(x)) = \alpha_{B,g}( f(x))$ for all $x \in A, g \in G$. \emph{Channels} are completely positive maps preserving the chosen functional, in the sense that $\phi_B(f(x)) = \phi_A(x)$.
\ignore{
Recall that an automorphism of an f.d.\ $C^*$-algebra $\bigoplus_{i =1}^{n} B(H_i)$ is specified by a pair $(\sigma,\{U_1,\dots,U_n\})$, where $\sigma \in S_n$ is a permutation which exchanges factors of equal dimension, and $U_i \in B(H_i)$ is a unitary operator acting on the factor $H_i$. With this notation, we can straightforwardly define covariance for quantum relations:
\begin{itemize}
\item Let $A,B$ be $G$-$C^*$-algebras. For any $g \in G$, let $$\alpha_{A,g} = (\sigma_{A,g},\{U_{A,1,g},\dots,U_{A,m,g}\}),~~~~~~~~~~~\alpha_{B,g} = (\sigma_{B,g},\{U_{B,1,g},\dots,U_{B,n,g}\})$$ be the corresponding automorphisms of $A$ and $B$. A \emph{covariant quantum relation} $A \to B$ is a quantum relation $A \to B$ satisfying:
$$
L_{\sigma_{A,g}(i),\sigma_{B,g}(j)} = U_{B,j,g} \circ L_{ij}  \circ U_{A,i,g}^{\dagger}~~~~~~\forall ~(i,j) \in\{1,\dots,m\} \times \{1,\dots,n\},~ g \in G
$$ 
\item A \emph{quantum $G$-graph} on an f.d.\ $G$-$C^*$-algebra $A$ is a covariant, symmetric quantum relation $A \to A$.
\end{itemize}
}

Using the definitions already given it is straightforward to define covariance for quantum relations; since we do not want to overburden the reader with definitions at this point we will omit the details. We will call a covariant quantum graph a \emph{quantum graph with a $G$-action}, or more succinctly a \emph{quantum $G$-graph}. These definitions reduce to the classical notions of covariant relations and graphs with a $G$-action when the $C^*$-algebras are commutative.

\paragraph{Zero-error communication.} Having defined the mathematical objects which are the subject of this paper, we can now highlight their physical significance. Recall that a quantum channel $f: A=  \bigoplus_{i =1}^m B(H_i) \to \bigoplus_{j =1}^n B(K_j)= B$ is defined by a set of \emph{Kraus maps} $\{f_{ijk}: H_i \to K_j\}_{k \in R_{ij}}$ for each pair of factors; here $\{R_{ij}\}$ are index sets for the Kraus maps.

\begin{definition*}
Let $f: A=  \bigoplus_{i =1}^m B(H_i) \to \bigoplus_{j =1}^n B(K_j)= B$ be a covariant channel. Let $\mathfrak{R}(f)$ be the covariant quantum relation obtained by setting $L_{ij} := \mathrm{span}\{f_{ijk}\}_{k \in R_{ij}}$. We say that $\mathfrak{R}(f)$ is the \emph{quantum relation underlying the channel}. 
\end{definition*}
\noindent
One can consider a quantum channel to be a stochastic mixture of its Kraus maps; then the underlying quantum relation encodes the possibilistic structure of the channel, i.e. what inputs can possibly get mapped to which outputs. This information is what matters for zero-error communication.

We will now define a second quantum combinatorial structure from a covariant channel. Observe that for any covariant quantum relation $L: A \to B$ there is a \emph{converse} covariant quantum relation $L^{\dagger}: B \to A$ defined by 
$$(L^{\dagger})_{ji} := \{f^{\dagger}~|~f \in L_{ij}\} \subseteq B(K_j,H_i).$$
Moreover, we can compose two covariant quantum relations $L: A \to B$ and $R: B \to C$ by
$$(R \circ L)_{ik} := \textrm{span}\{g \circ f~|~ j \in J, g \in R_{jk},f \in L_{ij}\}$$

\begin{definition*}
Let $f: A \to B$ be a covariant channel and let $\Gamma_f$ be the quantum $G$-graph on $A$ defined by  $\Gamma_f:= \mathfrak{R}(f)^{\dagger} \circ \mathfrak{R}(f).$ We say that $\Gamma_f$ is the \emph{confusability graph of the channel $f$}.
\end{definition*}
\noindent
When the $C^*$-algebras $A,B$ are matrix algebras, we recover the noncommutative confusability graphs of~\cite[\S{}2]{Duan2012}; when these algebras are commutative, we recover the confusability graphs of~\cite[Fig. 3]{Shannon1956}.

\subsection{Our results}

We now state our results. In what follows, let $G$ be any compact quantum group. 

\paragraph{Quantum relations.}
We saw above that one can take the converse of a covariant quantum relation, and compose covariant quantum relations. With this structure, covariant quantum relations between $G$-$C^*$-algebras form a dagger category $\QRel(G)$. Let $\CP(G)$ be the dagger category whose objects are $G$-$C^*$-algebras and whose morphisms are covariant completely positive (CP) maps.
\begin{proposition*}[Prop~\ref{prop:relfct}]
Taking the underlying quantum relation of a covariant CP map defines a full unitary (i.e. dagger-preserving) functor $\mathfrak{R}: \CP(G) \to \QRel(G)$.
\end{proposition*}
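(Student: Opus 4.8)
The plan is to work throughout in the Kraus picture, where a covariant CP map $f\colon A=\bigoplus_i B(H_i)\to\bigoplus_j B(K_j)=B$ is presented by Kraus maps $\{f_{ijk}\colon H_i\to K_j\}$ and $\mathfrak{R}(f)_{ij}=\mathrm{span}\{f_{ijk}\}$. The first thing I would verify is that $\mathfrak{R}$ is well defined on morphisms: two Kraus families representing the same $f$ differ by a unitary mixing of the Kraus index $k$, which acts on each block $B(H_i,K_j)$ by an invertible linear recombination and therefore leaves each span $L_{ij}$ unchanged; by hypothesis the resulting relation is covariant, so $\mathfrak{R}(f)\in\QRel(G)$. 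Functoriality is then immediate from the multiplicativity of Kraus families: the identity $\id_A$ has only the block identities $\id_{H_i}$ as Kraus maps, so $\mathfrak{R}(\id_A)_{ij}=\delta_{ij}\,\CC\,\id_{H_i}$, which is the unit relation; and if $h\colon B\to C$ has Kraus maps $\{h_{j\ell m}\}$, then $\{h_{j\ell m}\circ f_{ijk}\}$ is a Kraus family for $h\circ f$, so taking spans and using bilinearity of composition gives $\mathfrak{R}(h\circ f)_{i\ell}=(\mathfrak{R}(h)\circ\mathfrak{R}(f))_{i\ell}$, which is exactly the definition of relation composition.

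Dagger-preservation is similar. The dagger of a covariant CP map acts on Kraus maps by $f_{ijk}\mapsto f_{ijk}^{\dagger}$, up to invertible block factors coming from the chosen functional; since that functional is a multiple of the trace on each matrix factor, these factors are nonzero scalars and rescale each Kraus map without altering its span. Hence $\mathfrak{R}(f^{\dagger})_{ji}=\mathrm{span}\{f_{ijk}^{\dagger}\}=(\mathfrak{R}(f)^{\dagger})_{ji}$, so $\mathfrak{R}$ is unitary.

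The substantive part is fullness. Given a covariant quantum relation $L\colon A\to B$, I would choose for each pair $(i,j)$ a Hilbert--Schmidt orthonormal basis $\{f_{ijk}\}_k$ of $L_{ij}$ and define $f$ block by block through $f(x)_j:=\sum_{i,k} f_{ijk}\,x_i\,f_{ijk}^{\dagger}$. This is manifestly completely positive (and nothing more is needed, since the morphisms of $\CP(G)$ are arbitrary covariant CP maps), and $\mathfrak{R}(f)=L$ by construction, so fullness reduces entirely to checking that $f$ is covariant. The key observation is that the twirl $\sum_k g_k(\cdot)g_k^{\dagger}$ over an orthonormal basis $\{g_k\}$ of a subspace depends only on that subspace and not on the basis, since a change of orthonormal basis is a unitary recombination of the $g_k$ that cancels in the sum. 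Writing each action $\alpha_{A,g}$ as a permutation $\sigma_{A,g}$ of the factors together with unitaries $U_{A,i,g}$ (and likewise for $B$), covariance of $L$ states that $L_{\sigma_{A,g}(i),\sigma_{B,g}(j)}=U_{B,j,g}\,L_{ij}\,U_{A,i,g}^{\dagger}$; and conjugation by unitaries is a Hilbert--Schmidt isometry, so $\{U_{B,j,g}f_{ijk}U_{A,i,g}^{\dagger}\}_k$ is an orthonormal basis of $L_{\sigma_{A,g}(i),\sigma_{B,g}(j)}$. Feeding this into the basis-free twirl and comparing $f(\alpha_{A,g}(x))$ with $\alpha_{B,g}(f(x))$ factor by factor then produces exactly the covariance identity.

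I expect this covariance check to be the main obstacle, since it is the only place where the argument cannot be reduced to a formal manipulation of Kraus maps: one must ensure that the CP map reconstructed from the subspaces $L_{ij}$ genuinely lies in $\CP(G)$. The clean route is precisely the basis-independence of the twirl, which transfers the $G$-equivariance encoded in the family $\{L_{ij}\}$ directly to $f$ without any explicit averaging over $G$; any approach that commits to fixed Kraus maps before exploiting covariance is likely to get stuck on basis-dependence.
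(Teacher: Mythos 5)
Your argument has a genuine gap: it proves the proposition only for ordinary compact groups, whereas the statement (and the categories $\CP(G)$, $\QRel(G)$ as the paper defines them) is for an arbitrary compact quantum group $G$. Every step of your proof leans on structures that simply do not exist in the quantum group case. You write the action as $\alpha_{A,g} = (\sigma_{A,g}, \{U_{A,i,g}\})$ and verify covariance ``for each $g \in G$''; for a genuine compact quantum group there are no group elements to evaluate at --- covariance is the coaction-intertwining condition $\delta_B \circ f = (f \otimes \id_{C(G)}) \circ \delta_A$ --- so your key fullness argument (conjugating a Hilbert--Schmidt orthonormal basis of $L_{ij}$ by the unitaries $U_{B,j,g}(\cdot)U_{A,i,g}^{\dagger}$ and invoking basis-independence of the twirl) cannot even be formulated. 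The dagger step has the same defect in sharper form: you claim the invariant functional ``is a multiple of the trace on each matrix factor,'' so that adjoints only rescale Kraus maps. That is true for ordinary compact groups (and Kac-type quantum groups), but the separable standard functional of a general $G$-$C^*$-algebra is non-tracial; the adjoint of $x \mapsto MxM^{\dagger}$ with respect to the induced inner products then conjugates $M^{\dagger}$ by non-scalar positive operators (the Radon--Nikodym densities of the functional), which genuinely moves the subspaces $L_{ij}^{\dagger}$, so your identification $\mathfrak{R}(f^{\dagger})_{ji} = \mathrm{span}\{f_{ijk}^{\dagger}\}$ breaks down.

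This is precisely the issue the paper's proof is engineered to avoid: by defining $\mathfrak{R}(f)$ as the support projection $s(\widetilde f)$ of the Choi element in $\End(Y^* \otimes X)$ and working entirely inside $\Rep(G)$, covariance never has to be checked --- every element of $\End(Y^* \otimes X)$ is already a $G$-morphism, so fullness is the one-line observation that a projection is positive and is its own support. The price is that functoriality, which in your Kraus picture is a triviality of bilinearity of spans, becomes the hard step: the paper must prove $s(s(\widetilde g) \odot s(\widetilde f)) = s(\widetilde g \odot \widetilde f)$ abstractly, via the annihilator lemma~\eqref{eq:positivesumkernel} and spectral decompositions. So the trade-off runs exactly opposite to what you expected: your ``main obstacle'' (covariance in fullness) is free in the paper's framework, while your ``immediate'' step (functoriality) is where the paper's real work lies. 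To repair your proof you would either have to restrict the claim to ordinary compact groups, or replace all group-element manipulations by coaction identities and redo the dagger computation with the modular data of the separable standard functional --- at which point you would effectively be reconstructing the paper's categorical argument.
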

\noindent
Although every covariant quantum relation is the underlying relation of a covariant CP map, it is not necessarily the underlying relation of a covariant channel, since not all CP maps preserve the separable standard functional. By the covariant Choi's theorem~\cite[Thm. 4.13]{Verdon2021}, covariant quantum relations $A \to B$ correspond bijectively to projections in a certain $C^*$-algebra $\underline{\mathrm{Hom}}(A,B)$.
\begin{proposition*}[Prop.~\ref{prop:relchancond}]
A covariant quantum relation is the underlying relation of a covariant channel iff the partial trace of its associated projection $\pi \in \underline{\mathrm{Hom}}(A,B)$ is invertible.
\end{proposition*}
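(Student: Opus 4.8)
The plan is to reason entirely through the covariant Choi correspondence. Under it, covariant CP maps $A \to B$ are the positive elements of $\underline{\mathrm{Hom}}(A,B)$, covariant quantum relations are the projections, and (by the functoriality of $\mathfrak{R}$ established in the preceding proposition) the underlying relation $\mathfrak{R}(f)$ of a CP map is exactly the support projection of its associated positive element. Translating the channel condition $\phi_B \circ f = \phi_A$ through this dictionary, $f$ is functional-preserving precisely when the partial trace $\Tr_B(X)$ of its positive element $X$ equals a fixed invertible positive element $e_A$ (the one associated to the identity channel under the separable standard functional). So the proposition reduces to the following: a projection $\pi$ is the support of some positive $X$ with $\Tr_B(X) = e_A$ if and only if $\Tr_B(\pi)$ is invertible.

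Necessity is the easy direction. Suppose a covariant channel $f$ realizes the given relation, with positive element $X$ satisfying $\mathrm{supp}(X) = \pi$ and $\Tr_B(X) = e_A$. Having support equal to $\pi$ forces $X \le \|X\|\,\pi$ in $\underline{\mathrm{Hom}}(A,B)$. Since the partial trace $\Tr_B$ is a completely positive, hence order-preserving, linear map, applying it gives $e_A = \Tr_B(X) \le \|X\|\,\Tr_B(\pi)$. As $e_A$ is invertible (strictly positive), we get $\Tr_B(\pi) \ge \|X\|^{-1} e_A$, which is bounded below by a positive multiple of the unit; in a finite-dimensional $C^*$-algebra this makes $\Tr_B(\pi)$ invertible.

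For sufficiency, assume $p := \Tr_B(\pi)$ is invertible and seek a positive $X$ with $\mathrm{supp}(X) = \pi$ and $\Tr_B(X) = e_A$. The natural attempt is to correct $\pi$ using the input-system module action on $\underline{\mathrm{Hom}}(A,B)$, together with the fact that $\Tr_B$ is a module map, $\Tr_B(c \cdot Y \cdot c^\dagger) = c\,\Tr_B(Y)\,c^\dagger$; taking $c$ built from $e_A^{1/2} p^{-1/2}$ yields $\Tr_B(c\,\pi\,c^\dagger) = e_A$ on the nose, and positivity is automatic. The main obstacle — and the place where the precise structure of the separable standard functional must be used — is that such an input-side correction can push the support off $\pi$, thereby altering the underlying relation. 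The delicate step is thus to realize the correction \emph{inside} the corner $\pi\,\underline{\mathrm{Hom}}(A,B)\,\pi$, i.e.\ to show that $e_A$ lies in the image of the positive cone of this corner under $\Tr_B$, so that $\mathrm{supp}(X) = \pi$ is preserved. This surjectivity onto the relevant positive cone fails for a generic positive map, so I expect it to be the crux of the argument, and to rely essentially on the compatibility between $\Tr_B$ and the canonical element $e_A$ that is built into the separable standard normalization; once it is in hand, arranging $X$ to have full support $\pi$ (rather than merely support below $\pi$) is a routine perturbation within the corner.
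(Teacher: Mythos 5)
Your necessity direction is correct and complete, and it is in substance the paper's own argument (the paper runs it through supports and functoriality of $\mathfrak{R}$; your inequality $X \le \|X\|\,\pi$ pushed through the positive map $\Tr_B$ is a cleaner route to the same conclusion). The gap is the sufficiency direction, and you know it: you reduce the problem to the ``crux'' that $e_A$ lies in the image under $\Tr_B$ of the positive cone of the corner $\pi\,\underline{\mathrm{Hom}}(A,B)\,\pi$, and then you leave that claim as an expectation rather than proving it, so as written the proposal does not establish the proposition. Your closing remark --- that once the crux is granted, upgrading ``support $\le \pi$'' to ``support $= \pi$'' is a routine perturbation --- is also not routine: after mixing with $\epsilon\pi$ one must re-correct the partial trace, and the only available correction is again an input-side conjugation, which is exactly the operation you had just (rightly) flagged as capable of moving the support off $\pi$.

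What you should know, however, is that the step you refused to wave through is precisely the step the paper waves through, and your suspicion is vindicated: the crux is \emph{false} in general, so the gap cannot be filled by you or by the paper. Take $G$ trivial, $A = B = M_2$, and let $\widetilde{p}$ be the rank-one projection onto $\alpha|00\rangle + \beta|11\rangle$ with $\alpha,\beta>0$, $\alpha \ne \beta$; under the identification of Example~\ref{ex:matrixqrel} this is the relation given by the line $\mathbb{C}K \subseteq B(Y,X)$, $K = \mathrm{diag}(\alpha,\beta)$. Its partial trace is $\mathrm{diag}(\alpha^2,\beta^2)$, which is invertible; yet every positive element with support exactly $\widetilde{p}$ is a multiple $\lambda\widetilde{p}$, whose partial trace $\lambda\,\mathrm{diag}(\alpha^2,\beta^2)$ is never proportional to $\id$ --- equivalently, a channel whose Kraus operators all lie on the ray $\mathbb{C}K^{\dagger}$ would need $KK^{\dagger} \propto \id$, which fails. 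So no channel has underlying relation $p$ although \eqref{eq:partialp} is invertible, and the ``if'' direction of the proposition fails as stated. Correspondingly, the paper's own construction --- correct $\widetilde{p}$ on the input leg by an invertible built from $t$, with the justification ``since this is conjugation of $\widetilde{p}$ by an invertible 2-morphism, its support is precisely $\widetilde{p}$ again'' --- is broken exactly at that justification: conjugation by an invertible preserves a support projection only when the invertible maps the range of the projection onto itself, and on the example above the conjugated element is supported on $|00\rangle+|11\rangle$ (the identity channel's relation), not on $\widetilde{p}$. The condition you isolated, that $e_A$ lie in (the relative interior of) $\Tr_B$ of the positive cone of the corner, is the correct characterisation, and the example shows it is genuinely stronger than invertibility of $\Tr_B(\pi)$; so the statement itself, not merely its proof, needs repair.
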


\paragraph{Quantum graphs.}
We say that a $G$-graph is \emph{simple} if its associated projection is orthogonal to a certain fixed projection. We say that the complement of a simple $G$-graph is a \emph{confusability $G$-graph}. The confusability graph of any covariant channel (defined above) is a confusability $G$-graph; the following proposition then justifies the name, and answers a question raised by Daws~\cite[P.26]{Daws2022}.
\begin{proposition*}[Prop.~\ref{prop:graphtochan}]
Every confusability $G$-graph is the confusability graph of a covariant channel. 
\end{proposition*}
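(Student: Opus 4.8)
The plan is to reduce the statement, via Proposition~\ref{prop:relchancond}, to a covariant factorization problem for the operator system underlying $\Gamma$. First I would record the two structural features of a confusability $G$-graph $\Gamma$ on $A = \bigoplus_i B(H_i)$. By definition it is symmetric, $\Gamma_{ii'} = \Gamma_{i'i}^\dagger$; and being the complement of a simple graph — whose projection is orthogonal to the fixed ``diagonal'' projection — its own projection dominates that diagonal projection, which concretely means $\Gamma$ is \emph{reflexive}, i.e.\ $I_{H_i} \in \Gamma_{ii}$ for every $i$. The goal is to produce a covariant channel $f : A \to B$ with $\mathfrak{R}(f)^\dagger \circ \mathfrak{R}(f) = \Gamma$. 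By the definition of $\Gamma_f$ it suffices to construct a single covariant quantum relation $L : A \to B$ such that (i) $L^\dagger \circ L = \Gamma$, and (ii) the partial trace of the projection $\pi_L \in \underline{\mathrm{Hom}}(A,B)$ associated to $L$ is invertible: then Proposition~\ref{prop:relchancond} furnishes a covariant channel $f$ with $\mathfrak{R}(f) = L$, whence $\Gamma_f = L^\dagger \circ L = \Gamma$.

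For the factorization I would bundle $\Gamma$ into the operator system $\mathcal{S} := \bigoplus_{i,i'} \Gamma_{ii'} \subseteq B(\mathcal{H})$, where $\mathcal{H} = \bigoplus_i H_i$ carries the standard unitary $G$-representation $u$. Symmetry makes $\mathcal{S}$ self-adjoint, reflexivity gives $I_{\mathcal{H}} \in \mathcal{S}$, and covariance makes $\mathcal{S}$ a $G$-invariant subspace. Unwinding the composition formula for $L^\dagger \circ L$, giving a covariant $L$ with $L^\dagger \circ L = \Gamma$ amounts to giving a $G$-invariant ``Kraus subspace'' $\mathcal{K} \subseteq B(\mathcal{H}, \mathcal{H} \otimes W)$ with $\mathcal{K}^\dagger \mathcal{K} = \mathcal{S}$, the blocks of $\mathcal{K}$ recovering the Kraus maps of $f$ and the invertibility in (ii) corresponding to $\mathcal{K}$ being total.

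Ignoring covariance for a moment, such a $\mathcal{K}$ always exists. Since $\mathcal{S}$ is an operator system it is spanned by its positive elements, so one may choose positive $P_0 = \varepsilon I, P_1, \dots, P_N \in \mathcal{S}$ spanning $\mathcal{S}$ (reflexivity is what lets us throw in the term $P_0 = \varepsilon I$) and set $\mathcal{K}_0 := \mathrm{span}\{\sqrt{P_a} \otimes |a\rangle\}$ with $W = \CC^{N+1}$. The orthogonality of the ancilla labels $|a\rangle$ annihilates all cross terms, so that $\mathcal{K}_0^\dagger \mathcal{K}_0 = \mathrm{span}\{P_a\} = \mathcal{S}$ exactly — rather than the strictly larger product span $\mathcal{S}^2$ that a naive square root would produce — and $\sum_a P_a \ge \varepsilon I$ is invertible, securing (ii). This is essentially the Duan--Severini--Winter realization of an operator system as a channel confusability system.

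The main obstacle is covariance: the arbitrary choice of spanning positives, and hence of $\mathcal{K}_0$, need not be $G$-equivariant, and producing a genuinely $G$-invariant Kraus subspace is where the real content lies. Here I would exploit that $\mathcal{S}$ is already $G$-invariant. Starting from $\mathcal{K}_0$, I would symmetrize by twirling, replacing $\mathcal{K}_0$ by the $G$-invariant Kraus subspace it generates (enlarging the ancilla $W$ to carry the finitely many isotypic components of $G$ occurring in $\mathcal{S}$, so that everything stays finite-dimensional). Because the labels keep the cross terms orthogonal, the resulting product span is $\mathrm{span}\{u_g P_a u_g^\dagger : g \in G,\, a\}$, which equals $\mathcal{S}$ precisely because $\mathcal{S}$ is $G$-invariant and the $P_a$ already span it. Equivalently, and more in the categorical spirit advocated in the introduction, the entire factorization can be phrased through the $C^*$-Frobenius structure of $A$ and the $G$-equivariant inclusion $\mathcal{S} \hookrightarrow B(\mathcal{H})$, so that covariance is automatic. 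Either way the delicate point is to keep the product span equal to $\mathcal{S}$ exactly, not the larger $\mathcal{S}^2$, while respecting the $G$-action; reflexivity then guarantees invertibility of the partial trace in (ii), and Proposition~\ref{prop:relchancond} completes the argument.
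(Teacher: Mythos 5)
Your reduction of the proposition to Proposition~\ref{prop:relchancond} is sound (given a covariant relation $L$ with $L^{\dagger}\circ L = \Gamma$ and invertible partial trace, functoriality of $\mathfrak{R}$ indeed gives $\Gamma_f = L^{\dagger}\circ L = \Gamma$), and your non-covariant factorization is the correct Duan--Severini--Winter argument. The gap is the covariance step, which in this setting is the entire content of the proposition, and your twirl does not do what you claim. Twirling $\mathcal{K}_0 = \mathrm{span}\{\sqrt{P_a}\otimes\ket{a}\}$ produces $\mathcal{K}_1 = \mathrm{span}\{u_g\sqrt{P_a}u_g^{\dagger}\otimes\ket{a} : g\in G,\, a\}$, and the ancilla labels only annihilate cross terms between \emph{different} labels $a\neq b$; they do nothing to separate distinct group translates of the \emph{same} label. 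Hence $\mathcal{K}_1^{\dagger}\mathcal{K}_1$ contains every product $u_g\sqrt{P_a}u_g^{\dagger}\,u_h\sqrt{P_a}u_h^{\dagger}$ with $g\neq h$; these are products of two \emph{distinct} positive elements of $\mathcal{S}$ and in general lie outside $\mathcal{S}$ --- precisely the ``$\mathcal{S}^2$ inflation'' your labels were introduced to prevent. Repairing this by also tagging the group element (Kraus elements $u_g\sqrt{P_a}u_g^{\dagger}\otimes\ket{a}\otimes\ket{g}$) requires the ancilla to carry the regular representation, which for a compact group is infinite-dimensional; restricting to ``the isotypic components occurring in $\mathcal{S}$'' does not help, because the orthogonality needed is between group elements, not between isotypic components. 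Two further structural problems: the statement is for compact \emph{quantum} groups, where there are no group elements $u_g$ to twirl over at all; and even for ordinary compact groups the action on $B(\mathcal{H})$ is in general implemented only by a \emph{projective} unitary representation (e.g.\ $SO(3)$ acting on $M_2(\mathbb{C})$ via $SU(2)$), so the ``standard unitary $G$-representation $u$'' your operator-system picture starts from need not exist.

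The paper's proof avoids all of this by never leaving the category, so covariance is automatic rather than imposed afterwards. Since $\widetilde{\Delta_{X\otimes X^*}} \leq \widetilde{\Gamma}$, it sets $\widetilde{f} := \widetilde{\Delta_{X\otimes X^*}} + \tau(\widetilde{\Gamma}-\widetilde{\Delta_{X\otimes X^*}})$ for small $\tau>0$; this element is simultaneously positive with support exactly $\widetilde{\Gamma}$ and CP (it admits a dilation), and it satisfies the counit identity $\eta_X^{\dagger}\circ\widetilde{f} = \eta_X^{\dagger}$ by orthogonality of the two summands. Dilating $\widetilde{f}$ yields an environment $E$ in $\Rep(G)$, and the channel is built with target the pair of pants $E^*\otimes E$; the channel property is verified directly from the counit identity (no appeal to Proposition~\ref{prop:relchancond} is made), and a support computation shows the confusability graph is $\Gamma$. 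This is also why the paper's target system can be noncommutative even when $\Gamma$ is classical. Your instinct that a finite-dimensional covariant Kraus subspace exists is correct --- the paper's environment $E$ is finite-dimensional --- but it must be constructed covariantly from the start, via the $G$-invariant perturbation $\widetilde{f}$ and its dilation, not obtained by symmetrizing a non-covariant one.
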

\noindent
We remark that even when the confusability $G$-graph is classical (i.e. the $G$-$C^*$-algebra is commutative), the covariant channel constructed in the proof of this proposition may have a noncommutative $G$-$C^*$-algebra as its target. We do not know whether the proposition would still hold if one were only to consider classical channels. 

\paragraph{Reversibility of channels.} It is very useful to know whether a covariant channel $f: A \to B$ is reversible; that is, whether there exists a covariant channel $g: B \to A$ such that $g \circ f = \id_{A}$. The following necessary and sufficient condition provides an agreeably simple and general (but finite-dimensional) complement to previous answers in the non-covariant setting~\cite{Nayak2007,Jencova2012,Shirokov2013}.
\begin{theorem*}[Thm.~\ref{thm:reversal}]
A covariant channel $f: A \to B$ is reversible iff its confusability $G$-graph is discrete. 
\end{theorem*}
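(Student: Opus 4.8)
The plan is to prove both directions by translating "reversible" and "discrete confusability graph" into conditions on quantum relations, then use the categorical structure already established. The key conceptual point is that zero-error recoverability of the channel's information should be equivalent to the inputs being perfectly distinguishable at the output, which is exactly what a discrete confusability graph encodes. Let me set up the two implications.

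For the forward direction, suppose $f: A \to B$ is reversible, so there is a covariant channel $g: B \to A$ with $g \circ f = \id_A$. First I would apply the functor $\mathfrak{R}$ from Prop.~\ref{prop:relfct}: since $\mathfrak{R}$ is a functor, $\mathfrak{R}(g) \circ \mathfrak{R}(f) = \mathfrak{R}(g \circ f) = \mathfrak{R}(\id_A)$. The underlying relation of the identity channel should be the discrete relation on $A$ (the one whose only nonzero components $L_{ii}$ are spanned by scalar multiples of the identity, with $L_{ij} = 0$ for $i \neq j$). I would then argue that $\Gamma_f = \mathfrak{R}(f)^\dagger \circ \mathfrak{R}(f)$ is contained in the composite $\mathfrak{R}(g) \circ \mathfrak{R}(f)$ once we identify $\mathfrak{R}(g)$ with $\mathfrak{R}(f)^\dagger$. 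The natural mechanism here is that for a channel $g$ reversing $f$, the Kraus maps of $g$ must be supported on the adjoints of the Kraus maps of $f$ — this is where I expect to need the functional-preservation and the structure of the reversal. If $\mathfrak{R}(f)^\dagger \circ \mathfrak{R}(f)$ is contained in the discrete relation, then since it is also a $G$-graph containing the identity, it must equal the discrete graph.

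For the converse, suppose $\Gamma_f$ is discrete. The discreteness of $\mathfrak{R}(f)^\dagger \circ \mathfrak{R}(f)$ says exactly that Kraus maps landing in different factors of $A$ have orthogonal ranges after passing through $f$, i.e. the images $f(x)$ for inputs $x$ in distinct factors are perfectly distinguishable. The plan is to construct the reversal $g$ explicitly as a covariant channel built from a measurement that distinguishes these orthogonal output subspaces, followed by reconstruction. Concretely, discreteness should yield a direct-sum decomposition of $B$ adapted to the ranges of the $f_{iik}$, and I would define $g$ using the corresponding orthogonal projections together with the partial isometries inverting the Kraus maps on their supports. I would then verify that $g$ is completely positive, covariant (using that the decomposition is $G$-invariant because $\Gamma_f$ is a $G$-graph), and functional-preserving, and that $g \circ f = \id_A$.

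\textbf{The main obstacle} I anticipate is the converse construction: extracting an explicit covariant, functional-preserving reversal from the purely possibilistic data of a discrete confusability graph. Discreteness is a statement about spans of Kraus maps, so it only guarantees orthogonality of supports, not the invertibility needed to rebuild the input state; recovering a genuine channel will require showing that covariance plus preservation of the separable standard functional forces the Kraus maps of $f$ to be (scalar multiples of) isometries onto their images, so that a partial-isometry reversal is well-defined. I expect the cleanest route is to reduce the analytic content to the condition in Prop.~\ref{prop:relchancond} — verifying that the candidate relation $\mathfrak{R}(f)^\dagger$ underlies an actual covariant channel by checking the partial trace of its associated projection is invertible — which offloads most of the hard analysis onto the already-established criterion.
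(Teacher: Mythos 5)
Your forward direction is essentially sound, though the mechanism you lean on (identifying $\mathfrak{R}(g)$ with, or containing, $\mathfrak{R}(f)^{\dagger}$ for a reversal $g$) is left unproven and is more than you need: the paper gets the containment $\Gamma_f \leq \Gamma_{g\circ f} = \Delta_{A}$ directly from the postprocessing monotonicity (Lemma~\ref{lem:postprocessing}), whose only input is $\Delta_{B} \leq \mathfrak{R}(g^{\dagger}\circ g)$ --- which holds for \emph{any} channel $g$ by Proposition~\ref{prop:confgraphfromchannel}, with no analysis of how $g$ relates to $f$. The serious problems are in your converse.

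First, your ``cleanest route'' fails: Proposition~\ref{prop:relchancond} applied to the relation $\mathfrak{R}(f)^{\dagger}$ does \emph{not} verify in general, because the relevant partial trace of its projection is invertible only when $f$ is possibilistically surjective onto $B$. A classical counterexample: $f\colon \{*\} \to \{0,1\}$ deterministic with $f(*)=0$ has discrete confusability graph, but $\mathfrak{R}(f)^{\dagger}$ relates the output symbol $1$ to nothing, so \emph{no} channel whatsoever has $\mathfrak{R}(f)^{\dagger}$ as its underlying relation; any reversal's relation must strictly contain it. This is exactly why the paper's candidate $\widetilde{g}$ is a sum of three terms: one term is the transpose of $\widetilde{\mathfrak{R}(f)}$ (the partial-function part, restricted to the ``image'' projection~\eqref{eq:reversalprojdef}), and the remaining terms extend the partial function by a covariant completion on the orthogonal complement of that projection, precisely so that the channel criterion can be met. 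Your measurement-plus-partial-isometries sketch has the same hole: you never say what $g$ does on the part of $B$ outside the reachable subspace, and without that the map is not counit-preserving. Second, even once a covariant channel $g$ with a suitable underlying relation is in hand, matching of relations is purely possibilistic and cannot by itself force the \emph{equality of channels} $g \circ f = \id_A$; one only learns that $g\circ f$ is a channel whose underlying relation is discrete. Closing this requires the paper's Lemma~\ref{lem:idchandisc} --- the identity is the \emph{unique} channel whose underlying relation is the discrete confusability graph --- which is a genuine lemma (its proof conjugates $\widetilde{g\circ f}$ by its support and uses counit preservation to force the residual positive scalar to be $1$), and it appears nowhere in your proposal.
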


\paragraph{Covariant zero-error source-channel coding.} Quantum zero-error source-channel coding problems were defined in~\cite[\S{}4]{Stahlke2015}; a long list of problems which can be formulated in terms of zero-error source-channel coding was given in that work. 

We will now define a generalisation of zero-error source-channel coding to the covariant setting. This is nothing more than \emph{coherent QSCC} as defined in~\cite[Fig. 3]{Stahlke2015}, but where the systems under consideration are generalised from matrix algebras to $G$-$C^*$-algebras, and all channels involved must be covariant.  In order to define a tensor product of arbitrary systems we restrict to the case where the compact quantum group is quasitriangular.
\begin{problem}
Alice and Bob share a covariant communication channel $N: A \to B$, where $A$ and $B$ are systems. Charlie wants to send the state of a system $S$ to Bob. To do this, he will transmit information to Alice (a state of a system $O_A$) and some `side information' to Bob (a state of a system $O_B$). This transmission is defined by a covariant channel $C: S \to O_A \otimes O_B$. Alice must use the covariant channel $N$ to transmit information to Bob in order that Bob can recover the original state of the system $S$. 

Such a procedure is defined by an covariant encoding channel $E: O_A \to A$  and a covariant decoding channel $D: B \otimes O_B \to S$. If the data $(E,D)$ is a solution to the problem which will transmit any state of $S$ perfectly we call it a \emph{covariant zero-error source channel coding scheme}.
\end{problem} 
\noindent
A schematic outline is given in the following diagram:
\begin{align*}
\includegraphics[valign=c]{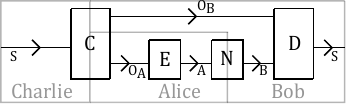}
\end{align*}
We here show that covariant zero-error source-channel coding schemes correspond to covariant homomorphisms of quantum $G$-graphs. For confusability $G$-graphs $\Gamma_A,\Gamma_B$ on systems $A,B$, we say that a covariant channel $f: A \to B$ is a \emph{homomorphism} if $\mathfrak{R}(f)^{\dagger} \circ \Gamma_B \circ \mathfrak{R}(f) \subseteq \Gamma_A$, where the inclusion indicates containment of the subspaces associated to pairs of factors. When $A,B$ are commutative this reduces to a `stochastic homomorphism' of ordinary graphs, as described in Example~\ref{ex:stochgraphhom}. (As we discuss in the subsequent remark, by restricting from channels to $*$-cohomomorphisms one can define a stronger notion of homomorphism which reduces to the usual notion for ordinary graphs, but this is too strong for applications to zero-error source-channel coding.)

Every covariant source $C: S \to O_A \otimes O_B$ is associated to a confusability $G$-graph on $O_A$, called the \emph{confusability $G$-graph of the source}.
\begin{theorem*}[Thm.~\ref{thm:scchoms}]
A covariant channel $E: O_A \to A$ is a valid encoding channel for a covariant zero-error source-channel coding scheme with covariant source $C: S \to O_A \otimes O_B$ and covariant communication channel $N: A \to B$ precisely when it is a covariant homomorphism from the confusability $G$-graph of the source to the confusability $G$-graph of the communication channel.
\end{theorem*}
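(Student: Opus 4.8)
# Proof Proposal for Theorem (Thm.~\ref{thm:scchoms})

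The plan is to unfold both sides of the claimed equivalence into a single categorical condition on the composite channel $N \circ E$, and then show that the zero-error recoverability of the coding scheme is exactly the statement that this composite sends confusable source-states to distinguishable channel-outputs. First I would fix the precise meaning of a \emph{valid} encoding channel: given the source $C: S \to O_A \otimes O_B$ and the communication channel $N: A \to B$, a valid scheme requires the existence of a covariant decoding channel $D: B \otimes O_B \to S$ with $D \circ (N \otimes \id_{O_B}) \circ (E \otimes \id_{O_B}) \circ C = \id_S$. The key observation is that zero-error recoverability is purely possibilistic, so it should depend only on the underlying quantum relations $\mathfrak{R}(C)$, $\mathfrak{R}(E)$, $\mathfrak{R}(N)$, $\mathfrak{R}(D)$, and by the functoriality of $\mathfrak{R}$ (Prop.~\ref{prop:relfct}) these compose and dagger-commute as expected.

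The central step is to translate ``$D$ exists with perfect recovery'' into a confusability-graph condition. The standard zero-error argument is: two inputs can be perfectly distinguished after transmission iff their output states have orthogonal supports, i.e. iff they are non-adjacent in the relevant confusability graph. Concretely, the confusability $G$-graph of the source is $\Gamma_S := \mathfrak{R}(C)^{\dagger} \circ \mathfrak{R}(C)$ restricted (via the partial trace over $O_B$) to an appropriate quantum $G$-graph on $O_A$; the confusability $G$-graph of the channel is $\Gamma_N := \mathfrak{R}(N)^{\dagger} \circ \mathfrak{R}(N)$ on $A$. A decoder $D$ exists precisely when the composite relation $\mathfrak{R}(N) \circ \mathfrak{R}(E)$ maps non-confusable source inputs to distinguishable outputs, and I would argue this is equivalent to the containment $\mathfrak{R}(E)^{\dagger} \circ \Gamma_N \circ \mathfrak{R}(E) \subseteq \Gamma_S$, which is exactly the definition of $E$ being a covariant homomorphism from the confusability $G$-graph of the source to that of the channel. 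The forward direction uses that if $E$ is a homomorphism then confusable-after-$E$-then-$N$ implies confusable-at-the-source, so the source data can be reconstructed; one then builds $D$ explicitly by choosing, covariantly, a measurement that separates the non-confusable classes, invoking Prop.~\ref{prop:graphtochan} to realise the required relation as an actual covariant channel. The reverse direction contraposes: a violation of the containment produces a pair of source states that become indistinguishable after transmission, obstructing any decoder.

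The main obstacle I anticipate is the covariance bookkeeping in constructing the decoder $D$ in the forward direction. Realising a graph homomorphism as an honest covariant channel — rather than merely a relation — requires upgrading the possibilistic data to a completely positive, functional-preserving, $G$-covariant map, and this is precisely where Prop.~\ref{prop:graphtochan} (every confusability $G$-graph arises from a covariant channel) and Prop.~\ref{prop:relchancond} (the partial-trace invertibility criterion) must be deployed carefully so that the constructed $D$ is genuinely a channel and not just a CP map. A secondary subtlety is the role of quasitriangularity: it is needed to make sense of the tensor products $O_A \otimes O_B$ and $B \otimes O_B$ of arbitrary $G$-$C^*$-algebras, so I must check that the braiding used in forming these tensor products is compatible with the functor $\mathfrak{R}$, i.e.\ that taking underlying relations commutes with the tensor structure. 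Once these two points are settled, the equivalence follows by chaining the relation-level identities, and the classical special case (Example~\ref{ex:stochgraphhom}) serves as a consistency check that the categorical condition reduces to the expected stochastic-homomorphism statement.
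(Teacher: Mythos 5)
Your proposal has the right overall shape (validity of $E$ should be equivalent to the containment $\mathfrak{R}(E^{\dagger}\circ\Gamma_N\circ E)\leq\Gamma_{\mathrm{source}}$, matching Definition~\ref{def:graphhoms}), but it has two genuine gaps, the first of which makes the statement you would actually prove false. Your definition of the confusability $G$-graph of the source is not the paper's and cannot be repaired by fiat. As written, $\mathfrak{R}(C)^{\dagger}\circ\mathfrak{R}(C)$ is a relation on $S$, not on $O_A$ --- and it is forced to equal $\Delta_{S}$, since $C$ must be reversible for any scheme to exist at all --- so ``partial tracing over $O_B$'' does not even typecheck; and the natural repair $\Tr_{O_B}\bigl[\mathfrak{R}(C)\circ\mathfrak{R}(C)^{\dagger}\bigr]$ is still wrong, because it omits the essential ingredient of Definition~\ref{def:confsourcegraph}: the projector $1-\widetilde{\Delta_{Z_S\otimes Z_S^*}}$ of the \emph{complete simple graph} on $S$, sandwiched between the source dilation and its conjugates inside the partial trace~\eqref{eq:ftilde}. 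That projector encodes that the two transmissions carry \emph{distinct} source messages; only such pairs of Alice-inputs need to remain distinguishable, and the source graph is the complement of the support of the resulting positive element. A classical example shows the omission is fatal: take trivial $G$ and let $C$ copy the source symbol to both Alice and Bob. Bob's side information already determines the message, so \emph{every} channel $E$ is a valid encoding; correspondingly the paper's source graph is complete and the homomorphism condition is vacuous. Your graph, by contrast, comes out discrete (only equal Alice-inputs co-occur with equal side information), so your homomorphism condition would force $N\circ E$ to be reversible --- wrongly excluding, for instance, constant encodings.

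Second, the step carrying all the weight --- ``a decoder $D$ exists precisely when the containment holds'' --- is asserted (``I would argue''), not derived, and the tools you name cannot derive it. The paper's route is: validity means $D$ is a left inverse of $((N\circ E)\otimes\id_{O_B})\circ C$; by Theorem~\ref{thm:reversal} such a $D$ exists iff the confusability graph of that composite is discrete; a chain of manipulations using positivity and faithfulness of the trace, the sliding equations, and $\Tr_{a\boxtimes b}=\Tr_a\circ\Tr_b$ (Lemma~\ref{lem:partialtrace}) then converts this discreteness into $\widetilde{Q}\leq\Ann(\widetilde{f})=\widetilde{\Gamma_C}$, where $Q$ is the confusability graph of $N\circ E$; functoriality of $\mathfrak{R}$ identifies this with the homomorphism condition. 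You never invoke Theorem~\ref{thm:reversal}, which is the linchpin: its proof is what manufactures the decoder in the forward direction. Proposition~\ref{prop:graphtochan} cannot substitute for it, since it constructs a channel \emph{having} a prescribed confusability graph, not a left inverse of a given channel; and Proposition~\ref{prop:relchancond} only certifies that a relation underlies \emph{some} channel, with no control ensuring $D\circ((N\circ E)\otimes\id_{O_B})\circ C=\id_S$. Likewise your reverse direction (``contrapose to get two source states that become indistinguishable'') is classical intuition; in the covariant quantum setting it must be replaced by the support/annihilator computation above. Your worry about compatibility of $\mathfrak{R}$ with the braiding is comparatively minor: what the argument actually needs from quasitriangularity is only the monoidal structure and the partial trace of Appendix~\ref{app:monoidal}.
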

\noindent
Conceptually, one might wonder whether this gives an operational semantics for covariant homomorphisms between confusability $G$-graphs. The following proposition and corollary answer this question in the affirmative. 
\begin{proposition*}[Prop.~\ref{prop:channelfromsourcegraph}]
Every confusability $G$-graph is the confusability $G$-graph of some covariant source. 
\end{proposition*}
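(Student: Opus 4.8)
The plan is to reduce the statement to Proposition~\ref{prop:graphtochan}, which already realises every confusability $G$-graph as the confusability graph $\mathfrak{R}(f)^\dagger \circ \mathfrak{R}(f)$ of a covariant channel. The key observation is that a source and a channel carry \emph{dual} confusability graphs: for a channel $f \colon A \to B$ the graph $\mathfrak{R}(f)^\dagger \circ \mathfrak{R}(f)$ lives on the domain, whereas unwinding the definition of the confusability $G$-graph of a source $C \colon S \to O_A \otimes O_B$, in the special case where the side-information system $O_B$ is trivial, yields the graph $\mathfrak{R}(C) \circ \mathfrak{R}(C)^\dagger$ on the target $O_A$. Thus to realise a given confusability $G$-graph $\Gamma$ on $O_A$ it suffices to produce a covariant channel $C \colon S \to O_A$ whose underlying relation $R := \mathfrak{R}(C)$ satisfies $R \circ R^\dagger = \Gamma$.

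Given $\Gamma$, I would first apply Proposition~\ref{prop:graphtochan} to obtain a covariant channel $f \colon O_A \to B$ with $\Gamma = \mathfrak{R}(f)^\dagger \circ \mathfrak{R}(f)$, and then rewrite this as $\Gamma = L \circ L^\dagger$ where $L := \mathfrak{R}(f)^\dagger \colon B \to O_A$ is the converse relation. Setting $S := B$, it then suffices to realise $L$ itself as the underlying relation of a covariant channel $C \colon B \to O_A$: for then $\mathfrak{R}(C) \circ \mathfrak{R}(C)^\dagger = L \circ L^\dagger = \Gamma$, as required. By Proposition~\ref{prop:relfct} the relation $L$ is certainly the underlying relation of \emph{some} covariant CP map, so the only remaining question is whether this CP map can be taken to preserve the separable standard functional; by Proposition~\ref{prop:relchancond} this holds precisely when the partial trace of the projection associated to $L$ is invertible.

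The main obstacle is exactly this invertibility condition, which need not hold for the channel $f$ produced by Proposition~\ref{prop:graphtochan}. I expect two complementary ways to force it. First, one has freedom in the choice of $f$: restricting the target $B$ to the sectors actually in the image of $f$ should make $L = \mathfrak{R}(f)^\dagger$ ``total'', i.e.\ render the relevant partial trace invertible --- this is the quantum analogue of the classical fact that the converse of the support relation of a \emph{surjective} channel is again the support of a channel. Second, and more robustly, one can keep the side-information system $O_B$ nontrivial and replace $L$ by a dilation $\tilde L \colon B \to O_A \otimes O_B$, realised by a covariant channel $\tilde C$, whose associated projection does have invertible partial trace (for instance a Stinespring-type completion of the CP map realising $L$). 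The technical heart of the argument is then to check that contracting the $O_B$-legs of $\mathfrak{R}(\tilde C) \circ \mathfrak{R}(\tilde C)^\dagger$ recovers exactly $\Gamma$ and not a strictly larger graph; this requires arranging the additional ``leak'' Kraus maps of the dilation to live in sectors that do not create spurious confusability on $O_A$. Verifying this compatibility, equivariantly and with respect to the separable standard functional, is where I expect the real work to lie.
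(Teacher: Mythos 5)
Your reduction hinges on the claim that, for a source $C\colon S \to O_A$ with trivial side-information system, the confusability $G$-graph of the source is $\mathfrak{R}(C)\circ\mathfrak{R}(C)^{\dagger}$. Unwinding Definition~\ref{def:confsourcegraph} with $O_B$ trivial actually gives $\widetilde{\Gamma_C} = 1 - s(\widetilde{f})$ where, as a relation, $s(\widetilde{f}) = \mathfrak{R}(C)\circ \Delta_{S}^{\perp}\circ \mathfrak{R}(C)^{\dagger}$; this complement coincides with $\mathfrak{R}(C)\circ\mathfrak{R}(C)^{\dagger}$ only when $C$ is reversible (which the source-channel framework forces anyway) \emph{and} its relation has full range. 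But granting those hypotheses kills the approach: if $\mathfrak{R}(C)^{\dagger}\circ\mathfrak{R}(C) = \Delta_S$, then
\begin{equation*}
\bigl(\mathfrak{R}(C)\circ\mathfrak{R}(C)^{\dagger}\bigr)\circ\bigl(\mathfrak{R}(C)\circ\mathfrak{R}(C)^{\dagger}\bigr) \;=\; \mathfrak{R}(C)\circ\Delta_S\circ\mathfrak{R}(C)^{\dagger} \;=\; \mathfrak{R}(C)\circ\mathfrak{R}(C)^{\dagger},
\end{equation*}
so every graph arising from a trivial-$O_B$ source is idempotent (transitive) in $\QRel(G)$ --- classically, a disjoint union of complete graphs. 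A path on three vertices with loops is a perfectly good confusability graph that is not transitive, so no choice of $f$, of $L=\mathfrak{R}(f)^{\dagger}$, or of channel realising $L$ can make your scheme work; equivalently, your plan needs $\mathfrak{R}(f)\circ\mathfrak{R}(f)^{\dagger} = \Delta_B$ for the channel $f$ produced by Proposition~\ref{prop:graphtochan} (so that the source is reversible), which fails in general. The invertibility issue you flag via Proposition~\ref{prop:relchancond} is real but secondary: nontrivial side information is not a technical patch but the essential ingredient.

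Your fallback (nontrivial $O_B$ via a dilation) points in the right direction but is not a proof, and it also mischaracterises what must be checked: the confusability graph of a source is \emph{not} obtained by contracting the $O_B$-legs of $\mathfrak{R}(\tilde{C})\circ\mathfrak{R}(\tilde{C})^{\dagger}$ (an existential condition on a shared side-information symbol); it is the complement of the support of the partial trace over $O_B$ of the complete simple graph projector $\widetilde{\Delta_{Z_S \otimes Z_S^*}^{\perp}}$ conjugated by the dilation $\beta$, as in~\eqref{eq:ftilde} --- a genuinely different object. The paper's proof never passes through Proposition~\ref{prop:graphtochan} at all: it is a direct construction taking the classical two-point source $S = \mathbb{C}\oplus\mathbb{C}$, side-information system built from $Z_B = Z_A \otimes Z_A^*$, and dilation components $\beta_0,\beta_1$ wired up from the graph projection $\widetilde{\Gamma^{\perp}}$ itself, so that $\widetilde{f}$ visibly has support $\widetilde{\Gamma^{\perp}}$. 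Some construction of this kind --- encoding the graph into the correlations between Alice's and Bob's halves of the source --- is exactly the part your proposal leaves open.
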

\begin{corollary*}
Let $G_1,G_2$ be confusability $G$-graphs. The set of covariant homomorphisms $G_1 \to G_2$ is precisely the set of encoding channels for a covariant zero-error source-channel coding scheme where the confusability $G$-graph of the source is $G_1$ and the confusability $G$-graph of the channel is $G_2$.
\end{corollary*}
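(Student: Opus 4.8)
The plan is to assemble the corollary directly from the two preceding results, namely Theorem~\ref{thm:scchoms} and Proposition~\ref{prop:channelfromsourcegraph}, so that essentially no new technical work is required beyond unwinding definitions. First I would fix the two confusability $G$-graphs $G_1$ on a system $O_A$ and $G_2$ on a system $A$. By Proposition~\ref{prop:channelfromsourcegraph}, every confusability $G$-graph arises as the confusability $G$-graph of some covariant source; applying this to $G_1$, I obtain a system $S$, a further system $O_B$, and a covariant source $C: S \to O_A \otimes O_B$ whose associated confusability $G$-graph on $O_A$ is exactly $G_1$. Likewise, since $G_2$ is a confusability $G$-graph on $A$, by Proposition~\ref{prop:graphtochan} it is the confusability $G$-graph $\Gamma_N$ of some covariant channel $N: A \to B$; this supplies the communication channel for the coding problem.

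With these ingredients chosen, I would invoke Theorem~\ref{thm:scchoms} verbatim. That theorem says that a covariant channel $E: O_A \to A$ is a valid encoding channel for a covariant zero-error source-channel coding scheme with source $C$ and communication channel $N$ \emph{precisely when} $E$ is a covariant homomorphism from the confusability $G$-graph of the source to the confusability $G$-graph of the channel, i.e.\ from $G_1$ to $G_2$. Since, by construction, the confusability $G$-graph of $C$ is $G_1$ and the confusability $G$-graph of $N$ is $G_2$, the biconditional in Theorem~\ref{thm:scchoms} reads: $E$ is a valid encoding channel for this scheme iff $E$ is a covariant homomorphism $G_1 \to G_2$. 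Reading this as an equality of sets then gives the corollary: the set of covariant homomorphisms $G_1 \to G_2$ coincides with the set of valid encoding channels for the scheme determined by $C$ and $N$.

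The only genuine subtlety I anticipate is bookkeeping rather than mathematics. I must make sure that the notion of ``homomorphism $G_1 \to G_2$'' used in the statement of the corollary is literally the same as the one appearing in Theorem~\ref{thm:scchoms}, namely the condition $\mathfrak{R}(E)^{\dagger} \circ G_2 \circ \mathfrak{R}(E) \subseteq G_1$ (with $G_1, G_2$ playing the roles of $\Gamma_A, \Gamma_B$ respectively); this is a purely definitional check. I should also confirm that the systems $S$ and $O_B$ produced by Proposition~\ref{prop:channelfromsourcegraph}, together with the system $B$ and channel $N$ produced by Proposition~\ref{prop:graphtochan}, are legitimate inputs to the coding problem as stated, so that the scheme $(E,D)$ contemplated in Theorem~\ref{thm:scchoms} is well-posed. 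Both requirements follow immediately from the hypotheses already established in those propositions, so the proof reduces to exhibiting the data $(C,N)$ and quoting the theorem.

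I expect no serious obstacle here: the work has all been done in Theorem~\ref{thm:scchoms} and the two existence propositions, and the corollary is precisely the statement that, once one knows every confusability $G$-graph is realised both as a source graph and as a channel graph, the abstract correspondence of Theorem~\ref{thm:scchoms} can be instantiated for \emph{any} chosen pair $(G_1,G_2)$. The content of the corollary is therefore the observation that Theorem~\ref{thm:scchoms} is not vacuous and applies uniformly, giving the promised operational semantics for covariant graph homomorphisms.
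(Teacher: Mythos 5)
Your proof is correct and matches the paper's intended argument exactly: the paper derives this corollary by combining Proposition~\ref{prop:graphtochan} (realising $G_2$ as the confusability graph of a channel $N$), Proposition~\ref{prop:channelfromsourcegraph} (realising $G_1$ as the confusability graph of a source $C$), and then quoting Theorem~\ref{thm:scchoms}. Your bookkeeping remarks about matching the homomorphism condition $\mathfrak{R}(E)^{\dagger} \circ G_2 \circ \mathfrak{R}(E) \subseteq G_1$ with Definition~\ref{def:graphhoms} are apt but, as you say, purely definitional.
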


\subsection{Related work}

\paragraph{A covariant Lovasz theta number?} In the paper~\cite{Duan2012}, a key result is the existence of a quantum Lovasz theta number; this is a real-valued function on quantum confusability graphs which is computable by a semidefinite programme, monotone under entanglement-assisted homomorphisms~\cite[Thm. 19]{Stahlke2015}, and multiplicative with respect to the tensor product. If the definition could be phrased in terms of categorical structure then it should generalise to a `covariant Lovasz number' for quasitriangular compact quantum groups $G$; that is, a real-valued function on quantum confusability $G$-graphs satisfying the same properties.

\paragraph{Higher quantum theory.} This paper can be seen as a continuation of the programme of higher quantum theory initiated in~\cite{Vicary2012,Vicary2012a}. Here we show that more general quantum protocols such as zero-error source-channel coding can also be formulated using monoidal 2-categories, and also give a further motivation (covariance) for this 2-categorical formulation. 

\paragraph{Operator algebras in rigid $C^*$-tensor categories.} Our results hold for systems and channels in an arbitrary rigid $C^*$-tensor category. To understand the following explanation the reader may find it helpful to read Section~\ref{sec:background} and Appendix~\ref{app:tworepdef} first. Let $\mathcal{T}$ be a rigid $C^*$-tensor category. If $\mathcal{T} \simeq \Rep(G)$ for some compact quantum group $G$, then $\F$s in $\mathcal{T}$ may be identified with $G$-$C^*$-algebras equipped with their separable standard functional, as described in Appendix~\ref{app:tworepdef}. In the general case (for there are rigid $C^*$-tensor categories which are not equivalent to representation categories of compact quantum groups), $\F$s may be identified with finite-dimensional \emph{operator algebras in the rigid $C^*$-tensor category $\mathcal{T}$} in the sense of~\cite{Jones2017,Jones2017a}. The Q-system completion of $\mathcal{T}$ in the sense of Appendix~\ref{app:tworepdef} yields a semisimple $C^*$-2-category which we will call $\TwoRep(\mathcal{T})$, in which $\mathcal{T}$ embeds as the endomorphism category of a simple object $r_0$.

\ignore{Semisimple $C^*$-2-categories in the sense of~\cite[Def. 2.25]{Verdon2021} are additive $C^*$-2-categories which are locally semisimple, rigid (i.e. all 1-morphisms are dualisable), where every object is a finite direct sum of objects with simple identity $1$-morphism, and where all $\F$s in endomorphism categories split as pair of pants algebras. By~\cite[Prop. 3.25]{Verdon2021}, semisimple $C^*$-2-categories are precisely the Q-system completions of rigid $C^*$-tensor categories.

We will be precise about how our results generalise;} 

By semisimplicity of $\TwoRep(\mathcal{T})$ every $\F$ in $\mathcal{T}$ is an algebra of the form $X \otimes X^*$ for some 1-morphism out of the object $r_0$ in $\TwoRep(\mathcal{T})$, with multiplication and unit as in~\eqref{eq:pairofpants}. By~\cite[Thm. 4.11]{Verdon2021} CP morphisms and channels between $\F$s in $\mathcal{T}$ may be defined by their dilations as in Proposition~\ref{prop:dilation}, and Choi's theorem (Theorem~\ref{thm:choi}) follows. Relations and graphs for systems in $\mathcal{T}$ may be defined precisely as in Section~\ref{sec:relsandgraphs}, since we only used categorical structure in the definition. The results up to Section~\ref{sec:scc} then hold for systems, channels, graphs and relations in $\mathcal{T}$, using precisely the same proofs. If the category $\mathcal{T}$ is additionally braided, the results in Section~\ref{sec:scc} hold also, again using precisely the same proofs. The material in Appendix~\ref{app:monoidal} also goes through for a general braided rigid $C^*$-tensor category, using precisely the same proofs.

\subsection{Data availability statement}

Data sharing not applicable to this article as no datasets were generated or analysed during the current study.

\subsection{Acknowledgements}

We are especially grateful to Matthew Daws and Andre Kornell for independently pointing out an error in Proposition~\ref{prop:relchancond} of the published version of this article; the erroneous part of that proposition is struck through in this version, and a correction will be sent to the journal. We thank Benjamin Musto, David Reutter and Andreas Winter for useful conversations, and Ashley Montanaro for his support of this work. All diagrams were drawn using the open source vector graphics program \emph{Inkscape}. This work has been funded by the European Research Council (ERC) under the European Union’s Horizon 2020 research and innovation programme (grant agreement No. 817581). This work has also been funded by EPSRC. 

\section{Background}\label{sec:background}

\subsection{The 2-category $\TwoRep(G)$}

In this section we give a very quick summary of the 2-category $\TwoRep(G)$ for a compact quantum group $G$, and its graphical calculus. (For a more detailed introduction see~\cite{Verdon2021}.) We assume familiarity with the definition of a 2-category~\cite{Johnson2021}, as well as the definition of dagger categories and unitary (a.k.a. dagger) functors~\cite{Heunen2019a}. We use the notation $\circ$ for vertical composition and $\otimes$ for horizontal composition. By coherence for bicategories we need not worry about whether the 2-categories we consider are strict~\cite[\S{}4]{Bartlett2008}, so we use `2-category' to refer to both strict and weak bicategories.

\paragraph{Graphical calculus for 2-categories.}
The graphical calculus for 2-categories is a region-labelled version of the standard `string diagram' calculus for monoidal categories. Objects are represented by labelled regions; 1-morphisms are represented by labelled wires; and 2-morphisms are represented by boxes with labelled input and output wires. Wires corresponding to identity 1-morphisms, and boxes corresponding to identity 2-morphisms, are invisible. Diagrams which are planar isotopic represent the same 2-morphism. 

We read diagram composition from left to right and from bottom to top. Vertical composition of 2-morphisms is represented by vertical juxtaposition of diagrams. For instance, let $X,Y,Z: r \to s$ be 1-morphisms and $\alpha: X \to Y$, $\beta: Y \to Z$ be 2-morphisms; then $\beta \circ \alpha: X \to Z$ is represented as follows:
\begin{calign}\nonumber
\includegraphics[valign=c]{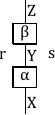}
\end{calign}
Horizontal composition of 2-morphisms is represented by horizontal juxtaposition of diagrams. For instance, let $X,X': r \to s$ and $Y,Y': s \to t$ be 1-morphisms, and let $\alpha: X \to X'$, $\beta: Y \to Y'$ be 2-morphisms; then $\alpha \otimes \beta: X \otimes Y \to X' \otimes Y'$ is represented as follows:
\begin{calign}\nonumber
\includegraphics[valign=c]{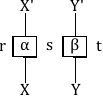}
\end{calign}

\paragraph{The 2-category $\TwoRep(G)$.} The semisimple $C^*$-2-category $\TwoRep(G)$ can be defined in at least three equivalent ways~\cite{Verdon2021}.
\begin{itemize}
\item The 2-category of separable standard Frobenius algebras, dagger bimodules and bimodule homomorphisms in the category $\Rep(G)$ of continuous f.d.\ unitary representations of $G$ (also known as the \emph{Q-system completion} of $\Rep(G)$~\cite{Giorgetti2023,Chen2022}). This 2-category was called $\Bimod(\Rep(G))$ in~\cite{Verdon2021}.
\item The 2-category of cofinite semisimple finitely decomposable $\Rep(G)$-$C^*$-module categories, unitary linear module functors and natural transformations of module functors. This 2-category was called $\Mod(\Rep(G))$ in~\cite{Verdon2021}.
\item The 2-category of f.d.\ $G$-$C^*$-algebras~\cite{Wang1998}, $G$-equivariant f.d.\ Hilbert $C^*$-bimodules~\cite[P.6]{Neshveyev2018} and $G$-equivariant bimodule homomorphisms. 
\end{itemize}
Since the reader may be unfamiliar with these definitions we have given a self-contained summary of the first definition in Appendix~\ref{app:tworepdef}. The equivalence between the first and second definition was shown in~\cite[Thm. 3.21]{Verdon2021}; a sketch of how the third definition is equivalent to the first is given in Appendix~\ref{app:tworepdef}.

Throughout this work we will write objects with lower case letters $r,s,\dots$; 1-morphisms with upper case letters $X,Y,\dots$; and 2-morphisms with lower case letters $f,g,\dots$.

\paragraph{The $C^*$ structure of $\TwoRep(G)$.} The 2-category $\TwoRep(G)$ is a \emph{$C^*$-2-category}. In particular:
\begin{itemize}
\item For any 1-morphisms $X,Y: r \to s$ the set $\Hom(X,Y)$ is a Banach space. Vertical and horizontal composition induce linear maps on 2-morphism spaces.
\item Every 2-morphism $f:X \to Y$ has a \emph{dagger} 2-morphism $f^{\dagger}: Y \to X$. Taking the dagger induces an antilinear map on Hom-spaces. The dagger satisfies the following properties:
\begin{align}
(f^{\dagger})^{\dagger} = f 
&&
(f \otimes g)^{\dagger} = f^{\dagger} \otimes g^{\dagger}
&&
||f^{\dagger} \circ f|| = ||f||^2
\end{align}
The last property implies that, for any 1-morphism $X$, the algebra $\End(X)$ is a $C^*$-algebra with involution given by the dagger. 
\item For any 2-morphism $f: X \to Y$, the 2-morphism $f^{\dagger} \circ f$ is a positive element of the $C^*$-algebra $\End(X)$.
\end{itemize}
We say that a 2-morphism $f: X \to Y$ is an \emph{isometry} if $f^{\dagger} \circ f = \id_X$, a \emph{coisometry} if $f \circ f^{\dagger} = \id_Y$, and a \emph{unitary} if it is both an isometry and a coisometry. We say that a 2-morphism $f: X \to Y$ is a \emph{partial isometry} if $f^{\dagger} \circ f \in \End(X)$ is a projection (or equivalently, if $f \circ f^{\dagger} \in \End(Y)$ is a projection).

\paragraph{Rigidity of $\TwoRep(G)$.} The 2-category $\TwoRep(G)$ is \emph{rigid}. This means that every 1-morphism $X: r \to s$ has a \emph{dual} 1-morphism $X^*: s \to r$. In order to represent duality we orient the 1-morphism wires: $X$ and $X^*$ are represented by a wire with upwards and downwards pointing arrows respectively. Duality is characterised by the following 2-morphisms, called \emph{cups} and \emph{caps}:
\begin{calign}\nonumber
\includegraphics[valign=c]{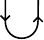}
&
\includegraphics[valign=c]{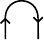}
&
\includegraphics[valign=c]{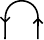}
&
\includegraphics[valign=c]{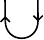} 
\\\label{eq:cupscaps}
\eta_X: \id_s \to X^* \otimes X
&
\epsilon_X: X \otimes X^* \to \id_r
&
\eta_X^{\dagger}: X^* \otimes X \to \id_s
&
\epsilon_X^{\dagger}: \id_r \to X \otimes X^*
\end{calign}
These cups and caps obey the \emph{snake equations}:
\begin{calign}\label{eq:snake}
\includegraphics[valign=c]{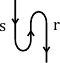}
~~=~~
\includegraphics[valign=c]{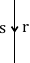}
&
\includegraphics[valign=c]{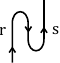}
~~=~~
\includegraphics[valign=c]{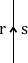}
&
\includegraphics[valign=c]{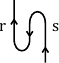}
~~=~~
\includegraphics[valign=c]{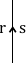}
&
\includegraphics[valign=c]{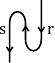}
~~=~~
\includegraphics[valign=c]{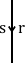}
\end{calign}

\paragraph{Linear structure of $\TwoRep(G)$.} The 2-category $\TwoRep(G)$ is  \emph{locally semisimple}, which means that, for any pair of objects $r,s$:
\begin{itemize}
\item For any pair of 1-morphisms $X,Y: r \to s$ there is a 1-morphism $X_1 \oplus X_2: r \to s$ (called the \emph{direct sum}), with isometries $i_1: X_1 \to X_1 \oplus X_2$, $i_2: X_2 \to X_1 \oplus X_2$ such that $i_1 \circ i_1^{\dagger} + i_2 \circ i_2^{\dagger} = \id_{X_1 \oplus X_2}$. 
\item There is a \emph{zero 1-morphism} ${\bf 0}_{r,s}: r \to s$ such that $\End({\bf 0})$ is zero-dimensional.
\item For any 1-morphism $X: r \to s$, we say that a 2-morphism $f \in \End(X)$ is a \emph{projection} (a.k.a. a \emph{dagger idempotent}) if $f = f^{\dagger} = f \circ f$. Every projection $f \in \End(X)$ has a \emph{splitting}, i.e. a 1-morphism $V: r \to s$ together with an isometry $\iota_f: V \to X$ such that $f = \iota_f \circ \iota_f^{\dagger}$. Splittings are unique up to unitary isomorphism.
\item The $C^*$-algebra $\End(X)$ is finite-dimensional for every 1-morphism $X$. (Together with idempotent splitting this implies that every 1-morphism is a finite direct sum of \emph{simple} 1-morphisms, i.e. 1-morphisms $X_i$ such that $\End(X_i) \cong \mathbb{C}$.)
\end{itemize}
Every pair of objects also has a direct sum, in the following sense. 
\begin{definition}
We say that a \emph{direct sum} of two objects $r_1, r_2$ is an object $r_1 \boxplus r_2$ with inclusion and projection 1-morphisms $\iota_i: r_i \to r_1 \boxplus r_2$, $\rho_i: r_1 \boxplus r_2 \to r_i$ such that:
\begin{itemize}
\item $\iota_i \otimes \rho_i$ is unitarily isomorphic to $\id_{r_i}$.
\item $\iota_1 \otimes \rho_2 \in \Hom(r_1,r_2)$ and $\iota_2 \otimes \rho_1 \in \Hom(r_2,r_1)$ are zero 1-morphisms.
\item $\id_{r_1 \oplus r_2}$ is a direct sum of $\rho_1 \otimes \iota_1$ and $\rho_2 \otimes \iota_2$.
\end{itemize}
\end{definition}

\paragraph{Dagger, transpose and conjugate in $\TwoRep(G)$.} For any 2-morphism $f: X \to Y$ we can define its \emph{transpose} (a.k.a. \emph{mate}) $f^*: Y^* \to X^*$: 
\begin{calign}\label{eq:rtranspose}
\includegraphics[valign=c]{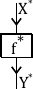}
~~=~~
\includegraphics[valign=c]{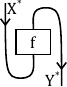}
\end{calign}
We can also define its \emph{conjugate} $f_{*}: X^* \to Y^*$:
$$
f_{*} := (f^*)^{\dagger} = (f^{\dagger})^*
$$
To represent all these 2-morphisms in the diagrammatic calculus we draw 2-morphism boxes with an offset edge:
\begin{calign}\label{eq:boxesdefine}
\includegraphics[valign=c]{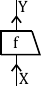}
&
\includegraphics[valign=c,scale=1]{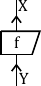}
&
\includegraphics[valign=c]{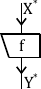}
&
\includegraphics[valign=c,scale=1]{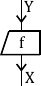}
\\
f: X \to Y
&
f^{\dagger}: Y \to X 
&
f^*: Y^* \to X^*
&
f_{*}: X^* \to Y^*
\end{calign}
The 2-morphisms then obey the following \emph{sliding equations}:
\begin{calign}
\includegraphics[valign=c]{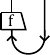}
~~=~~
\includegraphics[valign=c]{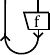}
&
\includegraphics[valign=c]{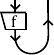}
~~=~~
\includegraphics[valign=c]{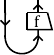}
&
\includegraphics[valign=c]{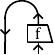}
~~=~~
\includegraphics[valign=c]{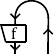}
&
\includegraphics[valign=c]{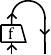}
~~=~~
\includegraphics[valign=c]{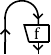}
\\\label{eq:sliding}
\includegraphics[valign=c]{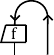}
~~=~~
\includegraphics[valign=c]{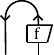}
&
\includegraphics[valign=c]{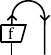}
~~=~~
\includegraphics[valign=c]{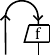}
&
\includegraphics[valign=c]{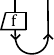}
~~=~~
\includegraphics[valign=c]{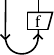}
&
\includegraphics[valign=c]{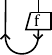}
~~=~~
\includegraphics[valign=c]{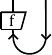}
\end{calign}

\paragraph{Dimension and trace in $\TwoRep(G)$.}
For any 1-morphism $X: r \to s$ we define its \emph{left dimension} to be the 2-morphism $\dim_L(X):= \eta_X^{\dagger} \circ \eta_X \in \End(\id_s)$, and its \emph{right dimension} to be the 2-morphism $\dim_R(X):= \epsilon_X \circ \epsilon_X^{\dagger} \in \End(\id_r)$.
\begin{calign}
\dim_L(X) ~=~ 
\includegraphics[valign=c]{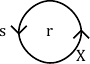}
~~~~~~~~~~~~~~~~
\dim_R(X) ~=~ 
\includegraphics[valign=c]{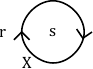}
\end{calign}
Note that the $C^*$-algebras $\End(\id_s)$ and $\End(\id_r)$ are both commutative, and the elements $\dim_L(X)$ and $\dim_R(X)$ are positive. There is a canonical choice of faithful positive trace $\Tr_s: \End(\id_s) \to \mathbb{C}$ and $\Tr_r: \End(\id_r)\to \mathbb{C}$ on these commutative  algebras (given in Definition~\ref{def:endotrace}); we define the \emph{quantum dimension} of the 1-morphism $X$ to be the number $d(X):= \Tr_s(\dim_L(X)) = \Tr_r(\dim_R(X))$.

If $\dim_L(X)$ and $\dim_R(X)$ are invertible we say that $X$ is a \emph{separable} 1-morphism. We write $d_X := \dim_L(X)$ and $n_{X}:= \sqrt{d_X} \in \End(\id_s)$.

For any 1-morphism $X: r \to s$ we define the \emph{trace} $\Tr: \End(X) \to \mathbb{C}$ as follows:
\begin{calign}\label{eq:qtrace}
\Tr(f):=
\Tr_s\left(
\includegraphics[valign=c]{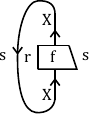}\right)
~=~
\Tr_r\left(\includegraphics[valign=c]{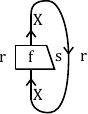}\right)
\end{calign}
As the notation suggests, this is a positive faithful trace on $\End(X)$ (this, and the equality between the left and right traces in~\eqref{eq:qtrace}, are shown in Lemma~\ref{lem:endotrace}). Note that $d(X) = \Tr(\id_X)$.

\paragraph{Tensor product in $\TwoRep(G)$.} If $G$ is a quasitriangular compact quantum group then $\TwoRep(G)$ inherits a tensor product (see Appendix~\ref{app:monoidal}). In this case we can use a 3-dimensional diagrammatic calculus~\cite[\S{}8]{Heunen2019}\cite{Hummon2012}.  Let $X,X': r_1 \to r_2$ and $Y,Y': s_1 \to s_2$. In the diagrammatic calculus, the tensor product $f \boxtimes g: X \boxtimes Y \to X' \boxtimes Y'$ of two 2-morphisms $f: X \to X'$ and $g: Y \to Y'$ is depicted by layering $f$ below $g$:
\begin{align*}
\includegraphics[valign=c]{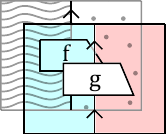}
\end{align*}
In  the above diagram the $g$-plane is in front of the $f$-plane. The $s_1$-region is shaded in blue, the $s_2$ region is shaded in red, the $r_1$ region is shaded with wavy lines, and the $r_2$ region is shaded with polka dots. Throughout this work we will follow the convention of shading the plane in front with translucent colours, and the plane at the back with black and white patterns. 

The unit object for this tensor product is not depicted in the graphical calculus (i.e. the corresponding planar regions are left white). 

For an object $r \boxtimes s$ and a morphism $X \in \End(r)$ (resp. $Y \in \End(s)$) there exists a positive faithful partial trace $\Tr_s(f): \End(X \boxtimes \id_s) \to \End(X)$  such that $\Tr_r \circ \Tr_s = \Tr_{r \boxtimes s}$. (See Definition~\ref{def:partialtrace} and Lemma~\ref{lem:partialtrace}.)

\begin{example}[The case of trivial $G$.] If the group $G$ is trivial then the 2-category $\TwoRep(G)$ is precisely the 2-category $\TwoFHilb$ defined in~\cite[\S{}8.2.5]{Heunen2019}. To see this, one can use the second definition of $\TwoRep(G)$ in terms of cofinite semisimple finitely decomposable $C^*$-module categories; when the group $G$ is trivial, the module and cofiniteness conditions disappear and we are left only with the 2-category of semisimple finitely decomposable $C^*$-categories; these are all finite direct sums of copies of the $C^*$-category $\Hilb$, whereby we arrive at the definition of $\TwoFHilb$. 

The 2-category $\TwoFHilb$ admits an interpretation in terms of indexed families of Hilbert spaces and linear maps which was developed in~\cite[\S{}8.2]{Heunen2019}~\cite{Reutter2019}\cite{Verdon2022}. The equivalence classes of objects of $\TwoFHilb$ correspond to natural numbers; equivalence classes of 1-morphisms $X,Y,\dots: [m] \to{} [n]$ correspond to $m \times n$ matrices of f.d. Hilbert spaces; and the 2-morphisms $f,g,\dots:X \to Y$ correspond to $m \times n$ matrices of linear maps between these f.d. Hilbert spaces. Composition of 1-morphisms is matrix multiplication using the tensor product and the direct sum of Hilbert spaces. Horizontal composition of 2-morphisms is matrix multiplication using the tensor product and direct sum of linear maps, and vertical composition of 2-morphisms is entrywise composition of linear maps. 
\end{example}

\begin{example}[The case of a finite group $G$.] If $G$ is a finite group then we can use the second definition of $\TwoRep(G)$ and the classification of $\Rep(G)$-module categories given in~\cite{Ostrik2003}. Equivalence classes of simple objects of $\TwoRep(G)$ --- i.e. Morita equivalence classes of simple f.d. $G$-$C^*$-algebras --- correspond to conjugacy classes of pairs $(H,\psi)$, where $H < G$ is a subgroup and $\psi \in H^2(H_1,\mathbb{C}^*)$ is a cohomology class. A description of the $\Hom$-categories between two such simple objects is given in~\cite[Prop. 3.1]{Ostrik2003}. The $\Hom$-categories between direct sums of simple objects are given by matrices of 1-morphisms between the simple factors and matrices of 2-morphisms between these 1-morphisms. In fact, this is generally true; once one knows the $\Hom$-categories between equivalence classes of simple objects, the $\Hom$-categories between direct sums of simple objects can be defined using matrix algebra~\cite[App. 6]{Verdon2021}.
\end{example}

\begin{example}[The case of a torsion-free compact quantum group] We say that a compact quantum group $G$ is \emph{torsion-free} if the category $\Rep(G)$ of f.d. unitary representations of $G$ is torsion free in the sense of~\cite[Def. 3.7]{Arano2015}. For torsion-free $G$, the 2-category $\TwoRep(G)$ has one equivalence class of simple objects, so, just as for trivial $G$, equivalence classes of objects correspond to natural numbers counting the number of times the unique simple object appears as a factor in the direct sum. Equivalence classes of 1-morphisms $X,Y,\dots: [m] \to{} [n]$ correspond to $m \times n$ matrices of f.d. unitary representations of $G$. The 2-morphisms $f,g,\dots: X \to Y$ are matrices of intertwiners between these representations. Composition of 1-morphisms is matrix multiplication using the direct sum and tensor product of representations.
\end{example}

\subsection{Systems and channels}\label{sec:systemschannels}

Let $G$ be a compact quantum group. As we discuss in Appendix~\ref{app:tworepdef}, a finite-dimensional (f.d.) $G$-$C^*$-algebra $A$ equipped with its canonical separable standard linear functional can be identified with a \emph{separable standard Frobenius algebra (\F)} in the category $\Rep(G)$ of f.d. unitary representations of $G$. This is an object $A$ of $\Rep(G)$ equipped with a \emph{multiplication} morphism $m: A \otimes A \to A$ and a \emph{unit} morphism $u: \mathbbm{1} \to A$ (where $\mathbbm{1}$ is the trivial representation), satisfying the following conditions:   
\begin{align*}
\includegraphics[scale=.8,valign=c]{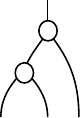}
~~=~~
\includegraphics[scale=.8,valign=c]{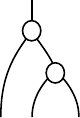}
&&
\includegraphics[scale=.8,valign=c]{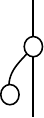}
~~=~~
\includegraphics[scale=.8,valign=c]{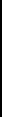}
~~=~~
\includegraphics[scale=.8,valign=c]{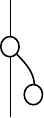}
&&
\includegraphics[scale=.8,valign=c]{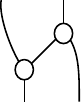}
~~=~~
\includegraphics[scale=.8,valign=c]{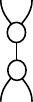}
~~=~~
\includegraphics[scale=.8,valign=c]{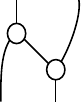}
\end{align*}
Here we have drawn $m: A \otimes A \to A$, $u: \mathbbm{1} \to A$ and their adjoints the \emph{comultiplication} $m^{\dagger}: A \to A \otimes A$ and \emph{counit} $u^{\dagger}: A \to \mathbbm{1}$ as white vertices; they can be distinguished by their type. There are also additional conditions, \emph{separability} and \emph{standardness}, which are detailed in Appendix~\ref{app:tworepdef} but will not be necessary for the discussion here. We therefore make the following definition.
\begin{definition}
We define a \emph{system} to be an $\F$ in $\Rep(G)$.
\end{definition}
\noindent
By~\cite[Thm. 3.2.3]{Verdon2020b} (c.f.~\cite[Thm. 7.18]{Heunen2019}), a covariant CP map between systems $A,B$ can be identified with a morphism $f: A \to B$ in $\Rep(G)$ such that the following is a positive element of the f.d. $C^*$-algebra $\End(A \otimes B)$:
\begin{calign}\label{eq:cpcond}
\includegraphics[scale=.8,valign=c]{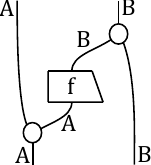}
\end{calign}
Here the white vertex at the bottom left is the comultiplication of $A$, while the white vertex at the top right is the multiplication of $B$. 
\begin{definition}
Let $A,B$ be systems. We call a morphism $f: A \to B$ in $\Rep(G)$ obeying~\eqref{eq:cpcond} a \emph{CP morphism}. If the morphism $f$ obeys the additional counit-preservation condition
\begin{calign}\label{eq:tp}
u_B^{\dagger} \circ f = u_A^{\dagger}
\end{calign}
we say that it is a \emph{channel}. This additional condition~\eqref{eq:tp} says precisely that $f$ preserves the canonical separable standard functional on the f.d. $G$-$C^*$-algebras. 

Systems and CP morphisms (resp. channels) form a category which we call $\CP(G)$ (resp. $\Chan(G)$). The category $\CP(G)$ inherits a dagger structure from $\TwoRep(G)$.
\end{definition}
\begin{remark}\label{rem:functionalchoice}
In basic quantum information theory, channels between matrix algebras are usually defined to be CP maps preserving the matrix trace, which in the covariant setting may not be $G$-invariant. For technical reasons related to duality of 1-morphisms in $\TwoRep(G)$~\cite[\S{}3.1.2]{Verdon2021} we have here stipulated that channels should preserve the separable standard $G$-invariant functional. This does not make a substantial difference to the overall theory. Indeed, had we chosen different $G$-invariant functionals the Frobenius algebra $A'$ corresponding to a given $G$-$C^*$-algebra would no longer be separable and standard, but it would be related to its separable and standard counterpart $A$ by an invertible $*$-homomorphism. Let $f_A: A' \to A$ and $f_B: B' \to B$ be such invertible $*$-homomorphisms; then there is a bijection between channels $\phi': A' \to B'$ and channels $\phi: A \to B$ given by $\phi' = f_B^{\dagger} \circ \phi \circ (f_A^{-1})^{\dagger}$.
\end{remark}
\noindent
In order to construct and study CP morphisms we now introduce the 2-category $\TwoRep(G)$. 

First observe that the 2-category $\TwoRep(G)$ has a privileged object, which we will write as $r_0$. (As discussed in Appendix~\ref{app:tworepdef}, if $\TwoRep(G)$ is defined in terms of $\F$s and dagger bimodules in $\Rep(G)$, this object is the trivial $\F$ $\mathbbm{1}$.) There is an equivalence $\End(r_0) \simeq \Rep(G)$.

Let $X: r_0 \to s$ be a separable 1-morphism out of the object $r_0$ in $\TwoRep(G)$. Then $X \otimes X^*$ is an object of $\End(r_0) \simeq \Rep(G)$ with the following \emph{multiplication} and \emph{counit} morphisms. (In the subsequent diagrams we leave the $r_0$-regions unshaded and shade the $s$-regions with wavy lines. Recall also that the identity 1-morphisms are invisible; we draw endomorphisms of identity 1-morphisms as `floating disks' surrounded by a dashed line.)
\begin{calign}\nonumber
&\includegraphics[scale=.6,valign=c]{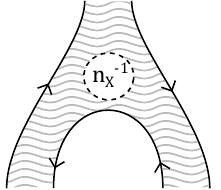}
&&
&\includegraphics[scale=.6,valign=c]{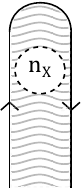}
\\\label{eq:pairofpants}
& m: (X \otimes X^*) \otimes (X \otimes X^*) \to X \otimes X^*
&&
& u^{\dagger}: X \otimes X^* \to \mathbbm{1}
\end{calign}
These morphisms give the object $X \otimes X^*$ the structure of a $\F$ in $\Rep(G)$. 
\begin{proposition}[{\cite[Cor. 3.37]{Chen2022}}] Up to isomorphism, all systems can be obtained from separable 1-morphisms $X: r_0 \to s$ out of the object $r_0$ in $\TwoRep(G)$, with the multiplication and counit defined in~\eqref{eq:pairofpants}.
\end{proposition}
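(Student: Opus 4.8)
The plan is to realise the system $A$ explicitly as a pair-of-pants algebra $X \otimes X^*$ using the bimodule model of $\TwoRep(G) = \Bimod(\Rep(G))$ described in the background. Recall that in this model an object is an $\F$ in $\Rep(G)$, the privileged object $r_0$ is the trivial algebra $\mathbbm{1}$, a $1$-morphism $r_0 \to s$ is a dagger $(\mathbbm{1}, A_s)$-bimodule (i.e.\ a right $A_s$-module in $\Rep(G)$), and horizontal composition is the relative tensor product over the intermediate algebra. Given a system $A$ --- an $\F$ in $\End(r_0) \simeq \Rep(G)$ --- I would take $s$ to be the object of $\TwoRep(G)$ given by the $\F$ $A$ itself, and take $X : r_0 \to s$ to be $A$ viewed as a right module over itself, with its canonical $(\mathbbm{1},A)$-bimodule structure. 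Its dual $X^* : s \to r_0$ is then the conjugate bimodule, namely $A$ as a left $A$-module, and the horizontal composite $X \otimes X^*$ is the relative tensor product $A \otimes_A A$, which is canonically isomorphic as an object of $\Rep(G)$ to $A$ via the multiplication map.

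The argument then proceeds in three steps. \textbf{Step 1 (separability of $X$).} I would compute $\dim_L(X)$ and $\dim_R(X)$ in terms of the Frobenius structure of $A$; because $A$ is separable and standard, the cup and cap for the bimodule $X$ are built from the (co)multiplication and (co)unit of $A$, and their composites $\eta_X^\dagger \circ \eta_X$ and $\epsilon_X \circ \epsilon_X^\dagger$ are invertible. Hence $X$ is a separable $1$-morphism, as required by the statement. \textbf{Step 2 (identifying the underlying object).} I would fix the canonical isomorphism $\theta : X \otimes X^* \xrightarrow{\sim} A$ in $\Rep(G)$ coming from $A \otimes_A A \cong A$. \textbf{Step 3 (matching the Frobenius structure).} I would verify that under $\theta$ the pair-of-pants multiplication and counit of \eqref{eq:pairofpants} are carried to the original multiplication $m_A$ and counit $u_A^\dagger$ of the system $A$. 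Concretely, the pair-of-pants multiplication contracts the inner $X^* \otimes X$ using the cap $\eta_X^\dagger$; in the bimodule model this cap is (a normalisation of) the multiplication $m_A : A \otimes A \to A$, so after the identification $\theta$ the pair-of-pants product becomes $m_A$, and similarly the pair-of-pants counit becomes $u_A^\dagger$.

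The main obstacle is Step 3: one must unwind the definitions of the cups, caps and dual $X^*$ for the regular bimodule --- all of which are assembled from the Frobenius comultiplication, multiplication, unit and counit of $A$ --- and check, by a diagram chase using the snake equations \eqref{eq:snake}, the sliding equations \eqref{eq:sliding}, and the Frobenius and separability relations, that feeding these into the pair-of-pants maps reproduces \emph{exactly} $m_A$ and $u_A^\dagger$ with no spurious scalar or dimension factors. It is precisely the \emph{standardness} of $A$ that guarantees the correct normalisation here (the factors $n_X = \sqrt{d_X}$ appearing in the standard cups and caps cancel), so that $\theta$ is an isomorphism of $\F$s and not merely of underlying objects.

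Finally, I note that this statement is an instance of the abstract fact that $\TwoRep(G)$, being the Q-system completion of $\Rep(G)$, is \emph{Q-system complete}: every $\F$ (Q-system) in the endomorphism category $\End(r_0)$ splits as $X \otimes X^*$ for some $1$-morphism $X$ out of $r_0$. One could therefore also obtain the result by invoking the universal property of Q-system completion directly, with the explicit construction above serving to exhibit the splitting $1$-morphism $X$ and to confirm its separability.
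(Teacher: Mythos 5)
Your proposal is correct and is essentially the same argument as the paper's: the paper gives no in-text proof but defers to \cite[Cor. 3.37]{Chen2022}, and the regular-bimodule construction you describe ($s$ the object of $\TwoRep(G)$ corresponding to the $\F$ $A$, $X = {}_{\mathbbm{1}}A_A$, $X^* \cong {}_{A}A_{\mathbbm{1}}$ via the Frobenius self-duality, and $X \otimes X^* = A \otimes_A A \cong A$ by splitting the separability idempotent through $\iota = m^\dagger$) is precisely the proof underlying that citation, with your closing remark about Q-system completeness being the abstract form the paper actually invokes. One simplification worth noting for your Step 3: for this particular $X$ one has $\dim_L(X) = \eta_X^{\dagger} \circ \eta_X = m \circ m^{\dagger} = \id$ by separability of $A$, so $d_X$ and $n_X$ are identities and the normalisation factors you flag as the main obstacle vanish identically, leaving only the routine diagram chase with the Frobenius and unit equations.
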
 
\noindent 
We call the algebra structure~\eqref{eq:pairofpants} on $X \otimes X^*$ a \emph{pair of pants} algebra, because of the appearance of the multiplication $m$.

The fact that all systems can be expressed as pair of pants algebras gives us an extremely useful description of CP morphisms between them. The following result is a finite-dimensional, $G$-covariant version of Stinespring's theorem~\cite{Stinespring1955}. In the diagrams we shade the $t$-regions with polka dots.
\begin{proposition}[{\cite[Thm. 4.11]{Verdon2021}}]\label{prop:dilation}
Let $X: r_0 \to s$ and $Y: r_0 \to t$ be separable 1-morphisms in $\TwoRep(G)$, let $X \otimes X^*$ and $Y \otimes Y^*$ be the corresponding systems, and let $f: X \otimes X^* \to Y \otimes Y^*$ be a 2-morphism. We say that $f$ \emph{admits a dilation} if there exists a 1-morphism $E: t \to s$ in $\TwoRep(G)$ (the \emph{environment}) and a 2-morphism $\tau: X \to Y \otimes E$ (the \emph{dilation}) such that $f$ is obtained as follows:
\begin{calign}\label{eq:stinespring}
\includegraphics[scale=.7,valign=c]{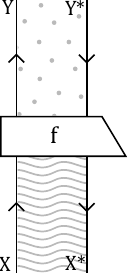}
~~=~~
\includegraphics[scale=.7,valign=c]{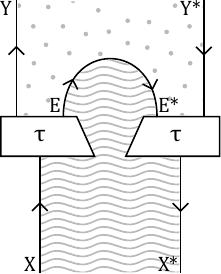}
\end{calign}
A 2-morphism $f: X \otimes X^* \to Y \otimes Y^*$ is a CP morphism iff it admits a dilation, and furthermore a channel iff it additionally preserves the counit, i.e.:
\begin{calign}\label{eq:channeldef}
\includegraphics[scale=.7,valign=c]{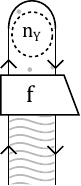}
~~=~~
\includegraphics[scale=.7,valign=c]{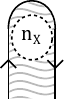}
\end{calign}
\end{proposition}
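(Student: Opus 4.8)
The plan is to prove the two biconditionals separately: the direction ``$f$ admits a dilation $\Rightarrow$ $f$ is CP'' is routine diagram-chasing, whereas the converse carries the real content, and the channel refinement follows from an analogous manipulation. Two structural facts drive everything. First, $\End(A \otimes B)$, where $A = X \otimes X^*$ and $B = Y \otimes Y^*$, is a finite-dimensional $C^*$-algebra, so a 2-morphism in it is positive if and only if it factors as $g^\dagger \circ g$ (take the square root). Second, rigidity of $\TwoRep(G)$---concretely the snake equations~\eqref{eq:snake}---lets me bend wires freely and transfer data between a 1-morphism and its dual.

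\textbf{Dilation implies CP.} First I would substitute the dilation form~\eqref{eq:stinespring} into the expression~\eqref{eq:cpcond}. Using that the comultiplication of $A$ and the multiplication of $B$ are the cup on $X$ and the cap on $Y$ of the pair-of-pants structure~\eqref{eq:pairofpants}, I would contract the copies of $\tau$ and $\tau^\dagger$ appearing in~\eqref{eq:stinespring} against these cups and caps by repeated use of the snake equations. The expression~\eqref{eq:cpcond} then collapses to the manifestly positive form $h^\dagger \circ h$ for an explicit 2-morphism $h$ built from $\tau$; positivity is immediate. This direction needs nothing beyond careful bookkeeping.

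\textbf{CP implies dilation.} This is the substantial direction. Starting from positivity of the element~\eqref{eq:cpcond} in $\End(A \otimes B)$, I would take its square root to obtain a 2-morphism $g$ with $g^\dagger \circ g$ equal to that element. The crux is to reshape $g$ into a 2-morphism $\tau \colon X \to Y \otimes E$: using the duality cups and caps I would bend every input leg of $g$ except a single $X$-wire over to the output side; a chosen output $Y$-wire then plays the role of $Y$, and the tensor product of all remaining output wires is the environment $E \colon t \to s$ (a genuine 1-morphism of $\TwoRep(G)$, by local semisimplicity and rigidity). Bending by cups and caps is invertible, so $\tau$ carries exactly the same information as $g$, and the dilation need not be minimal. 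It then remains to verify that feeding $\tau$ back into~\eqref{eq:stinespring} returns $f$. Rather than chase this directly, I would note that the resulting morphism $f'$ has, by running the ``dilation $\Rightarrow$ CP'' computation in reverse, the same associated element~\eqref{eq:cpcond} as $f$, namely $g^\dagger \circ g$; since the assignment sending a morphism to the element~\eqref{eq:cpcond} is injective (one recovers the morphism by bending wires back), this forces $f' = f$. \emph{The main obstacle} is precisely this reshaping: one must keep track of which legs are bent, and in which order, so that the environment wire is isolated cleanly and the contraction over $E$ matches the square-root factorization---getting the orientation conventions and snake cancellations right is where the care lies.

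\textbf{Channel refinement.} Finally, with a dilation in hand, I would rewrite the counit-preservation condition~\eqref{eq:tp}, namely $u_B^\dagger \circ f = u_A^\dagger$. Since $u_B^\dagger$ is the pair-of-pants counit (the cap on $Y$), substituting~\eqref{eq:stinespring} and applying the snake equations contracts out the $Y$ and $Y^*$ wires and reduces the identity to $\tau^\dagger \circ \tau = \id_X$---that is, to $\tau$ being an isometry, which is exactly the condition drawn in~\eqref{eq:channeldef}. Reading this equivalence in both directions yields the channel statement and completes the proof.
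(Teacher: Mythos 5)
First, a structural remark: the paper does not prove this proposition at all --- it is imported wholesale from \cite[Thm. 4.11]{Verdon2021} --- so your attempt can only be compared against the technique the paper actually exhibits, namely in its proof of Theorem~\ref{thm:choi}, which is the template for how such dilations are built. Your ``dilation $\Rightarrow$ CP'' direction and the overall architecture are fine, but the converse contains a genuine gap. You take the square root $g$ of the positive element \eqref{eq:cpcond} in $\End(X \otimes X^* \otimes Y \otimes Y^*)$ and propose to ``bend every input leg except a single $X$-wire to the output side,'' then let ``a chosen output $Y$-wire play the role of $Y$.'' In a 2-category there is no symmetry: bending with cups and caps preserves the cyclic order of wires around a box and cannot permute them, and the regions between wires are distinct objects, so ``choosing'' a $Y$-wire and moving it to the leftmost output slot is not an available operation. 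Concretely, if you keep the $X$-input of $g$ as input and bend the remaining inputs, the output reads $X \otimes X^* \otimes Y \otimes Y^* \otimes Y \otimes Y^* \otimes X$, which is not of the form $Y \otimes E$, and the cyclic-order obstruction shows that no sequence of bends repairs this for your choice of retained wires (the $Y$-output of $g$ is not cyclically adjacent to its $X$-input). Even with corrected bookkeeping you would still owe an argument that the contraction in \eqref{eq:stinespring} of the reshaped $g$ against its conjugate over the large environment reproduces $f$: that contraction pattern is not the composition $g \circ g$, and this is precisely the point your injectivity argument assumes rather than proves. The proof of Theorem~\ref{thm:choi} shows the correct route: first transfer positivity of \eqref{eq:cpcond} to the much smaller Choi operator $\widetilde{f} \in \End(Y^* \otimes X)$ (a step you omit entirely, and which needs care, since the two endomorphism algebras sit on non-parallel 1-morphisms and so cannot be compared by naive conjugation), then factor $\widetilde{f} = m^{\dagger} \circ m$ there and bend the single $Y^*$-wire, so that the environment is $E = Y^* \otimes X$ and the form $Y \otimes E$ comes out automatically.

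Second, your channel paragraph proves the wrong statement. In the paper, \eqref{eq:channeldef} is literally the counit-preservation equation $u_B^{\dagger} \circ f = u_A^{\dagger}$ (it is the same diagram as the second equation of \eqref{eq:cohom}), so, once CP $\Leftrightarrow$ dilation is in hand, the channel clause of the proposition is essentially definitional and requires no computation. What you establish instead --- that, given a dilation, counit preservation is equivalent to an isometry condition on $\tau$ --- is the content of Lemma~\ref{lem:chancond}, a separate result also cited from \cite{Verdon2021} rather than proved here. Your derivation of that equivalence is plausible in outline (slide $\tau_*$ around the cap and use faithfulness of bending), but in $\TwoRep(G)$ one must also track the normalisation factors the paper's conventions introduce (compare $\Delta_{X \otimes X^*} = \id_X \otimes n_X^{-2} \otimes \id_{X^*}$ in the proof of Proposition~\ref{prop:graphtochan}); asserting that the condition is exactly $\tau^{\dagger} \circ \tau = \id_X$ without checking those factors is not safe.
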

\noindent
Proposition~\ref{prop:dilation} allows us to study CP morphisms between systems in $\Rep(G)$ by studying their dilations in $\TwoRep(G)$. This may equivalently be seen as the covariant formulation of the Kraus representation for CP maps~\cite[\S{}8.2.3]{Nielsen2010}, which is foundational for quantum information theory; when $G$ is trivial we will show (in Example~\ref{ex:matrixqrel}) how to recover the Kraus maps from~\eqref{eq:stinespring}. One can think of Proposition~\ref{prop:dilation} as splitting a CP morphism open to reveal its internal structure, which by uniqueness of the minimal dilation (Lemma~\ref{lem:partialisom}) is uniquely defined. The utility of this approach will be demonstrated in what follows.
 
We give a couple of lemmas about dilations which we will use later on. 
\begin{lemma}[{\cite[Thm. 4.11]{Verdon2021}}]\label{lem:chancond}
A CP morphism $f: X \otimes X^* \to Y \otimes Y^*$ is a channel iff, for any dilation $\tau: X \to Y \otimes E$, the following morphism is an isometry:
\begin{calign}\label{eq:channelcond}
\includegraphics[scale=.7,valign=c]{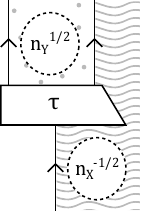}
\end{calign}
\end{lemma}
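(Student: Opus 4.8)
The plan is to invoke Proposition~\ref{prop:dilation}, which already tells us that a CP morphism $f$ with dilation $\tau: X \to Y \otimes E$ is a channel precisely when it preserves the counit, i.e. when $u_B^\dagger \circ f = u_A^\dagger$ as in~\eqref{eq:channeldef}. So the entire task reduces to rewriting this counit-preservation condition as the statement that the morphism displayed in~\eqref{eq:channelcond} is an isometry. Both directions of the claimed equivalence then follow at once, since every step in the rewriting is itself an equivalence.

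First I would substitute the dilation formula~\eqref{eq:stinespring} for $f$ into the left-hand side $u_B^\dagger \circ f$. For the pair-of-pants algebra $Y \otimes Y^*$ the counit $u_B^\dagger$ is, up to the separable-standard normalisation, the cap $\epsilon_Y$, so postcomposing $f$ with $u_B^\dagger$ closes off the outer $Y$--$Y^*$ loop; meanwhile the dilation~\eqref{eq:stinespring} already contracts the environment legs $E$--$E^*$. The key diagrammatic move is to merge these two contractions into a single cap $\epsilon_{Y \otimes E}$ and then to slide the conjugate half $\tau_*$ of the dilation around it. Concretely, the sliding equations~\eqref{eq:sliding} give $\epsilon_{Y\otimes E} \circ (\id_{Y \otimes E} \otimes \tau_*) = \epsilon_X \circ (\tau^\dagger \otimes \id_{X^*})$, and hence, together with the snake equations~\eqref{eq:snake}, the whole composite collapses to $u_A^\dagger \circ \big( (\tau^\dagger \circ \tau) \otimes \id_{X^*} \big)$: the $X$-counit precomposed with the endomorphism $\tau^\dagger \circ \tau \in \End(X)$ acting on the first leg.

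At this point counit-preservation reads $u_A^\dagger \circ \big( (\tau^\dagger \circ \tau) \otimes \id_{X^*}\big) = u_A^\dagger$. The final step is to observe that, because the cap $\epsilon_X$ underlying $u_A^\dagger$ is a nondegenerate pairing between $X$ and $X^*$, the assignment $g \mapsto u_A^\dagger \circ (g \otimes \id_{X^*})$ is injective on $\End(X)$; hence the displayed equality holds iff $\tau^\dagger \circ \tau = \id_X$, which is exactly the assertion that the (normalised) morphism of~\eqref{eq:channelcond} is an isometry. This is the $2$-categorical incarnation of the familiar fact that a CP map $\rho \mapsto \Tr_E(\tau \rho \tau^\dagger)$ is trace-preserving iff its Stinespring map $\tau$ is an isometry.

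I expect the main obstacle to be the diagrammatic bookkeeping in the middle step: correctly bending the conjugate $\tau_*$ around the merged cap via~\eqref{eq:sliding}, and carrying the separable-standard normalisation factors (the $n_X$ and its analogues attached to the counits in~\eqref{eq:pairofpants}) through the computation, so that the resulting condition is an honest isometry equation rather than one weighted by quantum dimensions. The nondegeneracy argument in the last step is routine once the pairing is identified, and the ``for any dilation'' clause needs no extra work, since the reduction is valid for each fixed $\tau$ while the channel property on the left is manifestly dilation-independent.
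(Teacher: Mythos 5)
The paper offers no proof of this lemma to compare against: it is imported verbatim from \cite[Thm. 4.11]{Verdon2021} (the covariant Stinespring theorem) and used as a black box throughout. Judged on its own merits, your argument is correct and is the expected one — essentially the argument of the cited source. The chain of equivalences is sound: by Proposition~\ref{prop:dilation}, being a channel is exactly counit preservation~\eqref{eq:channeldef}; substituting the dilated form~\eqref{eq:stinespring} of $f$ into $u_B^\dagger \circ f$, the nested $E$- and $Y$-contractions merge into the cap for $Y \otimes E$, the sliding equations~\eqref{eq:sliding} convert $\tau_*$ into $\tau^\dagger$, and the composite collapses to the duality pairing of $X$ applied to $(\tau^\dagger \circ \tau) \otimes \id_{X^*}$, up to dimension factors; finally, injectivity of $g \mapsto \epsilon_X \circ (g \otimes \id_{X^*})$ on $\End(X)$ — immediate from the snake equations~\eqref{eq:snake} — turns the resulting equation into the isometry condition. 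Every step is an equivalence, so both directions and the ``for any dilation'' quantifier come for free, exactly as you say. The only caveat is the one you flagged yourself: the morphism in~\eqref{eq:channelcond} is a figure whose content you could not see, and your proof presumes it depicts $\tau$ dressed with the appropriate powers of $n_X$ and the environment dimension so that the factors generated by the separable standard counits of the two pair-of-pants algebras cancel exactly. That presumption is forced (it is the only normalisation for which the statement can hold), but the bookkeeping is precisely where the content of the figure lies, so a complete write-up would need to track those dimension elements explicitly rather than defer them.
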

\begin{lemma}[{\cite[Thm. 4.11]{Verdon2021}}]\label{lem:partialisom}
Dilations are unique up to partial isometry on the environment. That is, if $f: X \otimes X^* \to Y \otimes Y^*$ is a CP morphism and $\tau_1: X \to Y \otimes E_1$ and $\tau_2: X \to Y \otimes E_2$ are dilations, there exists a partial isometry $\alpha: E_1 \to E_2$ such that:
\begin{align*}
(\id_Y \otimes \alpha) \circ \tau_1 = \tau_2 
&&
(\id_Y \otimes \alpha^{\dagger}) \circ \tau_2 = \tau_1
\end{align*}
In particular, the dilation $\tau_{\textrm{min}}: X \to Y \otimes E_{\textrm{min}}$ minimising the quantum dimension of the environment is unique up to a unitary $\alpha$ on the environment. The dilation is minimal iff the following morphism $Y^* \otimes X \to E$ possesses a right inverse:
\begin{calign}
\includegraphics[scale=.7,valign=c]{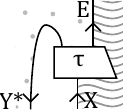}
\end{calign}
\end{lemma}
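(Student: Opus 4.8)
The plan is to reduce the statement to the uniqueness, up to partial isometry, of factorisations of a single fixed positive 2-morphism, and to pass between dilations and such factorisations by bending the $Y$-leg down. Given a dilation $\tau: X \to Y \otimes E$, set
\[
\kappa_\tau := (\eta_Y^\dagger \otimes \id_E) \circ (\id_{Y^*} \otimes \tau) : Y^* \otimes X \to E,
\]
which is exactly the morphism appearing in the final display of the lemma. By the snake equations~\eqref{eq:snake} this assignment is invertible, with $\tau = (\id_Y \otimes \kappa_\tau) \circ (\epsilon_Y^\dagger \otimes \id_X)$, so $\tau \mapsto \kappa_\tau$ is a bijection between dilations with environment $E$ and 2-morphisms $Y^* \otimes X \to E$.

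Next I would show that the hypothesis ``$\tau_1$ and $\tau_2$ dilate the same $f$'' is equivalent to the single identity $\kappa_{\tau_1}^\dagger \circ \kappa_{\tau_1} = \kappa_{\tau_2}^\dagger \circ \kappa_{\tau_2}$ in $\End(Y^* \otimes X)$. Starting from the expression~\eqref{eq:stinespring} for $f$ in terms of $\tau$, one bends the two output legs $Y,Y^*$ back down onto the input legs $X,X^*$ and applies the snake equations, so that the cap contracting the environment $E$ in~\eqref{eq:stinespring} becomes the composite $\kappa_\tau^\dagger \circ \kappa_\tau$. Writing $P := \kappa_{\tau_1}^\dagger \circ \kappa_{\tau_1}$, we then have two factorisations $P = \kappa_{\tau_1}^\dagger \circ \kappa_{\tau_1} = \kappa_{\tau_2}^\dagger \circ \kappa_{\tau_2}$ of the same positive element, with $\kappa_{\tau_i}: Y^* \otimes X \to E_i$.

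Now I would invoke polar decomposition in the $C^*$-2-category. Since $\End(Y^* \otimes X)$ is a finite-dimensional $C^*$-algebra and projections split, each $\kappa_{\tau_i}$ factors as $\kappa_{\tau_i} = u_i \circ \sqrt{P}$, with $u_i: Y^* \otimes X \to E_i$ a partial isometry whose initial projection $u_i^\dagger \circ u_i$ is the support projection of $P$. Setting $\alpha := u_2 \circ u_1^\dagger: E_1 \to E_2$ gives a partial isometry (indeed $\alpha^\dagger \circ \alpha = u_1 \circ u_1^\dagger$), and $u_1^\dagger \circ u_1 \circ \sqrt{P} = \sqrt{P}$ yields $\alpha \circ \kappa_{\tau_1} = \kappa_{\tau_2}$ and symmetrically $\alpha^\dagger \circ \kappa_{\tau_2} = \kappa_{\tau_1}$. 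Un-bending via the bijection of the first step (apply $\id_Y \otimes (-)$ and precompose with $\epsilon_Y^\dagger \otimes \id_X$) turns these into the required equations $(\id_Y \otimes \alpha) \circ \tau_1 = \tau_2$ and $(\id_Y \otimes \alpha^\dagger) \circ \tau_2 = \tau_1$.

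Finally, for minimality: the range projection $u_\tau \circ u_\tau^\dagger \in \End(E)$ of the polar part of $\kappa_\tau$ has quantum trace $\Tr(u_\tau \circ u_\tau^\dagger) = \Tr(u_\tau^\dagger \circ u_\tau)$ equal to the trace of the support projection of $P$, a quantity independent of the chosen dilation. Since $d(E) = \Tr(\id_E) \geq \Tr(u_\tau \circ u_\tau^\dagger)$ by positivity of the trace, the environment dimension is bounded below by this quantity, with equality (by faithfulness of the trace) precisely when $u_\tau \circ u_\tau^\dagger = \id_E$, i.e. when $u_\tau$ is a coisometry; equivalently when $\kappa_\tau$ is surjective and hence admits a right inverse, which is the stated criterion. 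For two minimal dilations the $u_i$ are then unitaries from the splitting of the support projection onto $E_i$, so $\alpha = u_2 \circ u_1^\dagger$ is unitary, giving uniqueness up to unitary. The main obstacle is the second step: carefully verifying, purely diagrammatically, that bending $f$ produces exactly $\kappa_\tau^\dagger \circ \kappa_\tau$, so that the content collapses to the elementary uniqueness of positive-operator factorisations; once that translation is in place the remaining steps are routine $C^*$-algebraic manipulations.
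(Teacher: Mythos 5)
A preliminary remark: this lemma is stated in the paper without proof (it is imported from~\cite{Verdon2021}), so the only comparison available is against the machinery the paper itself develops. Your argument is the standard one and meshes with that machinery: bending the $Y$-leg is a linear bijection between dilations $\tau: X \to Y \otimes E$ and 2-morphisms $\kappa_\tau: Y^* \otimes X \to E$; the key identity $\widetilde{f} = \kappa_\tau^\dagger \circ \kappa_\tau$, which you defer as ``the main obstacle,'' is in fact exactly what the paper's own proof of Theorem~\ref{thm:choi} establishes (its final diagram is precisely $\kappa_\tau^\dagger \circ \kappa_\tau$), so that step is safe. Polar decomposition $\kappa_{\tau_i} = u_i \circ \sqrt{\widetilde{f}}$ with $u_i^\dagger \circ u_i = s(\widetilde{f})$ is available because hom-spaces are finite-dimensional $C^*$-algebra modules and one can use the pseudo-inverse of $\sqrt{\widetilde{f}}$ on its support; your verification that $\alpha = u_2 \circ u_1^\dagger$ is a partial isometry carrying $\tau_1$ to $\tau_2$ and back is correct. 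One thing you use silently and should state: the cyclicity $\Tr(u_\tau \circ u_\tau^\dagger) = \Tr(u_\tau^\dagger \circ u_\tau)$ for a 2-morphism between \emph{distinct} 1-morphisms, which follows from the sliding equations~\eqref{eq:sliding} but is not literally the tracial property of a single algebra $\End(X)$.

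The one genuine gap is in the minimality paragraph. What you prove is: every dilation satisfies $d(E) \geq \Tr(s(\widetilde{f}))$, with equality iff the polar part $u_\tau$ is a coisometry iff $\kappa_\tau$ admits a right inverse. But ``minimal'' in the lemma means minimising $d(E)$ over the set of dilations that exist; your equivalences identify ``minimal'' with ``attains $\Tr(s(\widetilde{f}))$'' only if some dilation actually attains that bound. Without attainment, a dimension-minimising dilation could a priori sit strictly above $\Tr(s(\widetilde{f}))$, and then neither the right-inverse criterion nor the unitarity of $\alpha$ between two minimal dilations follows. The fix is one line using tools already in the paper: by local semisimplicity, split the projection $s(\widetilde{f})$ to get an isometry $\iota: E_{\min} \to Y^* \otimes X$ with $\iota \circ \iota^\dagger = s(\widetilde{f})$, set $\kappa := \iota^\dagger \circ \sqrt{\widetilde{f}}$ and unbend to a morphism $\tau_{\min}: X \to Y \otimes E_{\min}$. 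Then $\kappa^\dagger \circ \kappa = \widetilde{f}$, so $\tau_{\min}$ is a dilation, and its polar part is $\iota^\dagger$, which is a coisometry, so $d(E_{\min}) = \Tr(s(\widetilde{f}))$ and the bound is attained. With that inserted, your chain of equivalences closes and the uniqueness-up-to-unitary claim for minimal dilations follows as you state it.
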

\noindent
We also have the following covariant generalisation of Choi's characterisation of completely positive maps~\cite[Thm. 2]{Choi1975}, which is fundamental to this work. 
\begin{theorem}[{Covariant Choi's theorem~\cite[Thm. 4.13]{Verdon2021}}]\label{thm:choi}
Let $X \otimes X^*$ and $Y \otimes Y^*$ be systems. Then there is a bijective correspondence (in fact, an isomorphism of convex cones, in the sense that it preserves positive linear combinations) between:
\begin{itemize}
\item CP morphisms $f: X \otimes X^* \to Y \otimes Y^*$.
\item Positive elements $\widetilde{f} \in \End(Y^* \otimes X)$.
\end{itemize}
The correspondence is given as follows:
\begin{calign}\label{eq:choi}
\includegraphics[scale=.7,valign=c]{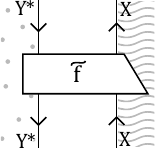}
~~~:=~~~
\includegraphics[scale=.7,valign=c]{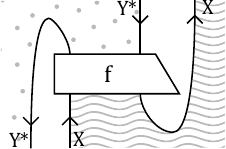}
\\\nonumber
\includegraphics[scale=.7,valign=c]{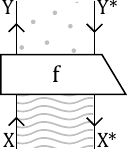}
~~~:=~~~
\includegraphics[scale=.7,valign=c]{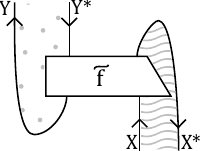}
\end{calign}
\end{theorem}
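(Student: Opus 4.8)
The plan is to reduce both directions of the correspondence to the covariant Stinespring theorem (Proposition~\ref{prop:dilation}), using the $C^*$-structure of the finite-dimensional algebra $\End(Y^* \otimes X)$ to factorise positive elements. Before touching positivity at all, I would first observe that the two assignments $f \mapsto \widetilde{f}$ and $\widetilde{f} \mapsto f$ displayed in~\eqref{eq:choi} are manifestly linear (each diagram depends linearly on the box inserted into it), and that they are mutually inverse: composing the two diagrams and straightening the resulting cups and caps with the snake equations~\eqref{eq:snake} returns the identity. This is a routine graphical manipulation, and it establishes that~\eqref{eq:choi} is a linear isomorphism of the ambient 2-morphism spaces. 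The real content of the theorem is then that this isomorphism restricts to a bijection between the CP cone and the positive cone; since linearity is already in hand, such a restriction is automatically a convex-cone isomorphism.

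For the forward direction, suppose $f: X \otimes X^* \to Y \otimes Y^*$ is a CP morphism. By Proposition~\ref{prop:dilation} it admits a dilation $\tau: X \to Y \otimes E$, so that $f$ is given by~\eqref{eq:stinespring}. I would substitute this expression for $f$ into the diagram defining $\widetilde{f}$ in~\eqref{eq:choi} and simplify using the snake equations. The $Y$-legs that are bent round in the Choi construction cancel against the cups and caps of the Stinespring form, and the environment wire $E$ is contracted against its dagger; what remains is a diagram of the shape $g^{\dagger} \circ g$ for an explicit 2-morphism $g: Y^* \otimes X \to E$ built from $\tau$. Since $\End(Y^* \otimes X)$ is a $C^*$-algebra, $g^{\dagger} \circ g$ is a positive element, so $\widetilde{f} \geq 0$.

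For the reverse direction, suppose $\widetilde{f} \in \End(Y^* \otimes X)$ is positive. Because this is a finite-dimensional $C^*$-algebra, $\widetilde{f}$ has a positive square root, and in particular factorises as $\widetilde{f} = g^{\dagger} \circ g$ with $g := \sqrt{\widetilde{f}} \in \End(Y^* \otimes X)$. Bending the $Y^*$-leg of $g$ downwards with a cup produces a 2-morphism $\tau: X \to Y \otimes E$ with environment $E := Y^* \otimes X$. I would then check, again by the snake equations, that feeding this $\tau$ into the Stinespring formula~\eqref{eq:stinespring} reproduces exactly the morphism $f$ assigned to $\widetilde{f}$ by the inverse map in~\eqref{eq:choi}. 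Hence $f$ admits a dilation, and so is CP by Proposition~\ref{prop:dilation}. Together with the forward direction and the already-established linearity and bijectivity of~\eqref{eq:choi}, this shows the correspondence is an isomorphism of convex cones.

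The main obstacle I expect is bookkeeping rather than conceptual: matching the explicit wire-bending in~\eqref{eq:choi} to the Stinespring data of~\eqref{eq:stinespring} requires careful and repeated use of the snake and sliding equations~\eqref{eq:snake},~\eqref{eq:sliding}, and one must take care that the factorisation $\widetilde{f} = g^{\dagger} \circ g$ and the bending of legs respect the orientation conventions for duals. The key structural inputs — that CP morphisms are exactly those admitting a dilation, and that positive elements of a finite-dimensional $C^*$-algebra factor as $g^{\dagger} \circ g$ — do all of the genuine work.
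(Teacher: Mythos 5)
Your proposal is correct and follows essentially the same route as the paper's proof: the forward direction substitutes a Stinespring dilation (Proposition~\ref{prop:dilation}) into the Choi formula to exhibit $\widetilde{f}$ as $g^{\dagger}\circ g$, and the reverse direction factorises a positive $\widetilde{f}$ as $m^{\dagger}\circ m$ and bends wires to produce a dilation with environment $Y^*\otimes X$ (the paper takes an arbitrary such factorisation rather than the positive square root, an immaterial difference). Your preliminary observation that the two assignments in~\eqref{eq:choi} are mutually inverse linear maps, which the paper leaves implicit, is a worthwhile explicit addition but not a different argument.
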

\begin{proof}
Let $\tau: X \to Y \otimes E$ be a dilation of $f$, then:
\begin{calign}
\includegraphics[scale=.7,valign=c]{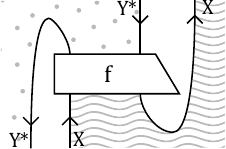}
~~=~~
\includegraphics[scale=.7,valign=c]{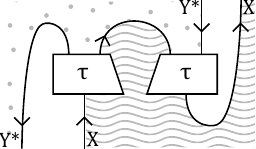}
~~=~~
\includegraphics[scale=.7,valign=c]{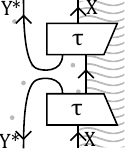}
\end{calign}
The last diagram is clearly the composition of a 2-morphism with its dagger and is therefore positive.

In the other direction, let $\tilde{f} \in \End(Y^* \otimes X)$ be positive. Then we can choose $m \in \End(Y^* \otimes X)$ such that $\tilde{f} = m^{\dagger} \circ m$, and transposing the relevant wires we obtain a dilation for $f$ with environment $Y^* \otimes X$:
\begin{align*}
\includegraphics[scale=.7,valign=c]{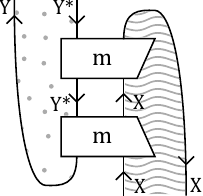}
~~=~~
\includegraphics[scale=.7,valign=c]{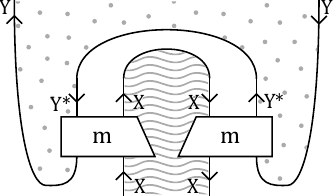}
\end{align*}
\end{proof}
\begin{notation}
Throughout this work we use the notation of Theorem~\ref{thm:choi}; that is, we write a positive element $\widetilde{f} \in \End(Y^* \otimes X)$ and its corresponding CP morphism $f: X \otimes X^* \to Y \otimes Y^*$ with the same latin letter, but use a tilde to indicate that we are referring to the positive element rather than the CP morphism.
\end{notation}
\noindent
Lastly, we consider some special examples of CP morphisms.
\begin{definition}
Let $X \otimes X^*$ and $Y \otimes Y^*$ be systems. A morphism $f: X \otimes X^* \to Y \otimes Y^*$ is called a \emph{$*$-homomorphism} if it obeys the following equations:
\begin{calign}\label{eq:hom}
\includegraphics[scale=.6,valign=c]{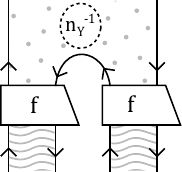}
~~=~~
\includegraphics[scale=.6,valign=c]{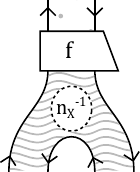}
&&
\includegraphics[scale=.6,valign=c]{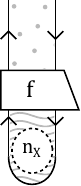}
~~=~~
\includegraphics[scale=.6,valign=c]{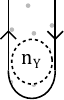}
&&
\includegraphics[scale=.6,valign=c]{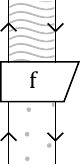}
~~=~~
\includegraphics[scale=.6,valign=c]{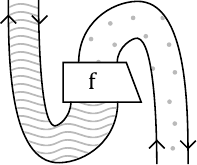}
\end{calign}
It is called a \emph{$*$-cohomomorphism} if it obeys the following equations:
\begin{calign}\label{eq:cohom}
\includegraphics[scale=.6,valign=c]{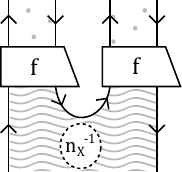}
~~=~~
\includegraphics[scale=.6,valign=c]{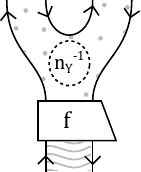}
&&
\includegraphics[scale=.6,valign=c]{pictures/covstinespring/starcohom21.pdf}
~~=~~
\includegraphics[scale=.6,valign=c]{pictures/covstinespring/starcohom22.pdf}
&&
\includegraphics[scale=.6,valign=c]{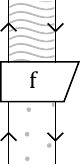}
~~=~~
\includegraphics[scale=.6,valign=c]{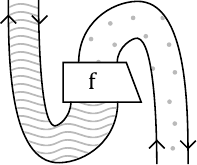}
\end{calign}
We will occasionally consider $*$-homomorphisms and $*$-cohomomorphisms which do not obey the second equations of~\eqref{eq:hom}\eqref{eq:cohom}. We call these \emph{non-unital} and \emph{non-counital} respectively. When we do not use these adjectives we mean that all equations are obeyed.
\end{definition}
\noindent
It is straightforward to show that $*$-homomorphisms and $*$-cohomomorphisms (even non-unital $*$-homomorphisms and non-counital $*$-cohomomorphisms) are CP morphisms. A $*$-cohomomorphism is additionally a channel (the second equation of~\eqref{eq:cohom} is precisely~\eqref{eq:channeldef}). The dagger of a $*$-homomorphism is a $*$-cohomomorphism, and vice versa.
\begin{remark}
As the name suggests, a $*$-homomorphism between systems is precisely a covariant $*$-homomorphism between the associated $G$-$C^*$-algebras. A $*$-cohomomorphism between systems is the Hermitian adjoint of a covariant $*$-homomorphism between the associated $G$-$C^*$-algebras, which are equipped with the inner product induced by the separable standard $G$-invariant functional. 
\end{remark}

\section{Covariant quantum relations and quantum $G$-graphs}\label{sec:relsandgraphs}

We can now define covariant quantum relations and quantum $G$-graphs using only categorical structure. We will motivate the definitions by applications to zero-error communication.

\subsection{Covariant quantum relations}

We will first define covariant quantum relations.  In classical information theory, a finite discrete memoryless channel $f$ with input alphabet $I$ and output alphabet $J$, written $f: I \to J$, is specified by a \emph{stochastic matrix} $(p_{ji})_{j \in J, i \in I}$; here $p_{ji}$ is the probability that the channel maps the input $i$ to the output $j$, and these probabilities satisfy $\sum_j p_{ji} = 1$. In the zero-error setting we are not interested in the magnitude of the transition probabilities, only in the possibility of transition. We therefore need only consider the relation $R \subset I \times J$ underlying the channel, defined by $R := \{(i,j)~|~p_{ji} \neq 0\} $.

Moving from a classical channel to its underlying relation induces a functor from the category $\Stoch$ of finite sets and stochastic matrices to the category $\Rel$ of finite sets and relations. We can generalise from stochastic matrices to matrices whose entries are positive real numbers; the same construction yields a functor $\Mat_{\mathbb{R}_{\geq 0}} \to \Rel$. The categories $\Mat_{\mathbb{R}_{\geq 0}}$ and $\Rel$ have daggers (given by the matrix transpose and the converse relation) such that this functor is unitary. Clearly, every relation arises from some morphism in $\Mat_{\mathbb{R}_{\geq 0}}$.

In order to quantise this construction, we first recall notions about supports and kernels for 2-morphisms in a semisimple $C^*$-2-category. 
\begin{definition}\label{def:suppann}
In any finite-dimensional $C^*$-algebra a partial order on projections is given by $p_1 \leq p_2$ iff $p_1p_2 = p_1 = p_2p_1$.

Let $X, Y: r \to s$ be 1-morphisms in $\TwoRep(G)$. For any $f: X \to Y$ the set $\mathcal{R}_f := \{a \in \End(X) ~|~ f \circ a = 0\}$ is a right ideal in $\End(X)$ which we call the \emph{right annihilator} of $f$; the \emph{left annihilator} $\mathcal{L}_f \subseteq \End(Y)$ is a left ideal defined by $\mathcal{L}_f:= \{a \in \End(Y) ~|~ a \circ f = 0\}$. By~\cite[Prop. 1.10.1]{Sakai2012} there are unique projections $e_{R,f} \in \End(X)$, $e_{L,f} \in \End(Y)$ such that $\mathcal{R}_f = e_{R,f} \circ \End(X)$ and $\mathcal{L}_f =  \End(Y) \circ e_{L,f}$. We call the projections $s_L(f):= \id_Y - e_{L,f}$ and $s_R(f) := \id_X - e_{R,f}$ the \emph{left support} and \emph{right support} of $f$, respectively. 

The projection $s_L(f)$ (resp. $s_R(f)$) may equivalently be defined as the least projection of all the projections $p \in \End(Y)$ (resp. $p \in \End(X))$ such that $p \circ f = f$ (resp. $f \circ p = f$). If $f$ is self-adjoint, i.e. $f^{\dagger} = f$, then $s_L(f) = s_R(f)$ and we call this projection the \emph{support} $s(f)$. We call the projection $\Ann(f):= 1-s(f)$ the \emph{annihilator}.
\end{definition}
\noindent
Recall the tilde notation for the Choi isomorphism (Theorem~\ref{thm:choi}).
\begin{definition}
A \emph{quantum relation} $p: X \otimes X^* \to Y \otimes Y^*$ is the CP morphism corresponding to a projection $\widetilde{p} \in \End(Y^* \otimes X)$. 

The \emph{underlying quantum relation} of a CP morphism $f: X \otimes X^* \to Y \otimes Y^*$ is defined by the projection $s(\tilde{f}) \in \End(Y^* \otimes X)$.

Let $p,q: X \otimes X^* \to Y \otimes Y^*$ be quantum relations; we say that $p \leq q$ if $\widetilde{p} \leq \widetilde{q}$.
\end{definition}
\noindent
Let us quickly show that we recover the standard notions of relation and quantum relation~\cite{Weaver2012,Kornell2020} when the group is trivial. In this case $\TwoRep(G)$ is just the category $\TwoFHilb$ described in~\cite[\S{}8]{Heunen2019}, and the graphical calculus reduces to a calculus for indexed families of linear maps which is fully summarised in~\cite[\S{}2.1]{Verdon2022}; we refer there for the details.
\begin{example}[Classical relations]\label{ex:classrel}
By Gelfand duality a finite set with $n$ elements corresponds to a commutative $C^*$-algebra with $n$ one-dimensional factors. In $\TwoHilb$ this is the system $X \otimes X^*$, where $X: [1] \to{} [n]$ is an indexed family of 1-dimensional Hilbert spaces $(X_{i})_{1 \leq i \leq n}$, $X_{i} \cong \mathbb{C} ~\forall~ j$. Let $X: [1] \to{} [n_1]$ and $Y: [1] \to{} [n_2]$ be two such 1-morphisms. Now let $f: X \otimes X^* \to Y \otimes Y^*$ be a CP morphism, and let $E: [n_2] \to{} [n_1]$ and $\tau: X \to Y \otimes E$ be a dilation of $f$. We can identify $f$ (on the left) with an indexed family of linear maps $X_i \otimes X_i^* \to Y_j \otimes Y_j^*$ (on the right):
\begin{calign}
\includegraphics[scale=.8,valign=c]{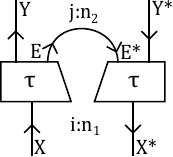}
\qquad \leftrightarrow \qquad 
\includegraphics[scale=.8,valign=c]{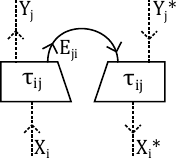}
~~=~~
\includegraphics[scale=.8,valign=c]{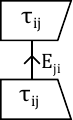}
\end{calign}
Note that when we move to the indexed family the 1-morphisms $X_{i}, (X_j)^*, Y_{j}$ and $(Y_j)^*$ disappear, since we do not depict the one-dimensional Hilbert space (i.e. the tensor unit) in the graphical calculus (we have drawn them using dotted lines on the left-hand side of the equality, but this is just for illustration). We see that each of the $i,j$-indexed morphisms is just a positive real number $f_{ij}:= \tau_{ij}^{\dagger} \circ \tau_{ij}$; the CP map $f$ therefore corresponds to a matrix of positive real numbers $(f_{ij})_{1 \leq i \leq m, 1 \leq j \leq n}$, namely a morphism in $\Mat_{\mathbb{R}_{\geq 0}}$. Now we consider the underlying relation. We observe that $\tilde{f} \in \End(Y^* \otimes X)$ is the following indexed family of linear maps $\mathbb{C} \to \mathbb{C}$:
\begin{calign}
\includegraphics[scale=.8,valign=c]{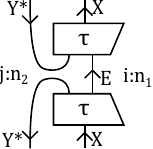}
\qquad \leftrightarrow \qquad 
\includegraphics[scale=.8,valign=c]{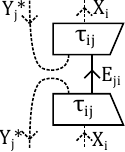}
~~=~~
\includegraphics[scale=.8,valign=c]{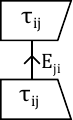}
\end{calign} 
The support of each of these maps is zero if the scalar is zero, and 1 otherwise. The relation $s(\tilde{f})$ therefore tells us which of the $f_{ij}$ are nonzero, as expected. 
\end{example}
\begin{example}[Quantum relations on matrix algebras]\label{ex:matrixqrel}
Let $X,Y: [1] \to{} [1]$ be 2-morphisms in $\TwoHilb$; since $\End([1]) \cong \Hilb$ these correspond to Hilbert spaces, and concretely the systems $X \otimes X^*$ and $Y \otimes Y^*$ are the matrix $C^*$-algebras $B(X)$ and $B(Y)$. 
A relation $B(X) \to B(Y)$ is therefore a projection $p \in \End(Y^* \otimes X)$; since $Y^* \otimes X \cong B(Y,X)$ (where $B(Y,X)$ is our notation for the linear maps $Y \to X$), this corresponds precisely to a subspace of $B(Y,X)$.

This subspace is usually defined in terms of the Kraus operators of the CP map. Let $f: X \otimes X^* \to Y \otimes Y^*$ be a CP morphism. Let $E: [1] \to{} [1]$ and $\tau: X \to Y \otimes E$ be a dilation of $f$. Now pick an orthonormal basis $\{\ket{i}\}$ for $E$ and consider the operators $M_i := (\id_Y \otimes \bra{i}) \circ \tau: X \to Y$. These are the Kraus operators associated with this dilation of $f$ and this choice of orthonormal basis. Now $\tilde{f}$ can be expressed as follows:
\begin{calign}
\includegraphics[scale=.8,valign=c]{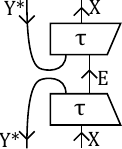}
~~=~~
\sum_i~
\includegraphics[scale=.8,valign=c]{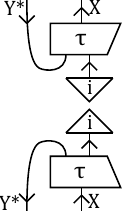}
~~=~~
\sum_i~
\includegraphics[scale=.8,valign=c]{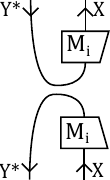}
\end{calign} 
Here for the first equality we inserted a resolution of the identity in terms of the basis $\{\ket{i}\}$. Looking at the final diagram we see that the support of this element of $\End(Y^* \otimes X)$ corresponds to the linear span of the Hermitian adjoints of the Kraus operators associated to the dilation.

(Given the description of quantum relations in the introduction it may be surprising to the reader that we obtained a subspace of $B(Y,X)$ rather than $B(X,Y)$. This is only a conventional issue; the other convention is to define the Choi isomorphism~\eqref{eq:choi} using the other partial transpose.)
\end{example}
\noindent
We now resume our treatment of covariant relations. Systems and quantum relations form a dagger category $\QRel(G)$, which has already been studied in some detail when the group action is trivial~\cite{Kornell2020}. In this category the composite of two quantum relations $p_1: X \otimes X^* \to Y \otimes Y^*$ and $p_2: Y \otimes Y^* \to Z \otimes Z^*$ is defined as the support $s(\widetilde{p}_2 \odot \widetilde{p}_1)$ of the following positive element $\widetilde{p}_2 \odot \widetilde{p}_1 \in \End(Z^* \otimes X)$:
\begin{calign}
\includegraphics[scale=.8,valign=c]{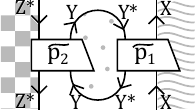}
\end{calign}
Here we have shaded the region corresponding to the target of $Z$ with a checkerboard shading. The identity relation $X \otimes X^* \to X \otimes X^*$, which in anticipation of the next section we call the \emph{discrete quantum confusability graph} $\Delta_{X\otimes X^*}$ on $X \otimes X^*$,  corresponds to the following projection $\widetilde{\Delta_{X \otimes X^*}} \in \End(X^* \otimes X)$: 
\begin{calign}\label{eq:discconfgraph}
\includegraphics[scale=.8,valign=c]{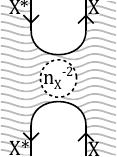}
\end{calign}
\noindent
The dagger $p^{\dagger}: Y \otimes Y^* \to X \otimes X^*$ of a relation $p: X \otimes X^* \to Y \otimes Y^*$ is defined by the following projection in $\End(X^* \otimes Y)$:
\begin{calign}\label{eq:convrel}
\includegraphics[scale=.8,valign=c]{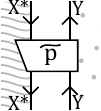}
\end{calign}
(In other words, the dagger of a quantum relation is given by the transpose of its corresponding projection.)
\begin{proposition}\label{prop:relfct}
Moving from a CP morphism to its underlying relation defines a full unitary (i.e. dagger-preserving) functor $\mathfrak{R}: \CP(G) \to \QRel(G)$.
\end{proposition}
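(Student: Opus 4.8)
The plan is to verify in turn that $\mathfrak{R}$ is a functor, that it is full, and that it is unitary; throughout, $\mathfrak{R}$ acts as the identity on objects, so only the behaviour on morphisms is at issue. Fullness is the easiest point and I would dispatch it first: a quantum relation $p \colon X \otimes X^* \to Y \otimes Y^*$ is \emph{by definition} the CP morphism whose Choi element $\widetilde{p}$ is a projection, so it is already an object of the domain $\CP(G)$, and since the support of a projection is the projection itself we have $\mathfrak{R}(p) = s(\widetilde{p}) = \widetilde{p} = p$. Thus every relation lies in the image of $\mathfrak{R}$ on the relevant Hom-set.

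For functoriality I would treat identities and composition separately. Preservation of identities amounts to the single diagrammatic check that the support of the Choi element $\widetilde{\id_{X \otimes X^*}}$, computed from~\eqref{eq:choi}, is exactly the projection~\eqref{eq:discconfgraph} defining the discrete confusability graph $\Delta_{X \otimes X^*}$; any positive scalar appearing in $\widetilde{\id_{X \otimes X^*}}$ is absorbed harmlessly by passing to the support. Composition is the substantive part. Writing $\odot$ for the pairing on Choi elements used to define composition in $\QRel(G)$, I would first record that $\odot$ computes composition of CP morphisms, i.e.\ $\widetilde{g \circ f} = \widetilde{g} \odot \widetilde{f}$ (a direct bending-of-wires computation; a positive-scalar discrepancy, should the chosen normalisations produce one, would again be irrelevant below). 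Unwinding the definitions, preservation of composition is then the statement
\[
s\big(\widetilde{g} \odot \widetilde{f}\big) \;=\; s\big(s(\widetilde{g}) \odot s(\widetilde{f})\big).
\]

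I expect this last identity to be the main obstacle, and I would prove it as a self-contained lemma about supports. The two ingredients are: (i) $\odot$ is bilinear and monotone on positive elements, which follows because $a \mapsto b \odot a$ (for fixed positive $b$) is, under the convex-cone isomorphism of Theorem~\ref{thm:choi}, post-composition by a fixed CP morphism, an operation that is additive and preserves the positive cone, hence order-preserving (and symmetrically for the other argument); and (ii) in the finite-dimensional $C^*$-algebra $\End(Y^* \otimes X)$ every positive element $a$ is sandwiched by its support, $\lambda\, s(a) \leq a \leq \mu\, s(a)$ for some $0 < \lambda \leq \mu$ (its least nonzero and greatest eigenvalues). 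Applying monotonicity of $\odot$ in each argument to the sandwiches for $\widetilde{f}$ and $\widetilde{g}$ yields $c_1\,\big(s(\widetilde{g}) \odot s(\widetilde{f})\big) \leq \widetilde{g} \odot \widetilde{f} \leq c_2\,\big(s(\widetilde{g}) \odot s(\widetilde{f})\big)$ with $c_1, c_2 > 0$, and two positive elements trapped between positive multiples of one another have equal support (each inequality forces one inclusion of kernels). This gives the displayed identity and hence functoriality.

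Finally, for unitarity I would show $\mathfrak{R}(f^\dagger) = \mathfrak{R}(f)^\dagger$ by combining two facts. First, taking the dagger of a CP morphism corresponds on Choi elements to the transpose (mate), $\widetilde{f^\dagger} = (\widetilde{f})^*$, which is a short diagrammatic computation and matches the definition~\eqref{eq:convrel} of the relational dagger as the transpose of the projection. Second, the transpose $(-)^* \colon \End(Y^* \otimes X) \to \End(X^* \otimes Y)$ is a dagger-preserving anti-isomorphism of $C^*$-algebras (it reverses $\circ$ and satisfies $(a^*)^\dagger = (a^\dagger)^*$), so it preserves positivity and commutes with the support operation: $s\big((\widetilde{f})^*\big) = \big(s(\widetilde{f})\big)^*$. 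Combining these, $\mathfrak{R}(f^\dagger) = s(\widetilde{f^\dagger}) = s\big((\widetilde{f})^*\big) = \big(s(\widetilde{f})\big)^* = \mathfrak{R}(f)^\dagger$, which completes the proof.
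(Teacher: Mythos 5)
Your proof is correct, and it covers all four obligations (identities, composition, unitarity, fullness). On fullness, preservation of identities, and the reduction of unitarity to $\widetilde{f^{\dagger}} = (\widetilde{f})^*$ together with $s\bigl((\widetilde{f})^*\bigr) = s(\widetilde{f})^*$, you follow essentially the paper's path; the one point to flag is that the ``short diagrammatic computation'' giving $\widetilde{f^{\dagger}} = (\widetilde{f})^*$ is not pure wire-bending --- for a general 2-morphism the dagger of the Choi element is not its transpose, so this step genuinely uses CP-ness of $f$ (the paper isolates this as~\eqref{eq:cpdaggertransp}, proved from a dilation). Where you depart from the paper is the composition step $s(\widetilde{g} \odot \widetilde{f}) = s\bigl(s(\widetilde{g}) \odot s(\widetilde{f})\bigr)$, which is indeed the substantive part. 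The paper argues at the level of right annihilators: one inclusion via spectral decompositions of $\widetilde{f}, \widetilde{g}$ and the auxiliary fact~\eqref{eq:positivesumkernel} that $\sum_i m_i^{\dagger} m_i\, a = 0$ forces every $m_i a = 0$; the other via factorisations $\widetilde{f} = m_1^{\dagger} \circ m_1$, $\widetilde{g} = m_2^{\dagger} \circ m_2$ and the fact that the support of a self-adjoint element is a polynomial in that element. You instead exploit the order structure: $\odot$ is bilinear and cone-preserving (being, under the convex-cone isomorphism of Theorem~\ref{thm:choi}, composition with a fixed CP morphism), hence monotone in each argument; the finite-dimensional sandwich $\lambda\, s(a) \le a \le \mu\, s(a)$ then traps $\widetilde{g} \odot \widetilde{f}$ between positive multiples of $s(\widetilde{g}) \odot s(\widetilde{f})$, and monotonicity of supports under $\le$ finishes. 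Both arguments are sound. Yours is shorter and more conceptual, isolating a reusable order-theoretic principle (linear cone-preserving operations respect supports via the sandwich trick); the paper's longer route stays entirely inside the annihilator formalism in which supports were actually defined (Definition~\ref{def:suppann}), and its by-product~\eqref{eq:positivesumkernel} is reused later, e.g.\ in the proof of Proposition~\ref{prop:confgraphfromchannel}, which your approach would not supply.
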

\begin{proof}
Let $f: X \otimes X^* \to Y \otimes Y^*$, $g: Y \otimes Y^* \to Z \otimes Z^*$ be CP morphisms. We need to show that composition is preserved, i.e. that $\mathfrak{R}(g \circ f) = \mathfrak{R}(g) \circ \mathfrak{R}(f)$. This comes down to equality of the supports of the following positive elements $t_1, t_2 \in \End(Z^* \otimes X)$:
\begin{calign}
&\includegraphics[scale=.8,valign=c]{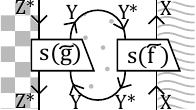}
&&
&\includegraphics[scale=.8,valign=c]{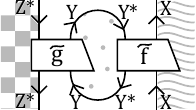}
\\
&t_1:= s(\widetilde{g}) \odot s(\widetilde{f})
&&
&t_2:= \widetilde{g} \odot \widetilde{f}
\end{calign}
By Definition~\ref{def:suppann}, equality of the supports can be rephrased in terms of the annihilators. Indeed, $s(t_1) = s(t_2)$ if and only if there is an equality of right annihilators $\mathcal{R}_{t_1} = \mathcal{R}_{t_2}$. 

Our argument depends on the following lemma. Let $\{f_i\}$ be a finite set of positive elements of an f.d.\ $C^*$-algebra $A$, and let $m_i \in A$ be such that $f_i = m_i^{\dagger} m_i$. Then we claim that, for any $a \in A$:
\begin{equation}\label{eq:positivesumkernel}
\sum_i f_i a = 0 \Leftrightarrow m_i a = 0~~\forall i
\end{equation} 
The leftwards implication is obvious. To show the rightwards implication we use the existence of a positive faithful trace $\Tr: A \to \mathbb{C}$. The derivation is as follows:
$$
\sum_i f_i a = 0 \Rightarrow \sum_i f_i a a^{\dagger} = 0 \Rightarrow \sum_i \Tr( a^{\dagger} f_i a) = 0 \Rightarrow  m_i a = 0~~\forall i
$$
Here the first implication is clear; the second implication is by traciality; and the final implication is by positivity and faithfulness of the trace.

Now we can show equality of the annihilators. First we show $\mathcal{R}_{t_1} \subseteq \mathcal{R}_{t_2}$: in other words, if $t_1 a = 0$ for some $a \in \End(Z^* \otimes X)$, then $t_2 a = 0$ also. We spectrally decompose $\widetilde{f}$ to obtain $\widetilde{f} = \sum_i \lambda_i p_i$, where $\{\lambda_i\}$ are positive and $p_i \in s(\widetilde{f}) \circ \End(Y^* \otimes X) \circ s(\widetilde{f})$ are orthogonal projections; we similarly obtain a decomposition $\widetilde{g} = \sum_j \sigma_j q_j$, where $\{\sigma_j\}$ are positive and $q_j \in s(\widetilde{g}) \circ \End(Z^* \otimes Y) \circ s(\widetilde{g})$ are orthogonal projections. It follows that $t_2 = \sum_{i,j} \lambda_i \sigma_j (q_j \odot p_i)$. By~\eqref{eq:positivesumkernel}, $t_2 a = 0$ iff $(q_j \odot p_i) a = 0$ for all $i,j$.  Now since $p_i \leq s(\widetilde{f})$ for all $i$, it follows that, for any $i$, $s(\widetilde{f}) = p_i + (s(\widetilde{f}) - p_i)$; here both of the summands are projections. Likewise, $s(\widetilde{g}) = q_j + (s(\widetilde{g}) - q_j)$ for any $j$. Then for any $i,j$ we can expand $t_1$ as a sum of positive elements:
\begin{align*}
t_1 = (q_j \odot p_i) + ((s(\widetilde{g}) - q_j) \odot p_i) 
+ (q_j \odot (s(\widetilde{f}) - p_i)) + ((s(\widetilde{g}) - q_j) \odot (s(\widetilde{f}) - p_i))
\end{align*}
So, by~\eqref{eq:positivesumkernel}, $t_1 a = 0$ implies that $(q_j \odot p_i) a = 0$ for all $i,j$; and therefore $t_2 a = 0$ also.

To see that $\mathcal{R}_{t_2} \subseteq \mathcal{R}_{t_1}$,  suppose that $t_2 a = 0$ for some $a \in \End(Z^* \otimes X)$. Since $\widetilde{f}$ and $\widetilde{g}$ are positive, we have $\widetilde{f} = m_1^{\dagger} \circ m_1$ and $\widetilde{g} = m_2^{\dagger} \circ m_2$ for some $m_1 \in \End(Y^* \otimes X)$ and $m_2 \in \End(Z^* \otimes Y)$. It follows from~\eqref{eq:positivesumkernel} that $(m_2 \nabla m_1) a = 0$, where $(m_2 \nabla m_1)$ is defined as follows:
\begin{calign}
\includegraphics[scale=.8,valign=c]{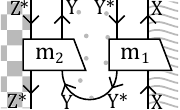}
\end{calign}
For a self-adjoint element $x$ in an f.d.\  $C^*$-algebra, the support  $s(x)$  is an element of the  commutative $C^*$-subalgebra generated by $x$~\cite[Prop. 1.10.4]{Sakai2012}. In particular, there is some finite $n \in \mathbb{N}$ such that $s(x) = \sum_{i=1}^{n} \lambda_i x^{i}$ for scalars $\{\lambda_i \in \mathbb{R}\}$. It follows that 
$t_1 = t (m_2 \nabla m_1)$ for some $t \in \End(Z^* \otimes X)$, and thus $t_1 a = 0$. We have therefore shown that $\mathfrak{R}(g \circ f) = \mathfrak{R}(g) \circ \mathfrak{R}(f)$. 

We must show that the identity morphisms are preserved. By definition, $\mathfrak{R}(\id_{X \otimes X^*})$ is the support of the following positive element $\widetilde{\id_{X \otimes X^*}} \in \End(X^* \otimes X)$:
\begin{calign}
\includegraphics[scale=.8,valign=c]{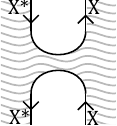}
\end{calign}
It is easy to see that this support is the projection $\widetilde{\Delta_{X \otimes X^*}}$ for the discrete confusability graph (by comparing the annihilators, for instance).

For unitarity we must show that $\mathfrak{R}(f^{\dagger}) = \mathfrak{R}(f)^{\dagger}$. We observe that the transpose of a CP morphism is equal to its dagger, i.e. $f^* = f^{\dagger}$:
\begin{calign}\label{eq:cpdaggertransp}
\includegraphics[scale=.8,valign=c]{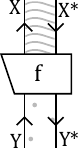}
~~=~~
\includegraphics[scale=.8,valign=c]{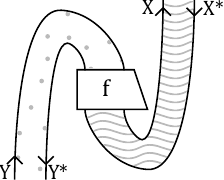}
~~=~~
\includegraphics[scale=.8,valign=c]{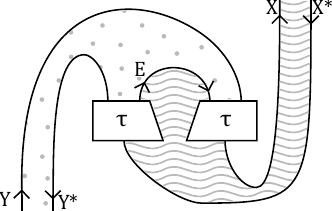}
~~=~~
\includegraphics[scale=.8,valign=c]{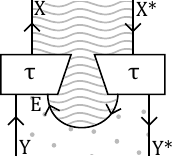}
~~=~~
\includegraphics[scale=.8,valign=c]{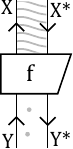}
\end{calign}
By definition, $\mathfrak{R}(f^{\dagger})$ is the support of the positive element $\widetilde{f^{\dagger}} \in \End(X^* \otimes Y)$ corresponding to $f^{\dagger}$ under Choi's theorem:
\begin{calign}
\includegraphics[scale=.8,valign=c]{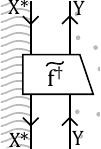}
~~=~~
\includegraphics[scale=.8,valign=c]{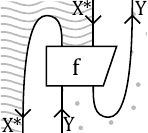}
~~=~~
\includegraphics[scale=.8,valign=c]{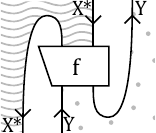}
~~=~~
\includegraphics[scale=.8,valign=c]{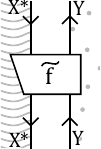}
\end{calign}
Here the second equality is by~\eqref{eq:cpdaggertransp}. In the last diagram we see $(\widetilde{f})^*$, whose support is clearly $s(\widetilde{f})^*$; but this was the definition of $\mathfrak{R}(f)^{\dagger}$~\eqref{eq:convrel}.

Finally, we must show fullness. A projector $\tilde{p} \in \End(Y^* \otimes X)$ is in particular positive, so corresponds to a CP map $p: X \otimes X^* \to Y \otimes Y^*$ under Choi's theorem~\eqref{eq:choi}; this CP map has $\tilde{p}$ as its underlying relation. 
\end{proof}
\noindent
\setstcolor{red}
We now provide a necessary \st{and sufficient} condition on a covariant quantum relation for it to be the underlying quantum relation of a covariant channel.
\begin{proposition}\label{prop:relchancond}
Let $p: X \otimes X^* \to Y \otimes Y^*$ be a relation. There is a channel $f: X \otimes X^* \to Y \otimes Y^*$ such that $\mathfrak{R}(f) = p$ \st{iff} \textcolor{red}{only if} the following positive element of $\End(X)$ is invertible:
\begin{calign}\label{eq:partialp}
\includegraphics[scale=.8,valign=c]{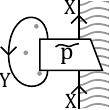}
\end{calign}
\end{proposition}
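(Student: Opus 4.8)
The plan is to push everything onto the Choi side via Theorem~\ref{thm:choi} and reduce the statement to a question about a single positive element. A channel with $\mathfrak{R}(f)=p$ is the same data as a positive $\widetilde{f}\in\End(Y^*\otimes X)$ subject to two requirements: that its support $s(\widetilde{f})$ (Definition~\ref{def:suppann}) equals the given projection $\widetilde{p}$, and the channel condition of Lemma~\ref{lem:chancond}, which in Choi form says that tracing $\widetilde{f}$ over the $Y^*$-strand returns $\id_X$. Write $\Phi\colon\End(Y^*\otimes X)\to\End(X)$ for this partial-trace operation, so that the element displayed in \eqref{eq:partialp} is exactly $D:=\Phi(\widetilde{p})$. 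The key structural fact I would use throughout is that $\Phi$ is completely positive, and in particular order-preserving. With this, the whole proposition becomes: a positive $\widetilde{f}$ with $s(\widetilde{f})=\widetilde{p}$ and $\Phi(\widetilde{f})=\id_X$ exists iff $D$ is invertible.

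For necessity, suppose such an $\widetilde{f}$ exists. Since $s(\widetilde{f})=\widetilde{p}$ we have $\widetilde{f}=\widetilde{p}\,\widetilde{f}\,\widetilde{p}\le\|\widetilde{f}\|\,\widetilde{p}$ in $\End(Y^*\otimes X)$. Applying the order-preserving map $\Phi$ gives $\id_X=\Phi(\widetilde{f})\le\|\widetilde{f}\|\,D$, and an inequality of the form $\id_X\le cD$ with $D$ positive forces $D$ to be bounded below by $c^{-1}\id_X$, hence invertible. This direction is clean and I expect no difficulty.

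For sufficiency I would start from a splitting of the projection: using idempotent splitting in $\TwoRep(G)$, write $\widetilde{p}=\iota\circ\iota^{\dagger}$ for an isometry $\iota\colon E\to Y^*\otimes X$, so that $E$ plays the role of an environment. Transposing $\iota$ produces a dilation $\tau_0\colon X\to Y\otimes E$ of a CP morphism $f_0$ whose underlying relation is precisely $p$ (its Choi support is $\widetilde{p}$ by construction), and a short diagram chase should give $\tau_0^{\dagger}\circ\tau_0=D$. The natural idea is then to correct $\tau_0$ using $D^{-1/2}$ — available exactly because $D$ is invertible — so as to enforce the isometry condition of Lemma~\ref{lem:chancond}. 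More intrinsically, I would consider the completely positive map $h\mapsto\Phi(\iota\circ h\circ\iota^{\dagger})$ on $\End(E)$, which sends $\id_E$ to $D$, and try to solve $\Phi(\iota\circ h\circ\iota^{\dagger})=\id_X$ for a positive invertible $h\in\End(E)$; any such $h$ yields $\widetilde{f}:=\iota\circ h\circ\iota^{\dagger}$, which is automatically positive with support $\widetilde{p}$.

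The main obstacle is precisely this last step: reconciling the two constraints simultaneously. Keeping the support equal to $\widetilde{p}$ confines $\widetilde{f}$ to the corner $\widetilde{p}\,\End(Y^*\otimes X)\,\widetilde{p}$ (equivalently, to $\iota\,\End(E)_{>0}\,\iota^{\dagger}$), while the channel condition pins down $\Phi(\widetilde{f})$; a naive conjugation of $\widetilde{p}$ by $\id_{Y^*}\otimes D^{-1/2}$ fixes the partial trace but in general rotates the range of $\widetilde{p}$ and so destroys the support condition. I therefore expect the real content to be showing that invertibility of $D$ guarantees a solution $h$ living inside the corner — an operator-scaling statement for the positive map $h\mapsto\Phi(\iota\circ h\circ\iota^{\dagger})$ — and then verifying via Lemma~\ref{lem:chancond} that the resulting $\widetilde{f}$ genuinely defines a channel. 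The positivity-and-faithful-trace argument of \eqref{eq:positivesumkernel}, used in the proof of Proposition~\ref{prop:relfct} to control supports under such compressions, is the tool I would reach for to confirm that the corrected $\widetilde{f}$ has support exactly $\widetilde{p}$.
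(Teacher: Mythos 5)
Your necessity argument is correct and complete, and is if anything cleaner than the paper's: the paper obtains that direction by applying functoriality of $\mathfrak{R}$ (Proposition~\ref{prop:relfct}) to the channel condition, whereas your operator inequality $\id_X = \Phi(\widetilde{f}) = \Phi(\widetilde{p}\,\widetilde{f}\,\widetilde{p}) \leq \|\widetilde{f}\|\,\Phi(\widetilde{p})$ gets there directly. The sufficiency direction, however, is where your proposal stops: you reduce it to the ``operator-scaling'' problem of finding a positive \emph{invertible} $h \in \End(E)$ with $\Phi(\iota \circ h \circ \iota^{\dagger}) = \id_X$, and you leave that problem open. Since that problem \emph{is} the entire content of the ``if'' direction, the proposal as written does not prove the proposition.

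You should know, though, that the obstacle you flagged is not a loose end you failed to tidy up; it is fatal, and it is exactly the point at which the paper's own proof goes wrong. The paper constructs $\widetilde{f}$ by conjugating $\widetilde{p}$ with an invertible 2-morphism built from the inverse of \eqref{eq:partialp}, and asserts that ``since this is conjugation of $\widetilde{p}$ by an invertible 2-morphism, its support is precisely $\widetilde{p}$ again''. That assertion is false in general: for invertible $v$, the support of $v \circ \widetilde{p} \circ v^{\dagger}$ is the projection onto $v(\mathrm{range}\,\widetilde{p})$, which equals $\widetilde{p}$ only when $v$ preserves $\mathrm{range}\,\widetilde{p}$ --- precisely the ``rotation of the range'' you identified. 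Moreover, no repair is possible, because the ``if'' direction fails as stated. Take $G$ trivial, $X = Y = \mathbb{C}^2$, and let $\widetilde{p} \in \End(Y^* \otimes X)$ be the rank-one projection onto the line spanned by $a = \diag(1,2)/\sqrt{5}$, regarded as an element of $B(Y,X) \cong Y^* \otimes X$. Then \eqref{eq:partialp} equals $a a^{\dagger} = \diag(1,4)/5$ up to an invertible dimension factor, hence is invertible. Yet any positive element of $\End(Y^* \otimes X)$ whose support is exactly this rank-one $\widetilde{p}$ has the form $\lambda\,\widetilde{p}$ with $\lambda > 0$, and its partial trace $\lambda\,a a^{\dagger}$ is not a multiple of $\id_X$ for any $\lambda$; so by \eqref{eq:channeldef} (equivalently Lemma~\ref{lem:chancond}) no channel has underlying relation $p$. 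Indeed, running the paper's construction on this example produces the Choi projection onto $\mathbb{C}\,\id$, i.e.\ the identity channel, whose relation is $\mathbb{C}\,\id \neq \mathbb{C}\,a$: the support rotates exactly as you predicted. The classical case is misleading here --- there the normalising conjugation is diagonal, hence does preserve the support, which is why the statement is true for ordinary relations. So: your necessity direction stands, your sufficiency direction is incomplete, and what is missing cannot be supplied, since invertibility of $D = \Phi(\widetilde{p})$ is strictly weaker than the corner-scaling feasibility condition that a correct characterisation would require.
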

\begin{proof}
Suppose that there is a channel $f: X \otimes X^* \to Y \otimes Y^*$ such that $\mathfrak{R}(f) = p$. By~\eqref{eq:channeldef} and Choi's theorem~\eqref{eq:choi} we have the following equation:
\begin{align*}
\includegraphics[scale=.8,valign=c]{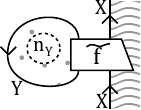}
~~=~~
\includegraphics[scale=.8,valign=c]{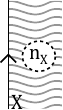}
\end{align*}
Noting that
\begin{align*}
s\left(
\includegraphics[scale=.8,valign=c]{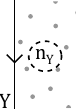}\right)
~=~
\id_{Y^*},
&&
s\left(\includegraphics[scale=.8,valign=c]{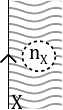}\right)
~=~
\id_{X},
&&
s(\widetilde{f}) = \widetilde{p},
\end{align*}
and using functoriality of $\mathfrak{R}$, we find that 
$$
s\left(\includegraphics[scale=.8,valign=c]{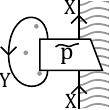}\right)
~=~
\includegraphics[scale=.8,valign=c]{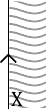}
$$
and so~\eqref{eq:partialp} is invertible (since it is a positive element with full support).

\st{In the other direction, suppose that}~\eqref{eq:partialp} \st{is invertible. This implies that the following element $t \in \End(X)$ is invertible:}
\begin{align*} 
\includegraphics[scale=.8,valign=c]{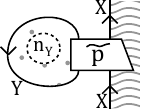}
\end{align*}
\st{(To see this, use functoriality of $\mathfrak{R}$ again.) 
We define the following positive element of $\End(Y^* \otimes X)$:}
\begin{align*}
\includegraphics[scale=.8,valign=c]{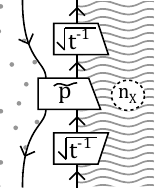}
\end{align*}
\st{Since this is conjugation of $\widetilde{p}$ by an invertible 2-morphism, its support is precisely $\widetilde{p}$ again. Since it is  a positive element of $\End(Y^* \otimes X)$, it corresponds by the Choi isomorphism}~\eqref{eq:choi} \st{to a CP map $X \otimes X^* \to Y \otimes Y^*$. By}~\eqref{eq:channeldef} \st{and}~\eqref{eq:choi}\st{, the following equation shows that this CP map is a channel:}
\begin{align*}
\includegraphics[scale=.8,valign=c]{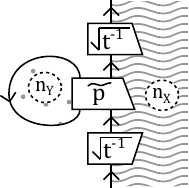}
~=~
\includegraphics[scale=.8,valign=c]{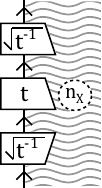}
~=~
\includegraphics[scale=.8,valign=c]{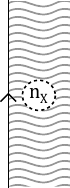}
\end{align*}
\end{proof}

\noindent
Before moving onto quantum graphs, we need to define (partial) quantum functions, which are a special sort of relation. The following definition and proposition are straightforward translations to rigid $C^*$-tensor categories of known results for hereditarily atomic von Neumann algebras.
\begin{definition}[{\cite[Def. 4.1]{Kornell2020}}]\label{def:function}
We say that a quantum relation $p: X \otimes X^* \to Y \otimes Y^*$ is a \emph{partial function} if it is \emph{coinjective}, i.e. $p \circ p^{\dagger} \leq \Delta_{Y\otimes Y^*}$. (Note that by $\circ$ here we mean the composition in $\QRel(G)$, not in $\Rep(G)$.) We say that a partial function is a \emph{function} if it is additionally \emph{cosurjective}, i.e. $\Delta_{X \otimes X^*} \leq p^{\dagger} \circ p$.
\end{definition}

\begin{proposition}[{c.f.~\cite[Thm. 6.3]{Kornell2020}}]\label{prop:partialfct}
Let $p: X \otimes X^* \to Y \otimes Y^*$ be a relation, and let $\iota: E \to Y^* \otimes X$ be an isometry which splits $\widetilde{p} \in \End(Y^* \otimes X)$, i.e. $\iota \circ \iota^{\dagger} = \widetilde{p}$. (Such an isometry always exists by local semisimplicity of $\TwoRep(G)$.) Then the following are equivalent:
\begin{enumerate}
\item The relation $p$ is a partial function.
\item The following 2-morphism is an isometry:
\begin{calign}\label{eq:partialfunctionisom}
\includegraphics[scale=.8]{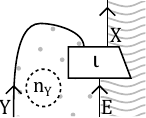}
\end{calign}
\item The following CP morphism $X \otimes X^* \to Y \otimes Y^*$ obeys the first and third equations of~\eqref{eq:cohom} (i.e. it is a possibly non-counital $*$-cohomomorphism):
\begin{calign}\label{eq:partialfunctioncp}
\includegraphics[scale=.8]{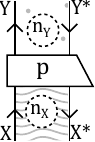}
\end{calign}
\end{enumerate}
Suppose now that $p$ is a partial function. Then the following are equivalent:
\begin{enumerate}
\item The partial function $p$ is a function. 
\item The following 2-morphism is an isometry:
\begin{calign}\label{eq:functionisom}
\includegraphics[scale=.8]{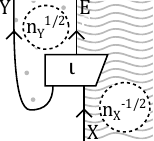}
\end{calign} 
\item The CP morphism~\eqref{eq:partialfunctioncp} is a $*$-cohomomorphism (i.e. it obeys all three equations of~\eqref{eq:cohom}). In particular, it is a channel.
\end{enumerate}
\end{proposition}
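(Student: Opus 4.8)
The plan is to realize the three candidate conditions as statements about one concretely built CP morphism and then chase implications, using the splitting isometry $\iota\colon E\to Y^*\otimes X$ as the common data. First I would observe that the CP morphism in~\eqref{eq:partialfunctioncp}, call it $f$, is obtained from the dilation $\tau\colon X\to Y\otimes E$ that one gets by transposing the $Y^*$-leg of $\iota$, exactly as in the second half of the proof of Theorem~\ref{thm:choi} (taking $m=\iota^{\dagger}$, so that $\widetilde f=\iota\circ\iota^{\dagger}=\widetilde p$). Since $\widetilde p$ is a projection, its support is itself, so $\mathfrak R(f)=s(\widetilde f)=\widetilde p=p$; by Proposition~\ref{prop:relfct} we also get $\mathfrak R(f^{\dagger})=p^{\dagger}$, $\mathfrak R(f\circ f^{\dagger})=p\circ p^{\dagger}$, and $\mathfrak R(\id)=\Delta$, all of which let me translate freely between the relational and the CP-morphism pictures.

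Next I would dispatch the equivalence of~(2) and~(3). The third equation of~\eqref{eq:cohom} (the $*$-compatibility) should hold automatically for any CP morphism, via the identity $f^{*}=f^{\dagger}$ established in~\eqref{eq:cpdaggertransp}; this reduces condition~(3), in the partial-function case, to just the first (comultiplicativity) equation of~\eqref{eq:cohom}. I would then expand $m_B^{\dagger}\circ f=(f\otimes f)\circ m_A^{\dagger}$ using the pair-of-pants structure~\eqref{eq:pairofpants} and the dilation~\eqref{eq:stinespring}, and show by a direct planar-isotopy computation that, after cancelling the $\iota$ legs, this equation is literally the statement that the 2-morphism~\eqref{eq:partialfunctionisom} satisfies $V^{\dagger}\circ V=\id$. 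This step is routine but bookkeeping-heavy diagram manipulation.

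The conceptual core is the equivalence of coinjectivity~(1) with the isometry condition. Here I would compute $\widetilde p\odot\widetilde{p^{\dagger}}$ directly in terms of $\iota$ and $\iota^{*}$ (recall $\widetilde{p^{\dagger}}=(\widetilde p)^{*}$ by~\eqref{eq:convrel}), and compare its support with $\widetilde{\Delta_{Y\otimes Y^*}}$, which is the rank-one-per-factor projection onto the cup $\eta_Y$ displayed in~\eqref{eq:discconfgraph}. The claim $s(\widetilde p\odot\widetilde{p^{\dagger}})\le\widetilde{\Delta}$ should be shown equivalent to the assertion that $\widetilde p\odot\widetilde{p^{\dagger}}$ is \emph{supported on the cup}, which in turn says precisely that the map~\eqref{eq:partialfunctionisom} is a partial isometry onto the correct subspace, i.e. an isometry. \textbf{I expect this translation to be the main obstacle}: passing from the order relation $p\circ p^{\dagger}\le\Delta$ to a clean isometry statement requires handling the support/annihilator carefully, and I anticipate using the positivity-and-faithfulness-of-trace argument packaged in~\eqref{eq:positivesumkernel} (exactly as in Proposition~\ref{prop:relfct}) to replace "the positive element kills $a$'' by "each Kraus-type factor kills $a$'', thereby controlling $s(\widetilde p\odot\widetilde{p^{\dagger}})$ without ever computing it as a closed form.

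Finally, for the function (cosurjective) half I would argue under the standing assumption that $p$ is already a partial function. The condition $\Delta_{X\otimes X^*}\le p^{\dagger}\circ p$ should be shown equivalent to $f$ additionally preserving the counit, i.e. to the second equation of~\eqref{eq:cohom}; granting the first and third equations from the partial-function case, this upgrades $f$ to a genuine $*$-cohomomorphism, and a $*$-cohomomorphism is automatically a channel (as noted after the definition of~\eqref{eq:cohom}). To close the loop with~\eqref{eq:functionisom}, I would invoke Lemma~\ref{lem:chancond}: the counit-preservation of $f$ is, for the dilation $\tau$ coming from $\iota$, exactly the isometry condition~\eqref{eq:functionisom} on the associated morphism. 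Thus cosurjectivity $\Leftrightarrow$ counit preservation $\Leftrightarrow$ \eqref{eq:functionisom} is an isometry $\Leftrightarrow$ $f$ is a $*$-cohomomorphism, with the channel property falling out as a corollary, completing the second chain of equivalences in parallel with the first.
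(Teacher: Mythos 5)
Your scaffolding is sound, and much of it agrees with the paper's own strategy: you work with the single CP morphism $f$ having $\widetilde f=\widetilde p$ (so $\mathfrak R(f)=p$), you note that the third equation of~\eqref{eq:cohom} is automatic for CP morphisms by~\eqref{eq:cpdaggertransp}, you invoke Lemma~\ref{lem:chancond} to tie counit preservation to the isometry~\eqref{eq:functionisom}, and you recall that a $*$-cohomomorphism is automatically a channel. The problem is that at the two points carrying the real mathematical content you assert exactly what has to be proved. For the passage from coinjectivity to the isometry property of~\eqref{eq:partialfunctionisom} (which you rightly flag as the main obstacle), your proposed tools do not close the gap. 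Conjugating by the support does show that $s(\widetilde p\odot(\widetilde p)^*)\leq\widetilde{\Delta_{Y\otimes Y^*}}$ forces $\widetilde p\odot(\widetilde p)^*=\eta_Y\circ x\circ\eta_Y^\dagger$ for some positive $x\in\End(\id_t)$, but the next claim --- that being ``supported on the cup'' \emph{says precisely} that the 2-morphism $V$ of~\eqref{eq:partialfunctionisom} satisfies $V^\dagger\circ V=\id$ --- is the theorem, not a translation. The kernel lemma~\eqref{eq:positivesumkernel} only controls which elements are annihilated, i.e.\ supports; the isometry condition is a strictly stronger, quantitative equality. The paper's proof needs a structural input here: it reads the cup-supported equation through Choi's theorem~\eqref{eq:choi} as an equality of two CP morphisms, observes that the dilation with environment $\id_t$ on the right-hand side is \emph{minimal}, and invokes Lemma~\ref{lem:partialisom} (uniqueness of dilations up to partial isometry) to produce an isometry $\alpha:\id_t\to E\otimes E^*$; tracing out the $Y$-wire and transposing then gives the isometry property of~\eqref{eq:partialfunctionisom}. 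Note also that ``partial isometry onto the correct subspace, i.e.\ an isometry'' is a non sequitur in general: it is precisely minimality of the comparison dilation that upgrades the partial isometry of Lemma~\ref{lem:partialisom} to an isometry.

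The second gap is your claimed direct equivalence of (2) and (3). The direction $(2)\Rightarrow(3)$ is indeed a diagram computation (and is how the paper argues), but $(3)\Rightarrow(2)$ by ``cancelling the $\iota$ legs'' does not go through as stated: $\iota$ is an isometry, not a unitary, so the only cancellation available is $\iota^\dagger\circ\iota=\id_E$, and the two sides of the comultiplicativity equation contain different numbers of $\iota$'s (one $\iota,\iota^\dagger$ pair versus two); no planar isotopy extracts $V^\dagger\circ V=\id$ from it. This is exactly why the paper proves the first group as a cycle $(1)\Rightarrow(2)\Rightarrow(3)\Rightarrow(1)$, concentrating the hard work in $(1)\Rightarrow(2)$ as above, with $(3)\Rightarrow(1)$ being the easy computation that $\widetilde p\odot(\widetilde p)^*$ factors through $\widetilde{\Delta_{Y\otimes Y^*}}$. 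Similarly, in the second group you assert rather than prove that cosurjectivity is equivalent to counit preservation: the paper's $(1)\Rightarrow(3)$ is an annihilator argument (one shows that a certain $1-t$ annihilates $(\widetilde p)^*\odot\widetilde p$, hence annihilates $\widetilde{\Delta_{X\otimes X^*}}$, crucially using the already-established isometry~\eqref{eq:partialfunctionisom} from the partial-function case), and the converse $(3)\Rightarrow(1)$ is delegated to Proposition~\ref{prop:confgraphfromchannel}. Supplying these three arguments would complete your outline; as written, the steps you label routine are where the proposition lives.
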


\begin{proof}
We show equivalences of the first group of statements. 
\begin{itemize}
\item (1 $\Rightarrow$ 2): Suppose that $p: X \otimes X^* \to Y \otimes Y^*$ is a partial function; this is to say that $s(\widetilde{p} \odot (\widetilde{p})^*) \leq \Delta_{Y \otimes Y^*}$. This implies that conjugation of $\widetilde{p} \odot (\widetilde{p})^*$ by $\Delta_{Y \otimes Y^*}$ preserves $\widetilde{p} \odot (\widetilde{p})^*$. We thereby obtain the following equation, where $x \geq 0$ is some positive element of the f.d.\ $C^*$-algebra $\End(\id_t)$:
\begin{calign}
\includegraphics[scale=.8,valign=c]{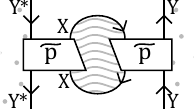}
~~=~~
\includegraphics[scale=.8,valign=c]{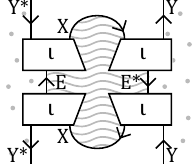}
~~=~~
\includegraphics[scale=.8,valign=c]{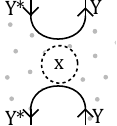}
\end{calign}
Here the first equality is by definition of the isometry $\iota$. Now by Choi's theorem~\eqref{eq:choi} we transpose the bottom left wire and the top right wire to move from an equation of positive elements of $\End(Y^* \otimes Y)$ to an equation of CP maps $Y \otimes Y^* \to Y \otimes Y^*$:
\begin{calign}\label{eq:partialfct1}
\includegraphics[scale=.8,valign=c]{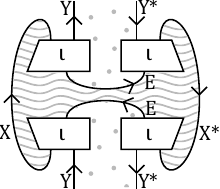}
~~=~~
\includegraphics[scale=.8,valign=c]{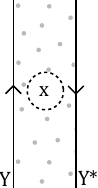}
\end{calign}
By uniqueness of a dilation up to partial isometry (Lemma~\ref{lem:partialisom}), and minimality of the dilation of the CP map on the RHS of~\eqref{eq:partialfct1}, there exists an isometry $\alpha: \id_t \to E \otimes E^*$ such that:
\begin{calign}
\includegraphics[scale=.8,valign=c]{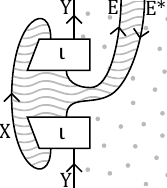}
~~=~~
\includegraphics[scale=.8,valign=c]{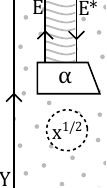}
\end{calign}
Now tracing out the $Y$-wire, we obtain the following equation for $\alpha$:
\begin{calign}\label{eq:partialfct2}
\includegraphics[scale=.8,valign=c]{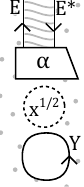}
~~=~~
\includegraphics[scale=.8,valign=c]{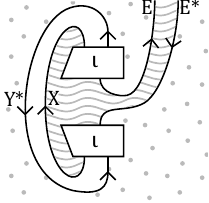}
~~=~~
\includegraphics[scale=.8,valign=c]{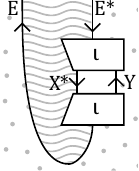}
~~=~~
\includegraphics[scale=.8,valign=c]{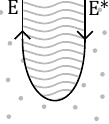}
\end{calign}
Transposing the $E^*$-wire, we thereby rewrite~\eqref{eq:partialfct2} as follows:
\begin{calign}
\includegraphics[scale=.8,valign=c]{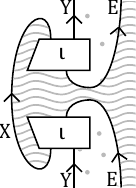}
~~=~~
\includegraphics[scale=.8,valign=c]{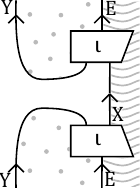}
~~=~~
\includegraphics[scale=.8,valign=c]{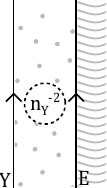}
\end{calign}
It follows immediately that~\eqref{eq:partialfunctionisom} is an isometry.
\item (2 $\Rightarrow$ 3):
We already know that the CP morphism $p$ obeys the third equation of~\eqref{eq:cohom}, because all CP morphisms do (we showed this in~\eqref{eq:cpdaggertransp}); so we only need to show that it obeys the first equation. 
Suppose that~\eqref{eq:partialfunctionisom} is an isometry. Then the first $*$-cohomomorphism equation is seen as follows:
\begin{calign}\nonumber
\includegraphics[scale=.8,valign=c]{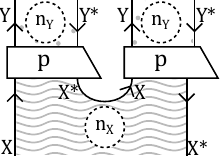}
~~=~~
\includegraphics[scale=.8,valign=c]{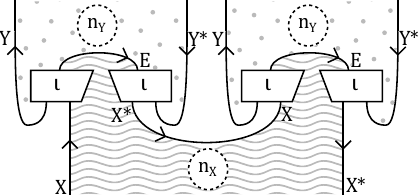}
\\
~~=~~
\includegraphics[scale=.8,valign=c]{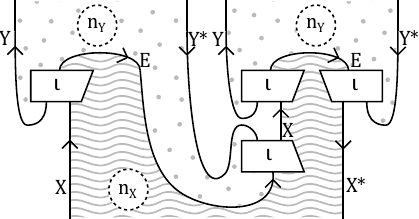}
~~=~~
\includegraphics[scale=.8,valign=c]{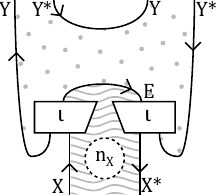}
\end{calign}
\item (3 $\Rightarrow$ 1): Consider the following equation for $\widetilde{p} \odot (\widetilde{p})^{*}$:
\begin{calign}\nonumber
\includegraphics[scale=.8,valign=c]{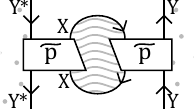}
~~=~~
\includegraphics[scale=.8,valign=c]{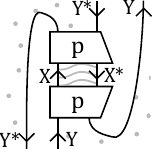}
~~=~~
\includegraphics[scale=.8,valign=c]{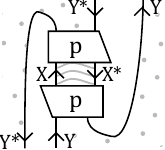}
\\
~~=~~
\includegraphics[scale=.8,valign=c]{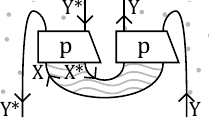}
~~=~~
\includegraphics[scale=.8,valign=c]{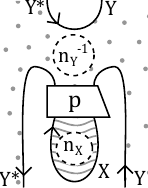}
~~=~~
\includegraphics[scale=.8,valign=c]{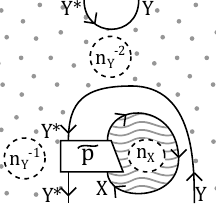}
\end{calign}
Here the first equality is by the Choi isomorphism~\eqref{eq:choi}; the second equality is by the third $*$-cohomomorphism condition~\eqref{eq:cohom}; the fourth equality is by the first $*$-cohomomorphism condition~\eqref{eq:cohom}; and the fifth equality is by the Choi isomorphism~\eqref{eq:choi}.
In the last diagram we see $\widetilde{p} \odot (\widetilde{p})^* = \Delta_{Y\otimes Y^*} \circ x$ for a positive $x \in \End(Y^* \otimes Y)$. It follows that $s(\tilde{p} \odot (\widetilde{p})^*) \leq \Delta_{Y\otimes Y^*}$.
\end{itemize}
We now show equivalence of the second group of statements.
\begin{itemize}
\item (1 $\Rightarrow$ 3): The fact that $\Delta_{X\otimes X^*} \leq s((\widetilde{p})^* \odot \widetilde{p})$ implies an inclusion of left annihilators $\mathcal{L}_{(\widetilde{p})^* \odot \widetilde{p}} \subseteq \mathcal{L}_{\widetilde{\Delta_{X\otimes X^*}}}$. We will now show that $1-t \in \mathcal{L}_{(\widetilde{p})^* \odot \widetilde{p}}$, where $t$ is defined as follows:
\begin{calign}
\includegraphics[scale=.8,valign=c]{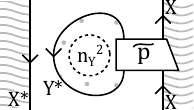}
\end{calign}
The fact that $1-t$ indeed annihilates $(\widetilde{p})^* \odot \widetilde{p}$ follows from the following equation:
\begin{calign}
\includegraphics[scale=.8,valign=c]{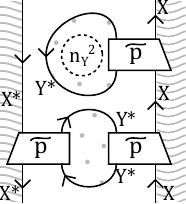}
~~=~~
\includegraphics[scale=.8,valign=c]{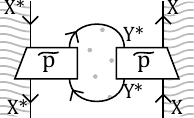}
\end{calign}
This equality follows from $\widetilde{p} = \iota \circ \iota^{\dagger}$ and~\eqref{eq:partialfunctionisom}. Now since $1-t$ annihilates $(\widetilde{p})^* \odot \widetilde{p}$, it must also annihilate $\widetilde{\Delta_{X \otimes X^*}}$ and so we have:
\begin{calign}\label{eq:function1}
\includegraphics[scale=.8,valign=c]{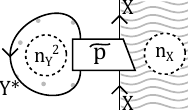}
~~=~~
\includegraphics[scale=.8,valign=c]{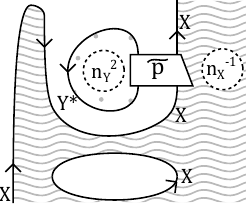}
~~=~~
\includegraphics[scale=.8,valign=c]{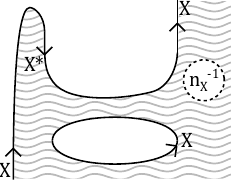}
~~=~~
\includegraphics[scale=.8,valign=c]{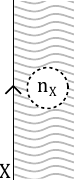}
\end{calign}
Moving from positive elements to CP morphisms using~\eqref{eq:choi}, the equation~\eqref{eq:function1} says precisely that:
\begin{calign}
\includegraphics[scale=.8,valign=c]{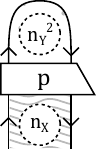}
~~=~~
\includegraphics[scale=.8,valign=c]{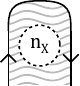}
\end{calign}
But this is precisely the second $*$-cohomomorphism condition~\eqref{eq:cohom} for~\eqref{eq:partialfunctioncp}.
\item (3 $\Leftrightarrow$ 2): This is precisely Lemma~\ref{lem:chancond}.
\item (3 $\Rightarrow$ 1): This will follow from Proposition~\ref{prop:confgraphfromchannel}, which shows that $s((\widetilde{p})^* \odot \widetilde{p}) \geq \widetilde{\Delta_{X \otimes X^*}}$.
\end{itemize}
\end{proof}

\subsection{Quantum $G$-graphs}

A covariant quantum relation encodes the `possibilistic' structure of its overlying covariant channel. In fact, for practical problems in zero-error communication, such as channel reversal and source channel coding, we will often need even less information than this.

Let us return to classical information theory for motivation. We began with a classical channel $f: I \to J$, i.e. a stochastic matrix $(p_{ji})_{j \in J, i \in I}$, and moved to its underlying relation $R: I \to J$. We now consider the relation $R^{\dagger} \circ R: I \to I$, where the dagger indicates the converse relation. Intuitively, two elements of $I$ are related by $R^{\dagger} \circ R$ if and only if they have a nonzero probability of being mapped to the same output under the channel $f$. In particular, the relation $R^{\dagger} \circ R$ is:
\begin{itemize}
\item Reflexive: $x \sim x$ for all $x \in I$.
\item Symmetric: $x \sim y \Leftrightarrow y \sim x$ for all $x,y \in I$.
\end{itemize}
This is a graph with vertex set $I$; we call it the \emph{confusability graph} of the channel $f$. It is not a simple graph, since every vertex is self-adjacent. However, its complement is a simple graph, which following~\cite{Stahlke2015} we call the \emph{distinguishability graph} of $f$. (We could do the same construction for a matrix in $\Mat_{\mathbb{R}_{\geq 0}}$ and we would obtain a graph, but this graph would not necessarily be reflexive; there might be some vertices with no adjoining edges.)

As before, we generalise these ideas to systems and channels in $\TwoRep(G)$.
\begin{definition}\label{def:confsimpgraphs}
We say that a covariant quantum relation $\Gamma: X \otimes X^* \to X \otimes X^*$ is \emph{symmetric} if $\Gamma^{\dagger} = \Gamma$ (or equivalently, if $(\widetilde{\Gamma})^* = \widetilde{\Gamma}$). We call a symmetric covariant quantum relation a \emph{quantum $G$-graph} on $X \otimes X^*$. We call $\Gamma \in \End(X \otimes X^*)$ the \emph{adjacency matrix} of the quantum $G$-graph.

We say that a quantum $G$-graph $\Gamma$ on $X \otimes X^*$ is a \emph{quantum confusability $G$-graph} (resp. a \emph{simple} quantum $G$-graph) if it obeys the left hand (resp. right hand) equation below:
\begin{calign}\nonumber
&\Delta_{X\otimes X^*} \leq \Gamma && & \widetilde{\Delta_{X \otimes X^*}} \circ \widetilde{\Gamma} = 0 
\\
&\textrm{confusability} && &\textrm{simple}
\end{calign}
\end{definition}
\noindent 
It is clear that $\id_{X^* \otimes X}-\widetilde{\Gamma}$ also defines a quantum $G$-graph on $X \otimes X^*$, which we call the \emph{complement} and write as $\Gamma^{\perp}$. The complement of a quantum confusability $G$-graph is a simple quantum $G$-graph and vice versa; this sets up a bijective correspondence between simple and confusability $G$-graphs. 
\begin{example}
We have already seen the \emph{discrete} quantum confusability $G$-graph $\Delta_{X\otimes X^*}$~\eqref{eq:discconfgraph}. 
The \emph{complete} quantum confusability $G$-graph $K_{X \otimes X^*}$ has the  projector $\id_{X^* \otimes X} \in \End(X^* \otimes X)$.

The \emph{discrete} simple quantum $G$-graph $K_{X \otimes X^*}^{\perp}$ has the projector $0 \in \End(X^* \otimes X)$. 
The \emph{complete} simple quantum $G$-graph $\Delta_{X \otimes X^*}^{\perp}$ has the projector $\id_{X^* \otimes X} - \widetilde{\Delta_{X\otimes X^*}} \in \End(X^* \otimes X)$.
\end{example}
\noindent
We now show that covariant channels give rise to confusability $G$-graphs, generalising the classical theory.
\begin{proposition}\label{prop:confgraphfromchannel}
Let $f: X \otimes X^* \to Y \otimes Y^*$ be a CP morphism. Then $\mathfrak{R}(f^{\dagger} \circ f) $ is a quantum $G$-graph on $X \otimes X^*$.
If $f$ is additionally a  channel, then it is a confusability $G$-graph.
\end{proposition}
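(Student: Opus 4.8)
For the first assertion the plan is to argue formally from Proposition~\ref{prop:relfct}: since $\mathfrak{R}\colon \CP(G) \to \QRel(G)$ is a unitary (dagger-preserving) functor and $(f^{\dagger} \circ f)^{\dagger} = f^{\dagger} \circ f$, applying $\mathfrak{R}$ and combining functoriality with dagger-preservation yields $\mathfrak{R}(f^{\dagger} \circ f)^{\dagger} = \mathfrak{R}\big((f^{\dagger} \circ f)^{\dagger}\big) = \mathfrak{R}(f^{\dagger} \circ f)$. Thus the underlying relation is symmetric in the sense of Definition~\ref{def:confsimpgraphs} and is therefore a quantum $G$-graph on $X \otimes X^*$. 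Equivalently, one observes that $\mathfrak{R}(f^{\dagger} \circ f) = \mathfrak{R}(f)^{\dagger} \circ \mathfrak{R}(f)$ is manifestly self-adjoint in the dagger category $\QRel(G)$.

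For the second assertion I must establish reflexivity, i.e.\ $\Delta_{X \otimes X^*} \leq \mathfrak{R}(f^{\dagger} \circ f)$ when $f$ is a channel. By functoriality of $\mathfrak{R}$ this reduces to the inequality of projections $\widetilde{\Delta_{X \otimes X^*}} \leq s\big((\widetilde{f})^* \odot \widetilde{f}\big)$ in $\End(X^* \otimes X)$. Conceptually this is the statement that the relation underlying a channel is \emph{total}: a channel loses no input, so every input is confusable with itself. To make this precise I would pass to the dilation picture. Choose a dilation $\tau\colon X \to Y \otimes E$ of $f$; by the computation in the proof of Theorem~\ref{thm:choi} the Choi element factors as $\widetilde{f} = m^{\dagger} \circ m$, where $m$ is the morphism obtained by bending the $Y$-leg of $\tau$. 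Then, exactly as in the $\nabla$-construction in the proof of Proposition~\ref{prop:relfct}, the self-composite $(\widetilde{f})^* \odot \widetilde{f}$ is again positive and equal to $w^{\dagger} \circ w$ for a single morphism $w$ assembled from two copies of $m$ glued along the $Y$-wire.

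The key input is the channel hypothesis. By Lemma~\ref{lem:chancond}, $f$ being a channel is equivalent to the morphism~\eqref{eq:channelcond} built from $\tau$ being an isometry; equivalently, the counit-preservation condition~\eqref{eq:channeldef} forces the partial trace of $\widetilde{f}$ over the output $Y$ to equal $\id_X$ up to normalization (cf.\ Proposition~\ref{prop:relchancond}). I would use this isometry identity to collapse the internal loop (the contraction over $Y$ and $E$) appearing in $w$, so that precomposing $w$ with the cup and cap morphisms defining $\widetilde{\Delta_{X \otimes X^*}}$ in~\eqref{eq:discconfgraph} remains injective; concretely, this gives $\ker w \subseteq \ker \widetilde{\Delta_{X \otimes X^*}}$. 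Since $(\widetilde{f})^* \odot \widetilde{f} = w^{\dagger} \circ w$, the faithful-trace argument of~\eqref{eq:positivesumkernel} identifies the right annihilator of $(\widetilde{f})^* \odot \widetilde{f}$ with $\ker w$, whence $\widetilde{\Delta_{X \otimes X^*}} \leq s\big((\widetilde{f})^* \odot \widetilde{f}\big)$, which is the desired inequality $\Delta_{X \otimes X^*} \leq \mathfrak{R}(f^{\dagger} \circ f)$.

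The main obstacle is precisely this last diagrammatic step: exploiting the isometry (equivalently, the invertibility of the partial trace) coming from the channel condition to collapse the glued $Y$-loop and deduce the kernel inclusion, while correctly tracking the quantum-dimension normalization factors (such as $n_X$ and $d_X$) that the covariant trace introduces into the cups and caps of~\eqref{eq:discconfgraph}. The remainder is routine bookkeeping via Choi's theorem and the functoriality and unitarity of $\mathfrak{R}$.
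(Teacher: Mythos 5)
Your strategy is correct and is at bottom the same as the paper's: both parts reduce to the same two statements. For symmetry, your appeal to functoriality and unitarity of $\mathfrak{R}$ (Proposition~\ref{prop:relfct}) is equivalent to the paper's one-line argument at the level of supports, $s((\widetilde{f})^* \odot \widetilde{f})^* = s((\widetilde{f})^* \odot \widetilde{f})$. For the confusability property, both you and the paper reduce to the left-annihilator inclusion $\mathcal{L}_{(\widetilde{f})^* \odot \widetilde{f}} \subseteq \mathcal{L}_{\widetilde{\Delta_{X\otimes X^*}}}$ and both invoke the faithful-trace lemma~\eqref{eq:positivesumkernel} together with the channel condition~\eqref{eq:channeldef}. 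The implementations differ: the paper spectrally decomposes $\widetilde{f} = \sum_i \lambda_i p_i$, deduces from~\eqref{eq:positivesumkernel} that any $a$ annihilating $(\widetilde{f})^* \odot \widetilde{f}$ kills each glued term built from $p_i, p_j$, and then uses counit preservation plus the sliding equations~\eqref{eq:sliding} to rewrite $a \circ \widetilde{\Delta_{X\otimes X^*}}$ as a sum of exactly such terms, hence zero. You instead factor $(\widetilde{f})^* \odot \widetilde{f} = w^{\dagger} \circ w$ through a single square root $w$ built from the dilation (the $\nabla$-construction the paper itself uses inside the proof of Proposition~\ref{prop:relfct}) and aim to show that $\widetilde{\Delta_{X\otimes X^*}}$ factors through $w$.

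The step you flag as ``the main obstacle'' does go through, and here is how to close it. Cap off the two environment legs of $w$ with $\eta_E^{\dagger}$: by the sliding equations~\eqref{eq:sliding} this closes the $m_*$/$m$ pair back into $\widetilde{f}$ contracted over the $Y$-wire, and the channel condition in Choi form (the first display in the proof of Proposition~\ref{prop:relchancond}, equivalently the isometry of Lemma~\ref{lem:chancond}) says precisely that this contraction is proportional to $\id_X$. Hence $\eta_E^{\dagger} \circ w \propto \eta_X^{\dagger}$, so $\widetilde{\Delta_{X\otimes X^*}} \propto \eta_X \circ \eta_E^{\dagger} \circ w$ factors through $w$; then $a \circ (w^{\dagger} \circ w) = 0$ forces $w \circ a^{\dagger} = 0$, whence $\widetilde{\Delta_{X\otimes X^*}} \circ a^{\dagger} = 0$ and $a \in \mathcal{L}_{\widetilde{\Delta_{X\otimes X^*}}}$, which is the desired inclusion. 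The quantum-dimension factors ($n_X$, $d_X$, $n_E$) you worry about are positive invertible elements of the commutative algebras $\End(\id_s)$, $\End(\id_r)$ and so never affect supports or annihilators. One small imprecision: what is needed is not that precomposition with $w$ ``remains injective'' but exactly this factorisation of $\widetilde{\Delta_{X\otimes X^*}}$ through $w$. Filled in this way, your route is marginally slicker than the paper's, since it avoids the spectral decomposition entirely; the paper's version has the mild advantage of never needing to name the morphism $w$ or its environment.
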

\begin{proof}
Symmetry is immediate from $s((\tilde{f})^* \odot \tilde{f})^* = s(((\tilde{f})^* \odot \tilde{f})^*) = s((\tilde{f})^* \odot \tilde{f})$.

We now show that when $f$ is a channel we obtain a confusability $G$-graph. We want to show that $\widetilde{\Delta_{X\otimes X^*}} \leq s((\widetilde{f})^* \odot \widetilde{f})$. This is equivalent to showing an inclusion of left annihilators $\mathcal{L}_{(\widetilde{f})^* \odot \widetilde{f}} \subseteq \mathcal{L}_{\widetilde{\Delta_{X\otimes X^*}}}$. Let $a \in \mathcal{L}_{\tilde{f}^* \boxtimes \tilde{f}}$. We spectrally decompose $\tilde{f} = \sum_{i} \lambda_i p_i$, where $\lambda_{i} \in \mathbb{R}_{\geq 0}$. By~\eqref{eq:positivesumkernel} we have that
\begin{calign}\label{eq:pipjann}
\includegraphics[scale=.8,valign=c]{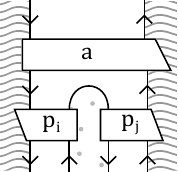} ~=~ 0~~~\forall~ i,j.
\end{calign}
Now we observe the following equation:
\begin{align*}
\includegraphics[scale=.8,valign=c]{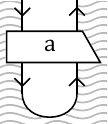}
~~=~~
\includegraphics[scale=.8,valign=c]{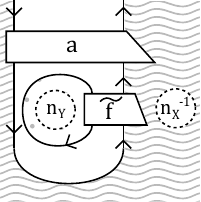}
~~=~~
\sum_j \lambda_j~
\includegraphics[scale=.8,valign=c]{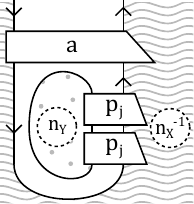}
~~=~~
\sum_j \lambda_j~
\includegraphics[scale=.8,valign=c]{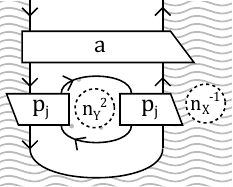}
\end{align*}
Here the first equality is by the counit preservation condition~\eqref{eq:channeldef}, the second equality is by spectral decomposition of $\tilde{f}$, and the third equality is by the sliding equations~\eqref{eq:sliding}. But then each of the summands in the last diagram is equal to zero by~\eqref{eq:pipjann}, and so $a \circ \Delta_{X \otimes X^*} = 0$.
\end{proof}
\noindent
We have seen that every channel has an associated quantum confusability $G$-graph, which might be expected. The less obvious fact is that every confusability $G$-graph arises from a covariant channel (thus justifying the name). The following proposition is a generalisation of~\cite[Lem. 2]{Duan2009}, which is the special case for quantum graphs on matrix algebras in the noncovariant setting.
\begin{proposition}\label{prop:graphtochan}
Let $\Gamma$ be a quantum confusability $G$-graph on a system $X \otimes X^*$. Then there exists an system $Y \otimes Y^*$ and a channel $f: X \otimes X^* \to Y \otimes Y^*$ such that $\Gamma = \mathfrak{R}(f^{\dagger} \circ f)$.
\end{proposition}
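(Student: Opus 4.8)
The plan is to reduce the statement to a purely relational problem using functoriality, and then solve that problem by an equivariant version of Duan's classical construction. Since $\mathfrak{R}\colon \CP(G)\to\QRel(G)$ is a unitary functor (Proposition~\ref{prop:relfct}), for any channel $f$ we have $\mathfrak{R}(f^\dagger\circ f)=\mathfrak{R}(f)^\dagger\circ\mathfrak{R}(f)$, so the confusability $G$-graph of $f$ depends only on its underlying relation $R:=\mathfrak{R}(f)$. Hence it suffices to exhibit a \emph{relational square root} of $\Gamma$, namely a quantum relation $R\colon X\otimes X^*\to Y\otimes Y^*$ with $R^\dagger\circ R=\Gamma$, that is moreover the underlying relation of some covariant channel; the latter is then guaranteed by Proposition~\ref{prop:relchancond}, i.e.\ by checking that the partial trace~\eqref{eq:partialp} of the projection $\widetilde{R}$ is invertible. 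This decouples the two difficulties cleanly: constructing $R$ (where the combinatorics live) and verifying the channel condition (where reflexivity enters).

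To build $R$ I would first split $\widetilde{\Gamma}=\iota\circ\iota^\dagger$ by an isometry $\iota\colon V\to X^*\otimes X$ onto its support $1$-morphism $V$, which exists by local semisimplicity of $\TwoRep(G)$. The classical template is the construction behind~\cite[Lem.~2]{Duan2009}: to realise an operator system $S=\mathrm{span}\{M_e^\dagger M_{e'}\}$ one chooses positive generators $\{F_a\}\subseteq S$ with $\sum_a F_a=\id_X$ and Kraus operators $M_a=\sqrt{F_a}\otimes\ket{a}$ whose ranges lie in \emph{mutually orthogonal output sectors}, so that cross terms $M_a^\dagger M_b$ ($a\neq b$) vanish and the diagonal recovers exactly $S$; this is nothing but the incidence relation of the graph. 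I would reshape $\iota$ by bending wires into a candidate projection $\widetilde{R}\in\End(Y^*\otimes X)$, with the output $Y$ and environment assembled from $V$ so that distinct sectors have orthogonal ranges. Because $\iota$ is a genuine $2$-morphism in $\TwoRep(G)$, the resulting $\widetilde{R}$ is automatically $G$-covariant, so no equivariance need be verified by hand. I would then compute $R^\dagger\circ R=s\big((\widetilde{R})^*\odot\widetilde{R}\big)$ and check it equals $\widetilde{\Gamma}$, and separately use $\widetilde{\Delta_{X\otimes X^*}}\leq\widetilde{\Gamma}$ (reflexivity) to force the partial trace of $\widetilde{R}$ to be full-support, hence invertible, giving the channel via Proposition~\ref{prop:relchancond}.

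The hard part will be the \emph{absence of overshoot}: proving $s\big((\widetilde{R})^*\odot\widetilde{R}\big)=\widetilde{\Gamma}$ exactly, rather than a strictly larger graph such as the two-step graph $\Gamma\circ\Gamma$ that the naive reshaping of $\iota$ produces (there the slices of $\iota$ are elements $E_a\in S$ and their products $E_a^\dagger E_b$ leave $S$). Classically the orthogonal $1$-dimensional output sectors and the individual square roots $\sqrt{F_a}$ kill these cross terms, but neither is $G$-stable when $G$ is continuous: a genuinely quantum $G$-invariant operator system need not admit any finite, $G$-permuted positive spanning set. The resolution I would pursue is to replace the $1$-dimensional sectors by the $G$-stable isotypic summands supplied by semisimplicity of $\TwoRep(G)$, packaging the orthogonality of ranges into the structure of the dilation so that cross terms between distinct sectors still vanish while the diagonal blocks jointly span all of $\widetilde{\Gamma}$. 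Establishing the resulting support identity $s\big((\widetilde{R})^*\odot\widetilde{R}\big)=\widetilde{\Gamma}$ by a spectral/annihilator argument in the spirit of Proposition~\ref{prop:relfct} (containment $\Gamma\leq R^\dagger\circ R$ from the diagonal together with reflexivity, and the reverse no-overshoot inclusion from the sector orthogonality) is where I expect essentially all of the work to be concentrated.
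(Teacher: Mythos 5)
Your reduction is sound: by functoriality of $\mathfrak{R}$ (Proposition~\ref{prop:relfct}) it suffices to produce a projection $\widetilde{R}\in\End(Y^*\otimes X)$ with $s\big((\widetilde{R})^*\odot\widetilde{R}\big)=\widetilde{\Gamma}$ whose partial trace is invertible, and your observation that reflexivity $\widetilde{\Delta_{X\otimes X^*}}\leq\widetilde{\Gamma}$ forces the partial-trace condition is correct. But the proposal stops exactly where the proof has to happen. You correctly identify the no-overshoot identity as the crux, correctly explain why Duan's orthogonal-sector mechanism has no direct equivariant analogue, and then defer the resolution to an unspecified ``isotypic summand'' variant of that same mechanism, conceding that essentially all of the work remains. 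That sketch does not work as stated: a nontrivial isotypic summand of the support of $\widetilde{\Gamma}$ is orthogonal to the unit (every $G$-invariant element lies in the trivial isotype), hence contains no nonzero positive elements, so there are no square roots to distribute over sectors; and the classical shift-by-identity trick that manufactures positive generators destroys precisely the $G$-decomposition you want to index the sectors by. Meanwhile any diagonal block $\mathrm{span}\{M_{i,k}^\dagger M_{i,l}\}$ of a sector construction necessarily contains positive elements, so such blocks can never equal the nontrivial summands they are supposed to recover. So what you have is a correct framing of the problem, not a proof.

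The paper's proof avoids the cross-term problem entirely, by a mechanism different from the one you are trying to rescue. Since $\widetilde{\Delta_{X\otimes X^*}}\leq\widetilde{\Gamma}$, the element
\begin{equation*}
\widetilde{f}\;:=\;\widetilde{\Delta_{X\otimes X^*}}+\tau\big(\widetilde{\Gamma}-\widetilde{\Delta_{X\otimes X^*}}\big),
\qquad \tau>0 \text{ small},
\end{equation*}
is simultaneously: a positive element of $\End(X^*\otimes X)$ with support exactly $\widetilde{\Gamma}$ (the two summands are orthogonal projections); a CP morphism, because the corresponding morphism $f=\Delta_{X\otimes X^*}+\tau(\Gamma-\Delta_{X\otimes X^*})$ is a positive perturbation of the positive invertible element $\Delta_{X\otimes X^*}=\id_X\otimes(n_X)^{-2}\otimes\id_{X^*}$, so that $\widetilde{f}$ admits a dilation $\sigma:X^*\to X^*\otimes E$; and counit-compatible, $\eta_X^\dagger\circ\widetilde{f}=\eta_X^\dagger$, again by orthogonality of the summands. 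Bending wires on $\sigma$ yields a channel $g:X\otimes X^*\to E^*\otimes E$ into the pair-of-pants system on the environment, and a direct diagrammatic computation gives $\widetilde{g^\dagger\circ g}=\widetilde{f}^\dagger\circ P\circ\widetilde{f}$ with $P=n_X\otimes\id_{X^*}\otimes n_{E^*}^{-2}\otimes\id_X\otimes n_X$ positive and invertible. Hence $s(\widetilde{g^\dagger\circ g})=s_L(\widetilde{f}^\dagger\circ P^{1/2})=s(\widetilde{f})=\widetilde{\Gamma}$: no overshoot can occur, because conjugating an invertible positive element by $\widetilde{f}$ cannot change the support. In other words, the ``square root'' is taken by dilating a full-support positive representative of $\Gamma$ satisfying the counit condition, rather than by splitting $\Gamma$ into positively generated sectors; that single idea replaces the entire missing core of your argument, and it also renders Proposition~\ref{prop:relchancond} unnecessary, since channelhood of $g$ follows directly from $\eta_X^\dagger\circ\widetilde{f}=\eta_X^\dagger$.
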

\begin{proof}
By definition of a quantum confusability graph we have that $\widetilde{\Delta_{X\otimes X^*}} \leq \widetilde{\Gamma}$. We can therefore decompose $\widetilde{\Gamma} = \widetilde{\Delta_{X\otimes X^*}} + (\widetilde{\Gamma} - \widetilde{\Delta_{X\otimes X^*}})$, where the summands are orthogonal projections. In fact the summands are quantum graphs, since we know that $\Gamma$ and $\Delta_{X\otimes X^*}$ are symmetric, and this implies that  $\widetilde{\Gamma} - \widetilde{\Delta_{X\otimes X^*}}$ is symmetric also. Moving via Choi's theorem from the positive operator to the corresponding CP morphism, we obtain $\Gamma = \Delta_{X\otimes X^*} + (\Gamma - \Delta_{X\otimes X^*})$, where the two summands are self-adjoint CP morphisms. 

\ignore{Here we define $\underline{\Delta_{X \otimes X^*}}$ to be the CP morphism such that $\underline{\widetilde{\Delta_{X\otimes X^*}}} = \widetilde{\Gamma} - \widetilde{\Delta_{X\otimes X^*}}$.}

By definition of the Choi isomorphism~\eqref{eq:choi} we have $\Delta_{X\otimes X^*} = \id_X \otimes (n_X)^{-2} \otimes \id_{X^*}$. This is a positive and invertible element of $\End(X \otimes X^*)$. It follows that $f:= \Delta_{X \otimes X^*} + \tau (\Gamma - \Delta_{X \otimes X^*})$ is positive, for small enough $\tau \in \mathbb{R}_{>0}$; we choose such a $\tau$. We now observe that $f$ is both positive and CP, which implies that $\widetilde{f} \in \End(X^* \otimes X)$ is also both positive and CP, in the sense that it admits a dilation~\eqref{eq:stinespring}. Moreover, it satisfies $s(\widetilde{f}) = s(\widetilde{\Delta_{X\otimes X^*}} + \tau (\widetilde{\Gamma} - \widetilde{\Delta_{X\otimes X^*}})) = s(\widetilde{\Gamma}) = \widetilde{\Gamma}$, since $\widetilde{\Delta_{X\otimes X^*}}$ and $ (\widetilde{\Gamma} - \widetilde{\Delta_{X\otimes X^*}})$ are orthogonal. \ignore{We also observe that $s(\tilde{f}^{\dagger}  \circ \tilde{f}) = s(\tilde{f}) = \widetilde{\Gamma}$.} Recall that $\eta_X^{\dagger}: X^* \otimes X \to \id_s$ is the notation for the cap of the left duality on $X$~\eqref{eq:cupscaps}; we observe that 
\begin{calign}
\label{eq:etaftilde}
\eta_X^{\dagger} \circ \widetilde{f} = \eta_X^{\dagger},
\end{calign}
since $\eta_X^{\dagger} \circ (\widetilde{\Gamma} - \widetilde{\Delta_{X\otimes X^*}}) = 0$ by orthogonality of $\widetilde{\Delta_{X\otimes X^*}}$ and $(\widetilde{\Gamma} - \widetilde{\Delta_{X\otimes X^*}})$. 

Let $\tau: X^* \to X^* \otimes E$ be a dilation of the CP morphism $\widetilde{f}$. (We remark that the left and right dimension of $E$ are always invertible, because it is an object of $\End(r_0) \simeq \Rep(G)$.)
We define the following CP morphism $g: X \otimes X^* \to E^* \otimes E$:
\begin{calign}
\includegraphics[scale=.8,valign=c]{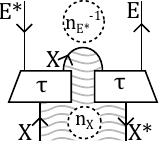}
\end{calign}
It is straightforward to show that $g$ is a channel:
\begin{calign}
\includegraphics[scale=.8,valign=c]{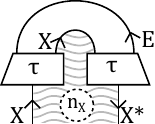}
~~=~~
\includegraphics[scale=.8,valign=c]{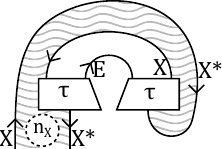}
~~=~~
\includegraphics[scale=.8,valign=c]{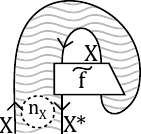}
~~=~~
\includegraphics[scale=.8,valign=c]{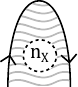}
\end{calign}
Here the first equality is by sliding~\eqref{eq:sliding}; we pulled the $\tau_*$ around the loop. The second equality is by the fact that $\tau$ is a dilation of $\widetilde{f}$. The third equality is by~\eqref{eq:etaftilde}.

We claim that this channel has $\Gamma$ as its confusability graph. Indeed, observe that the positive element $\widetilde{g^{\dagger} \circ g} \in \End(X^* \otimes X)$ is as follows:
\begin{calign}
\includegraphics[scale=.8,valign=c]{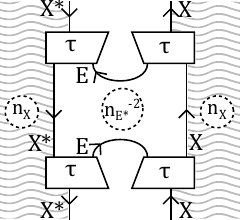}
\end{calign}
In inline notation, this is $\widetilde{f}^{\dagger} \circ (n_X \otimes \id_{X^*} \otimes n_{E^*}^{-2} \otimes \id_X \otimes n_X) \circ \widetilde{f}$. We have:
\begin{align*}
s(\widetilde{f}^{\dagger} \circ (n_X \otimes \id_{X^*} \otimes n_{E^*}^{-2} \otimes \id_X \otimes n_X) \circ \widetilde{f}) &= s_L( \widetilde{f}^{\dagger} \circ ((n_X)^{1/2} \otimes \id_{X^*} \otimes n_{E^*}^{-1} \otimes \id_X \otimes (n_X)^{1/2})) 
\\
&=s_L(\widetilde{f}^{\dagger}) = s(\widetilde{f}) = \widetilde{\Gamma} 
\end{align*}
Here the penultimate equality is by positivity of $\widetilde{f}$. We have therefore constructed a system $E^* \otimes E$ and a channel $g: X \otimes X^* \to E^* \otimes E$ such that $\mathfrak{R}(g^{\dagger} \circ g)$ = $\Gamma$.
\end{proof}
\noindent
We will finish the section  by defining covariant  homomorphisms of quantum $G$-graphs.
\begin{definition}\label{def:graphhoms}
Let $(A,\Gamma_A)$, $(B,\Gamma_B)$ be quantum confusability graphs. We say that a channel $f: A \to B$ satisfying
$$
\mathfrak{R}(f^{\dagger} \circ \Gamma_B \circ f) \leq \Gamma_A
$$
is  a \emph{homomorphism of confusability graphs} $(A,\Gamma_A) \to (B,\Gamma_B)$.

On the other hand, let $(A,\Gamma_A)$, $(B,\Gamma_B)$ be simple quantum graphs. We say that a channel $f: A \to B$ satisfying
$$
\mathfrak{R}(f \circ \Gamma_A \circ f^{\dagger}) \leq \Gamma_B
$$
is a \emph{homomorphism of simple graphs} $(A,\Gamma_A) \to (B,\Gamma_B)$.
\end{definition}
\begin{lemma}
Let $A, B$ be systems, let $\Gamma_A, \Gamma_B$ be confusability graphs on these systems, and let $\Gamma_A^{\perp}, \Gamma_B^{\perp}$ be their complementary simple graphs. A channel $f: A \to B$ is a homomorphism of confusability graphs $(A,\Gamma_A) \to (B,\Gamma_B)$ iff it is a homomorphism of simple graphs $(A,\Gamma_A^{\perp}) \to (B,\Gamma_B^{\perp})$. 
\end{lemma}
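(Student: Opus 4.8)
The plan is to turn each of the two homomorphism conditions into the vanishing of a single scalar trace, and then to recognise the two scalars as the value of one and the same closed diagram. I would begin by recording an elementary fact about a finite-dimensional $C^*$-algebra with a faithful positive trace: for a projection $P$ and a positive element $T$, one has $s(T) \le P$ iff $P^\perp T = 0$ iff $\Tr(P^\perp T)=0$, where $P^\perp := \id - P$. The first equivalence holds because $s(T)$ is the least projection fixing $T$; the second because $\Tr(P^\perp T) = \Tr(P^\perp T P^\perp)$ with $P^\perp T P^\perp \ge 0$, so faithfulness forces $P^\perp T^{1/2}=0$ and hence $P^\perp T = 0$.

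Next, writing $R := \mathfrak{R}(f)$ and using functoriality and unitarity of $\mathfrak{R}$ (Proposition~\ref{prop:relfct}) together with $\mathfrak{R}(\Gamma) = \Gamma$ for a relation $\Gamma$, I would rewrite the confusability-homomorphism condition as $s(U_A) \le \widetilde{\Gamma_A}$ and the simple-homomorphism condition as $s(U_B) \le \widetilde{\Gamma_B^\perp}$, where $U_A := \widetilde{R^\dagger} \odot \widetilde{\Gamma_B} \odot \widetilde{R} \in \End(X^* \otimes X)$ and $U_B := \widetilde{R} \odot \widetilde{\Gamma_A^\perp} \odot \widetilde{R^\dagger} \in \End(Y^* \otimes Y)$ are the positive elements representing the composite relations $R^\dagger \circ \Gamma_B \circ R$ and $R \circ \Gamma_A^\perp \circ R^\dagger$. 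By the fact of the previous paragraph (and $\Gamma_B^{\perp\perp} = \Gamma_B$), these two conditions become $\Tr(\widetilde{\Gamma_A^\perp}\, U_A)=0$ and $\Tr(\widetilde{\Gamma_B}\, U_B)=0$ respectively.

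The heart of the argument is then the identity $\Tr(\widetilde{\Gamma_A^\perp}\, U_A) = \Tr(\widetilde{\Gamma_B}\, U_B)$. Expanding the definition of $\odot$ and of the trace on $\End(X^* \otimes X)$, the left-hand side is a single closed diagram obtained by gluing the four $2$-morphisms $\widetilde{R}$, $\widetilde{R^\dagger} = (\widetilde{R})^*$, $\widetilde{\Gamma_B}$ and $\widetilde{\Gamma_A^\perp}$ into a cycle; reading the same loop starting from $\widetilde{\Gamma_B}$ rather than from $\widetilde{\Gamma_A^\perp}$ produces exactly the closed diagram computing the right-hand side. The rearrangement is legitimate because the trace is cyclic (equation~\eqref{eq:qtrace}) and because $\Gamma_B$ and $\Gamma_A^\perp$ are symmetric relations, so $(\widetilde{\Gamma_B})^* = \widetilde{\Gamma_B}$ and $(\widetilde{\Gamma_A^\perp})^* = \widetilde{\Gamma_A^\perp}$; this symmetry is what allows each graph-box to be approached from either side of the loop without reorienting it. Chaining the equivalences, the confusability-homomorphism condition holds iff $\Tr(\widetilde{\Gamma_A^\perp}\, U_A)=0$ iff $\Tr(\widetilde{\Gamma_B}\, U_B)=0$ iff the simple-homomorphism condition holds, which is the claim; because every step is a genuine equivalence, no separate converse argument is needed.

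I expect the main obstacle to be the diagrammatic identity of the last paragraph: one must check that the cups, caps and normalisation scalars (the $n_X$, $n_Y$ factors) produced by $\odot$ and by the two traces assemble into literally the same closed loop, or at worst differ by a common positive scalar, which is harmless since only vanishing is at stake. Verifying this requires a careful application of the snake and sliding equations~\eqref{eq:snake},~\eqref{eq:sliding} to slide each box around the loop, using the symmetry of the two graphs at the point where $\widetilde{\Gamma_B}$ and $\widetilde{\Gamma_A^\perp}$ change sides. Everything else is routine bookkeeping with the support/trace dictionary established in the first paragraph.
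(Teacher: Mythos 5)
Your proposal is correct and takes essentially the same approach as the paper: the paper likewise converts each homomorphism condition into the vanishing of a scalar trace (via positivity and faithfulness of the trace together with the projection identities for $\widetilde{\Gamma_A^{\perp}}$ and $\widetilde{\Gamma_B}$), and then identifies the two scalars by pulling the boxes around the closed loop using isotopy and the equality of the left and right traces in~\eqref{eq:qtrace}. The diagrammatic identity you flag as the main remaining obstacle is exactly the middle chain of the paper's proof, and it goes through just as you anticipate.
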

\begin{proof}
This is shown by the following sequence of equivalences, starting with the condition for $f$ to be a homomorphism of confusability graphs and finishing with the condition for $f$ to be a homomorphism of simple graphs:
\begin{align*}
\includegraphics[scale=.8,valign=c]{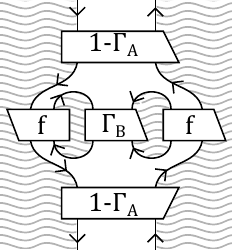}
~~=~~0
~~~\Leftrightarrow~~~
\Tr_{s}\left(~\includegraphics[scale=.8,valign=c]{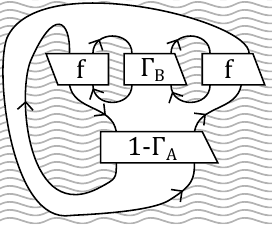}~\right)
=0
\end{align*}
\begin{align*}
\Leftrightarrow~~~
\includegraphics[scale=.8,valign=c]{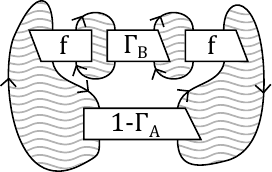}~~=~~0
~~~\Leftrightarrow~~~
\includegraphics[scale=.8,valign=c]{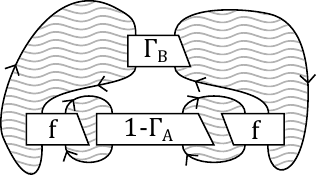}~~=~~0
\end{align*}
\begin{align*}
\Leftrightarrow~~~
\Tr_{s}\left(~\includegraphics[scale=.8,valign=c]{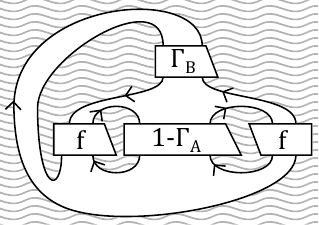}~\right)=0
~~~\Leftrightarrow~~~
\includegraphics[scale=.8,valign=c]{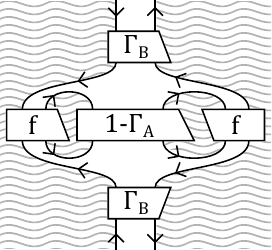}~~=~~0
\end{align*}
Here the first implication is by positivity and faithfulness of the trace~\eqref{eq:qtrace}, and isotopy (pull $(1-\Gamma_A$) around the loop, and use the fact that $(1-\Gamma_A)^2 = (1-\Gamma_A)^{\dagger} = (1-\Gamma_A)$ since it is a projection); the second is by the fact that the left trace is equal to the right trace~\eqref{eq:qtrace}; the third is by isotopy, pulling the $f$-boxes around the loop; the fourth is by the fact that the left trace is equal to the right trace; and the last is by positivity and faithfulness of the trace, and isotopy (again, use $\Gamma_B = (\Gamma_B)^2$ and pull one $\Gamma_B$ round the loop to get a positive operator in the trace).
\end{proof}
\begin{example}[Homomorphisms between classical graphs]\label{ex:stochgraphhom}
Let $A = [m]$ and $B = [n]$ be commutative $C^*$-algebras. Then simple graphs $(A,\Gamma_A)$ and $(B,\Gamma_B)$ are ordinary simple graphs with $m$ and $n$ vertices respectively (the CP maps $\Gamma_A, \Gamma_B$ are their adjacency matrices~\cite[Def. 5.1]{Musto2018}). A homomorphism $f: (A,\Gamma_A) \to (B,\Gamma_B)$ in the sense of Definition~\ref{def:graphhoms} is a stochastic matrix $(p_{ji})_{j \in [n],i \in [m]}$ whose underlying relation $\mathfrak{R}(f) \subset [m] \times [n]$ satisfies:
$$
(x,y) \in \mathfrak{R}(f) \Rightarrow \forall ~z \in [m]: (x \sim_{\Gamma_A} z \Rightarrow y \sim_{\Gamma_B} \tilde{z} ~~\forall~~ (z,\tilde{z}) \in \mathfrak{R}(f))
$$
\end{example}
\begin{remark}
This definition of homomorphism is suited to zero-error communication theory. However, since ordinary homomorphisms of classical graphs are functions, it would be natural to require that the channel defining a homomorphism should be a function, in the sense of Definition~\ref{def:function}. This produces a stronger definition of homomorphism which reduces to ordinary homomorphisms in the case of classical graphs, rather than the `stochastic' homomorphisms of Example~\ref{ex:stochgraphhom}. However, we need the weaker notion for zero-error source-channel coding. Indeed, it is easy to find quantum graphs between which there exist covariant homomorphisms in the sense of Definition~\ref{def:graphhoms}, but not in the stronger sense. For instance, consider the group $G=S_n$. Let $A = \mathbb{C}$ with the trivial action of $S_n$, and let $B = \mathbb{C}^{\oplus n}$ where $S_n$ permutes the factors. There is precisely one covariant channel $A \to B$, which is automatically a homomorphism for the complete confusability graphs on $A$ and $B$. However, there are no covariant functions from $A$ to $B$.
\end{remark}

\section{Reversibility of covariant channels}

First, following~\cite[\S{}2]{Duan2009}, we show that postprocessing of a channel only ever increases the size of the confusability graph. 
\begin{lemma}\label{lem:postprocessing}
Let $f: X \otimes X^* \to Y \otimes Y^*$ be a channel. Then for any channel $g: Y \otimes Y^* \to Z \otimes Z^*$, the confusability graph associated to $f$ is a subgraph of the confusability graph associated to $g \circ f$.
\end{lemma}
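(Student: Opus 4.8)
The plan is to reduce the statement to a monotonicity property of composition in the dagger category $\QRel(G)$, exploiting the functoriality of $\mathfrak{R}$ established in Proposition~\ref{prop:relfct}. Writing $\Gamma_f := \mathfrak{R}(f^\dagger \circ f)$ and $\Gamma_{g \circ f} := \mathfrak{R}((g \circ f)^\dagger \circ (g \circ f))$ for the two confusability graphs, the goal is to show $\Gamma_f \leq \Gamma_{g \circ f}$ in the partial order on relations, i.e. $\widetilde{\Gamma_f} \leq \widetilde{\Gamma_{g \circ f}}$ as projections in $\End(X^* \otimes X)$.

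First I would rewrite the right-hand graph. Since $(g \circ f)^\dagger \circ (g \circ f) = f^\dagger \circ (g^\dagger \circ g) \circ f$, applying the functor $\mathfrak{R}$ and using its unitarity (so that $\mathfrak{R}(f^\dagger) = \mathfrak{R}(f)^\dagger$) gives
\[
\Gamma_{g \circ f} = \mathfrak{R}(f)^\dagger \circ \Gamma_g \circ \mathfrak{R}(f),
\qquad
\Gamma_f = \mathfrak{R}(f)^\dagger \circ \Delta_{Y \otimes Y^*} \circ \mathfrak{R}(f),
\]
where $\Gamma_g := \mathfrak{R}(g^\dagger \circ g)$ is the confusability graph of $g$ on $Y \otimes Y^*$ and the second equality uses that $\Delta_{Y \otimes Y^*}$ is the identity of $\QRel(G)$. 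Because $g$ is a channel, Proposition~\ref{prop:confgraphfromchannel} guarantees that $\Gamma_g$ is a confusability $G$-graph, i.e. $\Delta_{Y \otimes Y^*} \leq \Gamma_g$. The lemma then follows as soon as we know that pre- and post-composition in $\QRel(G)$ is monotone with respect to $\leq$.

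Monotonicity is the only substantive step, and I would argue it as follows. Let $r := \mathfrak{R}(f)$; by the collapse of nested supports into a single outer support (exactly the computation carried out in the proof of Proposition~\ref{prop:relfct}), the composites $r^\dagger \circ \Delta_{Y\otimes Y^*} \circ r$ and $r^\dagger \circ \Gamma_g \circ r$ equal $s(\widetilde{r}^{*} \odot \widetilde{\Delta_{Y \otimes Y^*}} \odot \widetilde{r})$ and $s(\widetilde{r}^{*} \odot \widetilde{\Gamma_g} \odot \widetilde{r})$ respectively, the Choi matrices being treated as positive elements. By Theorem~\ref{thm:choi} the operation $\odot$ is the image under the Choi isomorphism of composition of CP morphisms, hence it is bilinear and sends positive elements to positive elements. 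Since $\widetilde{\Gamma_g} - \widetilde{\Delta_{Y \otimes Y^*}} \geq 0$, bilinearity gives
\[
\widetilde{r}^{*} \odot \widetilde{\Gamma_g} \odot \widetilde{r} - \widetilde{r}^{*} \odot \widetilde{\Delta_{Y \otimes Y^*}} \odot \widetilde{r}
= \widetilde{r}^{*} \odot (\widetilde{\Gamma_g} - \widetilde{\Delta_{Y \otimes Y^*}}) \odot \widetilde{r} \geq 0 .
\]
Finally, the support map is monotone on the positive cone — if $0 \leq a \leq b$ then $\ker b \subseteq \ker a$, whence $s(a) \leq s(b)$ — and applying this yields $\Gamma_f = r^\dagger \circ \Delta_{Y \otimes Y^*} \circ r \leq r^\dagger \circ \Gamma_g \circ r = \Gamma_{g \circ f}$, as required.

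The main obstacle is precisely this monotonicity of $\QRel(G)$-composition; once bilinearity and positivity-preservation of $\odot$ (read off from the cone isomorphism of Theorem~\ref{thm:choi}) and monotonicity of the support map are in hand, the rest is formal bookkeeping with the functor $\mathfrak{R}$. A secondary point requiring care is that the relation composite is defined by taking supports at each stage; I would either invoke the nested-support collapse already used in Proposition~\ref{prop:relfct}, or equivalently phrase everything through the CP morphisms $r^\dagger \circ \Delta_{Y\otimes Y^*} \circ r$ and $r^\dagger \circ \Gamma_g \circ r$ and pass to the underlying relations only at the very end.
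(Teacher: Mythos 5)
Your proposal is correct and takes essentially the same route as the paper's proof: both reduce the lemma to the fact that $\Delta_{Y \otimes Y^*} \leq \mathfrak{R}(g^\dagger \circ g)$ because $g$ is a channel, then conclude via monotonicity of the sandwich $x \mapsto \widetilde{\mathfrak{R}(f)}^* \odot x \odot \widetilde{\mathfrak{R}(f)}$ on the positive cone, monotonicity of the support map, and functoriality of $\mathfrak{R}$ to identify $\mathfrak{R}(f)^\dagger \circ \Delta_{Y \otimes Y^*} \circ \mathfrak{R}(f)$ with the confusability graph of $f$. The only difference is expository: the paper compresses the monotonicity and support steps into a single ``Thus,'' which you spell out explicitly (bilinearity and positivity-preservation of $\odot$ via the Choi cone isomorphism, and $0 \leq a \leq b \Rightarrow s(a) \leq s(b)$).
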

\begin{proof}
Since $g$ is a channel we know that $\Delta_{Y\otimes Y^*} \leq g^{\dagger} \circ g$. Thus:
\begin{align*}
s(\widetilde{f^{\dagger}} \odot (\widetilde{g^{\dagger}} \odot \widetilde{g}) \odot \widetilde{f}) &\geq s(\widetilde{f^{\dagger}} \odot \widetilde{\Delta_{Y\otimes Y^*}} \odot \widetilde{f})  
\\&=s(\widetilde{f^{\dagger}} \odot \widetilde{f})
\end{align*}
Here the equality is by functoriality of $\mathfrak{R}$ and the fact that $\Delta_{Y \otimes Y^*}$ is the identity on $Y$ in $\QRel(G)$.
\end{proof}
\noindent
We will now consider when a channel is reversible.
\begin{definition}
We say that a covariant channel $f: A \to B$ is \emph{reversible} if there exists another covariant channel $g: B \to A$ which is a left inverse for $f$, i.e. $g \circ f = \id_A$. 
\end{definition}
\noindent
It is clear that a classical channel $f$ is reversible if and only if its confusability graph is discrete. Indeed, this implies that the converse relation $\mathfrak{R}(f)^{\dagger}$ is a partial function, which can be extended to obtain a stochastic matrix which is a left inverse for $f$. This argument extends straightforwardly to general channels, as we now show. 

In the statement of the following lemma we will refer to the underlying relation of a channel as a confusability graph. By this we mean that it is a property of the underlying relation that it is a confusability graph, in the sense of Definition~\ref{def:confsimpgraphs}; we do not mean that it is the confusability graph of the channel. 
\begin{lemma}\label{lem:idchandisc}
There is precisely one channel $f: X \otimes X^* \to X \otimes X^*$ whose underlying relation is the discrete confusability graph on $X$, and this is the identity channel.
\end{lemma}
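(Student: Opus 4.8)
The plan is to establish the two assertions separately: that the identity channel realises the discrete confusability graph as its underlying relation, and that no other channel does.

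For the existence half, I would simply recall that the identity morphism is patently a channel (it preserves the counit), and that the computation in the proof of Proposition~\ref{prop:relfct} already shows $s(\widetilde{\id_{X\otimes X^*}}) = \widetilde{\Delta_{X\otimes X^*}}$. Hence $\mathfrak{R}(\id_{X\otimes X^*}) = \Delta_{X\otimes X^*}$, so the identity is one channel of the desired kind.

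The substance is the uniqueness half. Let $f$ be any channel with $s(\widetilde{f}) = \widetilde{\Delta_{X\otimes X^*}}$. The key structural observation is that $\widetilde{\Delta_{X\otimes X^*}}$ is the projection onto the image of the isometry $w := \eta_X \circ n_X^{-1} : \id_s \to X^* \otimes X$; indeed $w^\dagger \circ w = n_X^{-1} \circ \eta_X^\dagger \circ \eta_X \circ n_X^{-1} = n_X^{-1}\circ d_X \circ n_X^{-1} = \id_{\id_s}$, using $\eta_X^\dagger \circ \eta_X = d_X = n_X^2$ and its invertibility (separability of $X$). Consequently the range of $\widetilde{\Delta_{X\otimes X^*}}$ is a copy of $\id_s$, and conjugation by $w$ identifies the corner $\widetilde{\Delta_{X\otimes X^*}} \circ \End(X^*\otimes X)\circ \widetilde{\Delta_{X\otimes X^*}}$ with the \emph{commutative} algebra $\End(\id_s)$. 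Since $\widetilde{f}\geq 0$ has support exactly $\widetilde{\Delta_{X\otimes X^*}}$, it lies in this corner with full support, so $\widetilde{f} = w \circ z \circ w^\dagger$ for a unique positive invertible $z \in \End(\id_s)$.

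It then remains to pin $z$ down using the channel condition. Via Choi's theorem, the counit-preservation condition~\eqref{eq:channeldef} is equivalent to the partial trace of $\widetilde{f}$ over the output factor being a fixed positive invertible element $c \in \End(X)$ (as extracted in the proof of Proposition~\ref{prop:relchancond}; this is the categorical form of the Kraus condition $\sum_i M_i^\dagger M_i = \id$ recovered in Example~\ref{ex:matrixqrel}). Writing $\mathcal{P}$ for this partial trace, the channel condition becomes $L(z) = c$, where $L(z) := \mathcal{P}(w \circ z \circ w^\dagger)$ is linear in $z$. Since the identity channel already furnishes one positive invertible solution by the existence half, it suffices to show $L$ is injective; then $z$ coincides with the value realising $\id_{X\otimes X^*}$, forcing $f = \id_{X\otimes X^*}$.

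The main obstacle is precisely this injectivity of $L$. Here one must verify, through the graphical calculus, that tracing out the output wire of $w\circ z \circ w^\dagger = \eta_X \circ (n_X^{-1} z\, n_X^{-1}) \circ \eta_X^\dagger$ faithfully records the entire commutative datum $z \in \End(\id_s)$ and does not conflate distinct central elements. I expect this to drop out of the snake equations~\eqref{eq:snake} together with the invertibility of $\dim_L(X) = d_X$: separability is exactly what guarantees that the $\End(\id_s)$-structure threaded through $\eta_X$ survives the partial trace non-degenerately. This is the one step demanding a genuine diagrammatic computation rather than formal manipulation, and I would treat it as the crux of the argument.
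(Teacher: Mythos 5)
Your existence half and your structural reduction coincide with the paper's own proof. The paper likewise shows that $s(\widetilde{f}) = \widetilde{\Delta_{X\otimes X^*}}$ forces $\widetilde{f} = \eta_X \circ x \circ \eta_X^{\dagger}$ for a positive invertible $x \in \End(\id_s)$; it obtains invertibility by a minimality-of-support argument ($\pi := \eta_X \circ (d_X^{-1} \otimes s(x)) \circ \eta_X^{\dagger}$ is a projection fixing $\widetilde{f}$ and dominated by the support, hence equals it), where you use the corner $*$-isomorphism $z \mapsto w \circ z \circ w^{\dagger}$ onto $\widetilde{\Delta_{X\otimes X^*}} \circ \End(X^*\otimes X) \circ \widetilde{\Delta_{X\otimes X^*}}$. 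Both arguments are sound and essentially equivalent. The problem is the final step: you reduce uniqueness to injectivity of $L(z) = \mathcal{P}(w \circ z \circ w^{\dagger})$, where $\mathcal{P}$ is the Choi-transported channel constraint, and then leave that injectivity unproven, explicitly flagging it as ``the crux of the argument.'' As written, the proposal is therefore incomplete exactly at the point where all the work happens.

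The gap is real but closes precisely as you anticipate, and the missing computation is short. By the snake equations~\eqref{eq:snake}, the output-side contraction of $\eta_X \circ y \circ \eta_X^{\dagger}$ collapses to the whiskering $\id_X \otimes y$: indeed $(\epsilon_X \otimes \id_X) \circ \bigl(\id_X \otimes (\eta_X \circ y \circ \eta_X^{\dagger})\bigr) \circ (\epsilon_X^{\dagger} \otimes \id_X) = \id_X \otimes y$, and the dimension factors appearing in the actual constraint (as in the diagram of Proposition~\ref{prop:relchancond}) are invertible elements of the commutative algebra $\End(\id_s)$, so they do not affect injectivity. Whiskering by $X$ is injective on $\End(\id_s)$: if $\id_X \otimes y = 0$, then by the interchange law $0 = \eta_X^{\dagger} \circ (\id_{X^*\otimes X} \otimes y) \circ \eta_X = \eta_X^{\dagger} \circ \eta_X \circ y = d_X \circ y$, and invertibility of $d_X$ (separability of $X$) gives $y = 0$. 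So $L$ is injective and your argument goes through. Note, however, that once this computation is in hand, the detour through ``one known solution plus injectivity of an affine constraint'' buys nothing: the paper substitutes $\widetilde{f} = \eta_X \circ x \circ \eta_X^{\dagger}$ directly into the channel condition~\eqref{eq:channeldef} via Choi's theorem~\eqref{eq:choi}, reads off $n_X \circ x = n_X$ from the same snake-equation collapse, concludes $x = 1$ by invertibility of $n_X$, and hence $f = \id_{X\otimes X^*}$ by bijectivity of the Choi correspondence. That is the identical diagrammatic step, performed once, without the functional-analytic packaging.
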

\begin{proof}
If $f$ is the identity channel then $\mathfrak{R}(f)$ is the discrete confusability graph by functoriality. In the other direction, we claim that if $\mathfrak{R}(f)$ is the discrete confusability graph then this implies that $\widetilde{f} \in \End(X^* \otimes X)$ has the following form, for some positive invertible $x \in \End(\id_s)$:
\begin{align*}
\includegraphics[scale=.8]{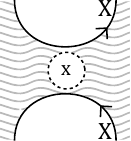}
\end{align*}
In inline notation, $\widetilde{f} = \eta_X \circ x \circ \eta_X^{\dagger}$, where $\eta_X$ is the cup of the duality on $X$. To see that the claim is true, observe that conjugating $\widetilde{f}$ by its support $\widetilde{\Delta_{X\otimes X^*}}$ we have $\widetilde{f} = \eta_X \circ (d_X^{-2} \otimes (\eta_X^{\dagger} \circ \widetilde{f} \circ \eta_X)) \circ  \eta_X^{\dagger}$; therefore let $x = d_X^{-2} \otimes (\eta_X^{\dagger} \circ \widetilde{f} \circ \eta_X)$. Clearly $x$ is positive. To see that it is invertible, take its support $s(x)$ and observe that $\pi := \eta_X \circ (d_X^{-1} \otimes s(x)) \circ \eta_X^{\dagger}$ is a projection satisfying $\pi \widetilde{f} = \widetilde{f}$, and $\pi \leq \widetilde{\Delta_{X\otimes X^*}} = s(\widetilde{f})$. By minimality of the support among such projections, we must have $\pi = \Delta_{X\otimes X^*}$, which implies $s(x) = 1$; therefore $x$ is invertible.

Finally, the trace preservation condition~\eqref{eq:channeldef} then implies that $n_X \circ x = n_X$, which by invertibility of $n_X$ gives $x = 1$.
\end{proof}
\begin{theorem}\label{thm:reversal}
A channel $f: X \otimes X^* \to Y \otimes Y^*$ is reversible if and only if its confusability graph is discrete. 
\end{theorem}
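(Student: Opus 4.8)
The plan is to prove both implications by reducing to the characterisation of the identity channel in Lemma~\ref{lem:idchandisc}, using functoriality of $\mathfrak{R}$ (Proposition~\ref{prop:relfct}) to pass freely between channels and their underlying relations. Throughout, the discreteness of the confusability graph is read as the statement $\mathfrak{R}(f)^\dagger \circ \mathfrak{R}(f) = \Delta_{X \otimes X^*}$ in $\QRel(G)$.

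For the forward implication, suppose $f$ is reversible, with a channel $g: Y \otimes Y^* \to X \otimes X^*$ satisfying $g \circ f = \id_{X \otimes X^*}$. First I would invoke the postprocessing bound of Lemma~\ref{lem:postprocessing} to conclude that the confusability graph $\mathfrak{R}(f^\dagger \circ f)$ is contained in the confusability graph of $g \circ f$. Since $g \circ f = \id_{X \otimes X^*}$, functoriality gives that the latter is $\mathfrak{R}(\id) = \Delta_{X \otimes X^*}$, so the confusability graph of $f$ is contained in $\Delta_{X \otimes X^*}$. As it is itself a confusability graph it also \emph{contains} $\Delta_{X \otimes X^*}$ (Proposition~\ref{prop:confgraphfromchannel}), and the squeeze forces equality, i.e.\ discreteness.

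For the backward implication, the strategy is to build a single channel $g$ with $\mathfrak{R}(g) \circ \mathfrak{R}(f) = \Delta_{X \otimes X^*}$; then $g \circ f$ is a channel whose underlying relation is discrete by functoriality, and Lemma~\ref{lem:idchandisc} immediately yields $g \circ f = \id_{X \otimes X^*}$, proving reversibility. The discreteness hypothesis says precisely that the converse relation $\mathfrak{R}(f)^\dagger$ is coinjective, hence a partial function in the sense of Definition~\ref{def:function}, since $\mathfrak{R}(f)^\dagger \circ (\mathfrak{R}(f)^\dagger)^\dagger = \mathfrak{R}(f)^\dagger \circ \mathfrak{R}(f) = \Delta_{X\otimes X^*}$. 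The natural candidate for $g$ is thus a channel realising $\mathfrak{R}(f)^\dagger$, which I would try to produce via Proposition~\ref{prop:relchancond}.

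The main obstacle is that $\mathfrak{R}(f)^\dagger$ need not itself be the underlying relation of a channel: if $f$ fails to be surjective there is an ``unreached'' subspace of $Y$ on which the partial trace in~\eqref{eq:partialp} degenerates, exactly as an output letter with no preimage obstructs a classical reversal. I would resolve this by enlarging the reversed relation on that subspace. Concretely, letting $M \in \End(Y)$ be the partial trace of $\widetilde{\mathfrak{R}(f)^\dagger}$ and $q := \id_Y - s(M)$ its annihilator, I would add to $\widetilde{\mathfrak{R}(f)^\dagger}$ a ``reset'' term $\omega \otimes q$ with $\omega$ a positive $G$-invariant element, chosen so that (i) its support is orthogonal to that of $\widetilde{\mathfrak{R}(f)^\dagger}$, and (ii) the resulting projection has invertible partial trace $M + (\Tr\,\omega)\,q$. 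Property (ii) lets Proposition~\ref{prop:relchancond} produce an actual channel $g$ realising this enlarged relation, while property (i) — together with the fact that the reset term lives on the subspace of $Y$ that $f$ never reaches — ensures the extra term composes to zero against $\mathfrak{R}(f)$, so that $\mathfrak{R}(g) \circ \mathfrak{R}(f) = \mathfrak{R}(f)^\dagger \circ \mathfrak{R}(f) = \Delta_{X \otimes X^*}$. Covariance is automatic, as every projection and partial trace is taken inside $\TwoRep(G)$. The delicate points to check are the support orthogonality and the vanishing cross term, both of which reduce to the marginal domination $\widetilde{\mathfrak{R}(f)^\dagger} \leq \id_{X^*} \otimes s(M)$.
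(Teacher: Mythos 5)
Your proof is correct, and its skeleton coincides with the paper's: the forward direction (Lemma~\ref{lem:postprocessing} squeezed against Proposition~\ref{prop:confgraphfromchannel}) is essentially identical to the paper's argument, and in the backward direction you construct the very same reversing channel --- the converse relation $\widetilde{\mathfrak{R}(f^{\dagger})}$ enlarged by a reset term supported on the corner $q = \id_Y - s(M)$ of $Y$ that $f$ never reaches --- and close with Lemma~\ref{lem:idchandisc}. The difference is in how the verification is organised. The paper writes down an explicit positive Choi element $\widetilde{g} \in \End(X^{*} \otimes Y)$ (its reset term is your $\omega \otimes q$ with $\omega = \id_{X^{*}}$, up to the invertible normalisers $n$), first proving via the partial-function isometry characterisation~\eqref{eq:partialfunctionisom} that the marginal~\eqref{eq:reversalprojdef} is itself a projection, then checking positivity, the channel condition~\eqref{eq:channeldef}, and the support of the Choi element of $g \circ f$ by direct diagram manipulation. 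You instead stay at the level of relations and outsource the analysis: Proposition~\ref{prop:relchancond} --- which the paper proves but never actually invokes in its own proof of this theorem --- manufactures the channel $g$ from the enlarged projection, and functoriality (Proposition~\ref{prop:relfct}) gives $\mathfrak{R}(g \circ f) = \mathfrak{R}(g) \circ \mathfrak{R}(f) = \Delta_{X \otimes X^{*}}$ in one stroke. Your route is more modular (no diagrammatic positivity or counit computations, and you only ever need the support $s(M)$, not the fact that the normalised marginal is a projection); the paper's route buys an explicit closed-form reversal channel and keeps the argument self-contained.

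Two points need tightening. First, for $\widetilde{\mathfrak{R}(f^{\dagger})} + \omega \otimes q$ to be a relation --- a projection, as Proposition~\ref{prop:relchancond} requires --- you must take $\omega$ to be a projection (e.g.\ $\omega = \id_{X^{*}}$), not merely a positive element, or else pass to the support of the sum; as written, ``$\omega$ a positive $G$-invariant element'' does not suffice. Second, your marginal domination $\widetilde{\mathfrak{R}(f^{\dagger})} \leq \id_{X^{*}} \otimes s(M)$ is indeed the crux, and it does hold: the partial trace over the $X^{*}$ leg is positive and faithful (it is compatible with the faithful trace~\eqref{eq:qtrace}), so the computation $\Tr_{X^{*}}\bigl((\id_{X^{*}} \otimes q) \circ \widetilde{\mathfrak{R}(f^{\dagger})} \circ (\id_{X^{*}} \otimes q)\bigr) = q \circ M \circ q = 0$ forces $(\id_{X^{*}} \otimes q) \circ \widetilde{\mathfrak{R}(f^{\dagger})} = 0$, which is both the orthogonality in (i) and, after transposing the identity onto the $Y^{*}$ leg of $\widetilde{\mathfrak{R}(f)}$ via the sliding equations~\eqref{eq:sliding}, exactly the vanishing of your cross term. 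With these two repairs the argument goes through as stated.
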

\begin{proof}
The `only if' direction is straightforward. Indeed, if $g \circ f = \id_{X \otimes X^*}$ then the confusability graph of $g \circ f$ is discrete. By Lemma~\ref{lem:postprocessing}, it follows that the confusability graph of $f$ is a subgraph of the discrete graph. However, it also contains the discrete graph, since $f$ is a channel. Therefore the confusability graph of $f$ must be precisely $\Delta_{X\otimes X^*}$.

For the `if' direction, let us suppose that the confusability graph of the channel $f$ is discrete. We will construct a left inverse for $f$. Since $\mathfrak{R}(f^{\dagger} \circ f) = \Delta_{X\otimes X^*}$, we have that $\mathfrak{R}(f^{\dagger}): Y \otimes Y^* \to X \otimes X^*$ is a partial function. Recall that $\widetilde{\mathfrak{R}(f^{\dagger})} = \widetilde{\mathfrak{R}(f)}^*$. We will now show that the following element of $\End(Y)$ is a projection:
\begin{calign}\label{eq:reversalprojdef}
\includegraphics[scale=.8]{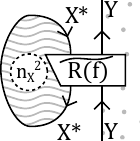}
\end{calign}
It is clearly positive. Let $\iota: E \to X^* \otimes Y$ be an isometry splitting the projection $\widetilde{\mathfrak{R}(f^{\dagger})}$, i.e. $\iota \circ \iota^{\dagger} = \widetilde{\mathfrak{R}(f^{\dagger})}$ (this always exists by local semisimplicity of $\TwoRep(G)$); then idempotency is seen as follows:
\begin{calign}
\includegraphics[scale=.8,valign=c]{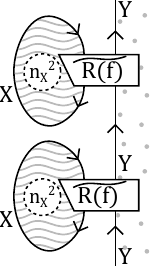}
~~=~~
\includegraphics[scale=.8,valign=c]{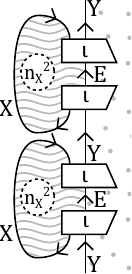}
~~=~~
\includegraphics[scale=.8,valign=c]{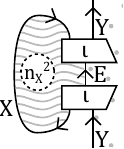}
~~=~~
\includegraphics[scale=.8,valign=c]{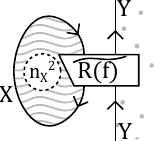}
\end{calign} 
Here for the second equality we used the characterisation~\eqref{eq:partialfunctionisom} of a partial function in terms of the isometry $\iota$.

We define the following positive element $\widetilde{g} \in \End(X^* \otimes Y)$:
\begin{calign}
\includegraphics[scale=.8,valign=c]{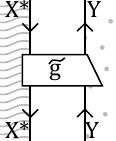}~
~:=~
\includegraphics[scale=.8,valign=c]{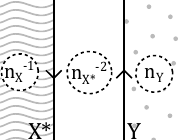}~
-~
\includegraphics[scale=.8,valign=c]{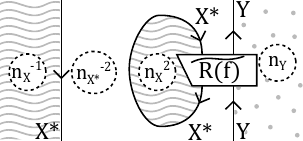}
~+~
\includegraphics[scale=.8,valign=c]{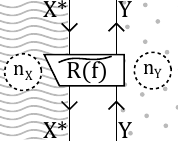}
\end{calign}
This is a positive element because the first two terms are a projection $\id_{X^*} \otimes \id_Y - \id_{X^*} \otimes \alpha$ (where $\alpha \in \End(Y)$ is the projection~\eqref{eq:reversalprojdef}) conjugated by $n_X^{-1/2} \otimes \id_{X^*} \otimes n_{X^*}^{-1} \otimes \id_Y \otimes n_Y^{1/2}$, and the last term is clearly positive. 
The following equation shows that the corresponding CP morphism $g: Y \otimes Y^* \to X \otimes X^*$ is a channel~\eqref{eq:channeldef}:
\begin{align*}
\includegraphics[scale=.8,valign=c]{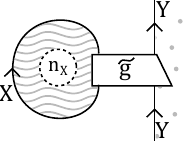}
~~&=~~
\includegraphics[scale=.8,valign=c]{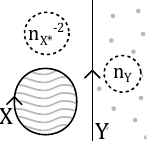}
~-~
\includegraphics[scale=.8,valign=c]{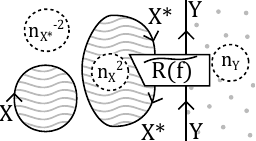}
~+~
\includegraphics[scale=.8,valign=c]{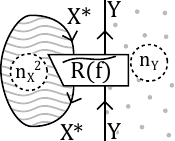}
\\
&=~~
\includegraphics[scale=.8,valign=c]{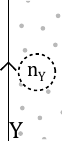}
~-~
\includegraphics[scale=.8,valign=c]{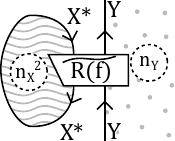}
~+~
\includegraphics[scale=.8,valign=c]{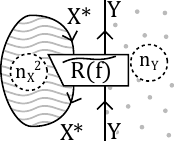}
\\
&=~~
\includegraphics[scale=.8,valign=c]{pictures/zeroerror/reversalpf21.pdf}
\end{align*}
Using functoriality of $\mathfrak{R}$, we can conjugate $\widetilde{\mathfrak{R}(f^{\dagger})} \odot \widetilde{f}$ by its support $\widetilde{\Delta_{X \otimes X^*}}$ to obtain the following equation:
\begin{calign}\nonumber
\includegraphics[scale=.8,valign=c]{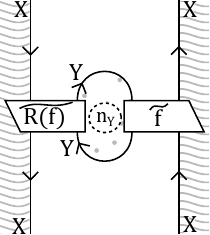}
~~=~~
\includegraphics[scale=.8,valign=c]{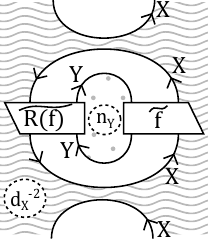}
~~=~~
\includegraphics[scale=.8,valign=c]{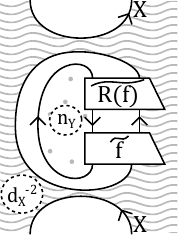}
\\\label{eq:reversal1}
=~~
\includegraphics[scale=.8,valign=c]{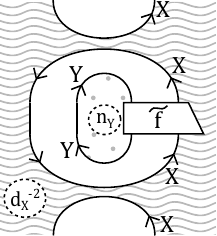}
~~=~~
\includegraphics[scale=.8,valign=c]{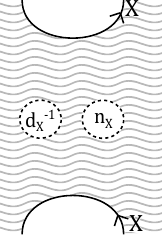}
\end{calign}
Here the second equality is by the sliding equations~\eqref{eq:sliding}; the third equality is by the fact that $\widetilde{\mathfrak{R}(f)}$ is the support of $\widetilde{f}$ by definition; and the fourth equality is by the fact that $f$ is a channel~\eqref{eq:channeldef}.

Using~\eqref{eq:reversal1}, we can show that $g$ is a left inverse for $f$:
\begin{align*}
\includegraphics[scale=.8,valign=c]{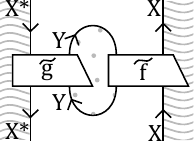}
~~&=~~
\includegraphics[scale=.8,valign=c]{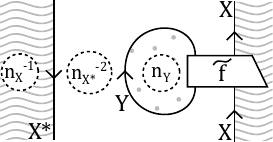}
~-~
\includegraphics[scale=.8,valign=c]{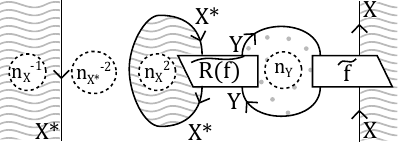}
\\&~~~~~~~~~~~~
~+~
\includegraphics[scale=.8,valign=c]{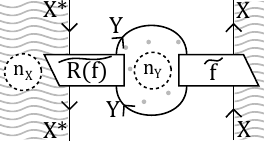}
\\
&=~~
\includegraphics[scale=.8,valign=c]{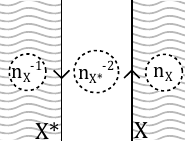}
~-~
\includegraphics[scale=.8,valign=c]{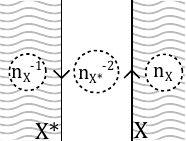}
~+~
\includegraphics[scale=.8,valign=c]{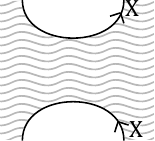}
\\
&=~~
\includegraphics[scale=.8,valign=c]{pictures/zeroerror/reversalpf43.pdf}
\end{align*}
Here the second equality uses~\eqref{eq:channeldef} and~\eqref{eq:reversal1}. The support of the final diagram is the discrete confusability graph, so since $g \circ f$ is a channel it must be the identity channel by Lemma~\ref{lem:idchandisc}. 
\end{proof}
\ignore{
\noindent
Using Proposition~\ref{} it is straightforward to derive an error-correction result generalising the Knill-Laflamme conditions~\cite{}.
\begin{definition}
Let $f: X \otimes X^* \to Y \otimes Y^*$ be a channel, and let $Z \otimes Z^*$ be another system. We say that an \emph{encoding channel} $E: Z \otimes Z^* \to X \otimes X^*$ and a \emph{decoding channel} $D: Y \otimes Y^* \to Z \otimes Z^*$ form an \emph{error correction scheme} for the system $Z \otimes Z^*$ over the channel $f$ if $D \circ f \circ E = \id_{Z \otimes Z^*}$.
\end{definition}
\begin{corollary}[Covariant Knill-Laflamme conditions]
Let $f: X \otimes X^* \to Y \otimes Y^*$ be a channel, let $\Gamma_f$ be its confusability graph, and let $Z \otimes Z^*$ be another system. A channel $E: Z \otimes Z^* \to X \otimes X^*$ is the encoding channel for an error correction scheme for $Z \otimes Z^*$ over $f$ precisely when it is a homomorphism $(Z \otimes Z^*,D_{Z \otimes Z^*}) \to (X \otimes X^*,\Gamma_f)$.
\end{corollary}
\begin{proof}
\end{proof}
}
\ignore{
\begin{example}[Knill-Laflamme conditions]
We will show how Corollary~\ref{} generalises the Knill-Laflamme conditions for quantum error correction~\cite{}. Suppose one has a concrete channel $f: B(H) \to B(K)$, for two finite dimensional Hilbert spaces $H,K$. One wants to find a Hilbert space $C$ (a \emph{code space}) and an error correction scheme for $B(C)$ over $f$. 

Let $\mathcal{T} = \Hilb$; now Hilbert spaces correspond to 1-morphisms $\mathcal{T} \to \mathcal{T}$ in $\Mod(\mathcal{T})$, and $B(H) \cong H \otimes H^*$. Note that $\mathcal{T}$ is a simple object of $\Mod(\mathcal{T})$, i.e. $\End(\id_{\mathcal{T}}) \cong \mathbb{C}$. By Theorem~\ref{}, a channel $E: C \otimes C^* \to H \otimes H^*$ is the encoding channel of an error correction scheme for $C \otimes C^*$ precisely when~\eqref{} is obeyed. This implies that the confusability graph of $E$ is the discrete graph. Taking Kraus maps $\{E_i\}$ for $E$, one therefore has the following equation:
\begin{calign}
\end{calign}
This implies in particular that $E_i^{\dagger} E_i = \lambda_i \mathbbm{1}$ for every $i$, where $\lambda_i \in \mathbb{C}$ is some nonzero scalar. Now we also take Kraus maps $F_i$ for $\Gamma_f$; then~\eqref{} may be written as follows:
\begin{calign}
\end{calign}
This implies in particular an equality of subspaces $E_i^{\dagger} \circ S \circ E_i = \mathbb{C} \mathbbm{1}_{C}$, where $S \subseteq L(H \to H)$ is the subspace of linear maps associated to the graph $\Gamma_f$.

Now these conditions would still hold if we just restrict to one Kraus map $E_i$ for $E$, so we can assume $E$ is a channel corresponding to an isometry $\iota: C \to E$. Then as we have seen~\eqref{} simply comes down to $$\iota^{\dagger} \circ  S \circ \iota = \mathbb{C} \mathbbm{1}_{C},$$ which is precisely the condition~\cite{}.
\end{example}
}

\section{Covariant zero-error source-channel coding}\label{sec:scc}

We finish by considering zero-error source-channel coding. A long list of operational scenarios which can be formulated as quantum zero-error source-channel coding problems was given in~\cite[\S{}IV]{Stahlke2015}. 

In order to define a tensor product of arbitrary systems, we will here restrict to the case where $G$ is a quasitriangular compact quantum group. (See Appendix~\ref{app:monoidal} for the definition of the tensor product.) We recall the definition of zero-error source-channel coding from the introduction.

\begin{definition}[Covariant source channel coding]
Alice and Bob share a covariant communication channel $N: A \to B$. Charlie wants to send the state of a system $S$ to Bob. To do this, he will transmit information to Alice (a state of a system $O_A$) and some `side information' to Bob (a state of a system $O_B$). This transmission is defined by a covariant channel $C: S \to O_A \otimes O_B$. This data defines the zero-error source-channel coding problem.

The problem is as follows: Alice must use the channel $N$ to transmit information to Bob in order that Bob can recover the original state of the system $S$. Such a procedure is defined by an covariant encoding channel $E: O_A \to A$ (performed by Alice) and a covariant decoding channel $D: B \otimes O_B \to S$ (performed by Bob), and is called a \emph{covariant zero-error source-channel coding scheme}.

Mathematically, a valid choice of $E$ and $D$ is specified as follows. Let $A \cong X \otimes X^*$, $B \cong Y \otimes Y^*$, $S \cong Z_S \otimes Z_S^*$, $O_A \cong Z_A \otimes Z_A^*$ and $O_B \cong Z_B \otimes Z_B^*$  be the systems involved; here $X: r_0 \to x$, $Y: r_0 \to y$, $Z_S: r_0 \to s$, $Z_A: r_0 \to a$ and $Z_B: r_0 \to b$ are 1-morphisms in $\TwoRep(G)$.

 Let $\tau: X \to Y \otimes E_N$ be a dilation of the channel $N$, let $\beta: Z_S \to (Z_A \boxtimes Z_B) \otimes E_C$ be a dilation of the channel $C$ (here $E_C$ has type $a \boxtimes b \to s$) , let $\epsilon: Z_A \to X \otimes E_{E}$ be a dilation of the encoding channel $E$, and let $\delta: B \boxtimes Z_B \to Z_S \otimes E_D$ be a dilation of the decoding channel $D$ (here $E_D$ has type $s \to y \boxtimes b$). Then we require the following equation:
\begin{calign}\label{eq:scceq}
\includegraphics[scale=.8,valign=c]{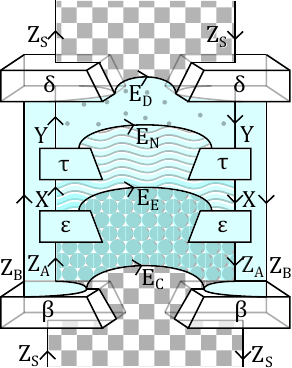}
~~=~~
\includegraphics[scale=.8,valign=c]{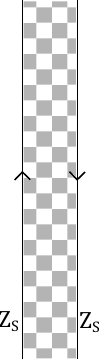}
\end{calign}
Here we have used the 3-dimensional graphical calculus for monoidal 2-categories. We have coloured the $s$-region with chequerboard shading, the $a$-region with packed circles, the $x$-region with wavy lines, the $y$-region with polka dots, and the $b$-region with a translucent blue colour. Note that in the left hand diagram, in between the $\beta$ and the $\delta$-levels there are two planes; a $b$-plane in the foreground between the $Z_B$ wires, and another plane in the background on which the $\tau$ and $\epsilon$ 2-morphisms are located.

Note that although we have drawn the $\beta$ and $\delta$ boxes as 3-dimensional objects in order to represent the fact that they map to or from a tensor product of 1-morphisms, they are really just the same 2-morphism boxes as we have been dealing with throughout this work. Every 2-morphism has a dagger, a transpose and a conjugate defined by the dagger structure and the duality on 1-morphisms; these are represented by reorienting the box, as in~\eqref{eq:boxesdefine}. For the 3-dimensional boxes we have represented the dagger, transpose and conjugate by reorienting the 3-dimensional box in the same way. For example, the box on the bottom left of the LHS of~\eqref{eq:scceq} represents $\beta$, and the box on the bottom right of that diagram represents the conjugate $\beta_*$. 
\end{definition}
\noindent
We will now show that the encoding channel $E$ in a valid source channel coding scheme is precisely a homomorphism between two confusability graphs. The target graph of this homomorphism is the confusability graph of the channel $N$. The source graph of this homomorphism is the \emph{confusability graph of the source $C$}, which we will now define. 

First observe that $C$ must be a reversible channel, or it would obviously be impossible to define a zero-error source-channel coding scheme. Its confusability graph is therefore discrete by Theorem~\ref{thm:reversal}. 

Suppose that some encoding channel $E$ is given. Let $Q$ be the confusability graph of the channel $N \circ E$. For a decoding channel $D$ to exist satisfying~\eqref{eq:scceq}, the channel $((N \circ E) \otimes \id_{O_B}) \circ C$ must be reversible. By Theorem~\ref{thm:reversal} and Proposition~\ref{prop:relfct}, that is precisely to say that the support of the following positive element of $\End(Z_S^* \otimes Z_s)$ is the projection corresponding to the discrete confusability graph~\eqref{eq:discconfgraph}:
\begin{calign}\label{eq:discrrequirement}
\includegraphics[scale=.8,valign=c]{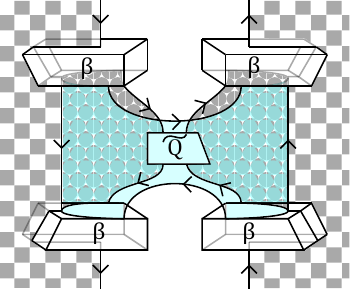}
\end{calign}
\noindent
(Note that, again, there are two planes in the centre of the diagram; the blue $b$-plane is in the foreground, and the $a$ plane with packed circles, with which the 2-morphism $Q$ interacts, is in the background. Again, we have used 3-dimensional 2-morphism boxes and represented the dagger, transpose and conjugate by reorienting the box, so the box on the bottom right represents $\beta$, the box on the bottom left its conjugate, the box on the top left its transpose, and the box on the top right its dagger.) 

This is precisely to say that the annihilator of~\eqref{eq:discrrequirement} is the projection $\widetilde{\Delta_{X \otimes X^*}^{\perp}} = 1-\widetilde{\Delta_{X \otimes X^*}}$ of the complete simple quantum graph. We then have the following series of implications:
\begin{calign}\nonumber
\includegraphics[scale=.7,valign=c]{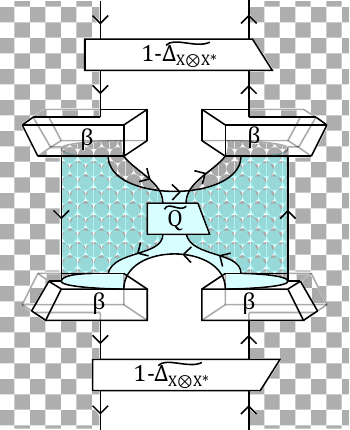}= 0
~~~\Leftrightarrow~~~
\Tr_s\left(~\includegraphics[scale=.7,valign=c]{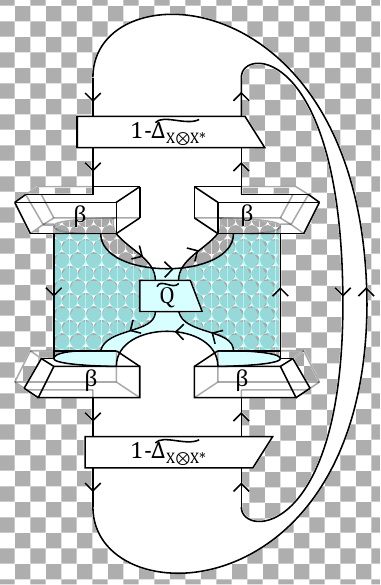}~\right)=0
\end{calign}
\begin{calign}\nonumber
\Leftrightarrow~~~\Tr_s\left(~\includegraphics[scale=.7,valign=c]{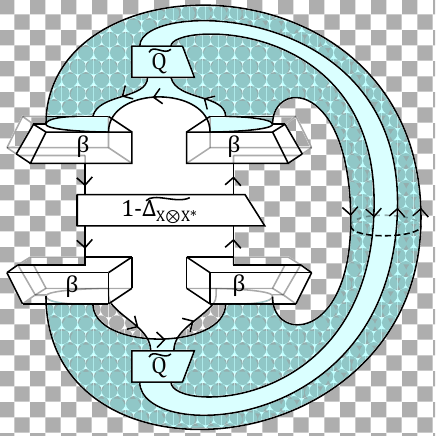}~\right)= 0
\end{calign}
\begin{calign}\nonumber
\Leftrightarrow~~~
\Tr_{a \boxtimes b}\left(~\includegraphics[scale=.7,valign=c]{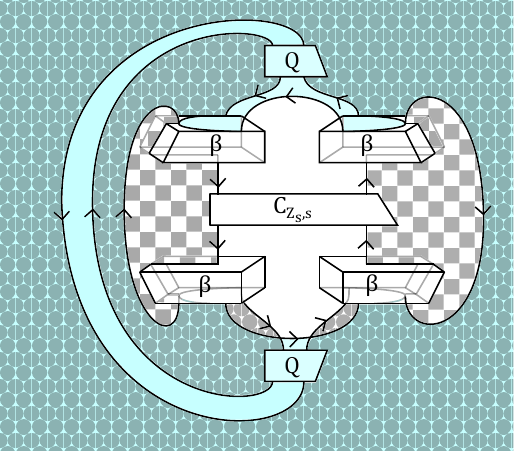}~\right) = 0
\end{calign}
\begin{calign}\nonumber
\Leftrightarrow~~~
\Tr_b\left(~\includegraphics[scale=.7,valign=c]{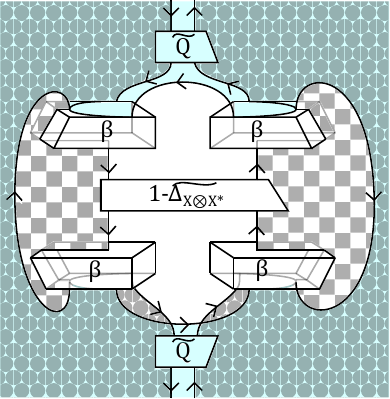}~\right) = 0
\end{calign}
Here the first equivalence is by positivity and faithfulness of the trace~\eqref{eq:qtrace}. The second equivalence is by using $\widetilde{Q} = \widetilde{Q}^{\dagger} \circ \widetilde{Q}$; pulling $\widetilde{\Delta_{X \otimes X^*}^{\perp}}$, $\beta^{*} \otimes \beta^{\dagger}$ and $\widetilde{Q}^{\dagger}$ around the loop; and using $(\widetilde{\Delta_{X \otimes X^*}^{\perp}})^{\dagger} \circ \widetilde{\Delta_{X \otimes X^*}^{\perp}} = \widetilde{\Delta_{X \otimes X^*}^{\perp}}$. The third equivalence is by the fact that the left trace is equal to the right trace~\eqref{eq:qtrace}. The fourth equivalence is by positivity and faithfulness of the traces $\Tr_{a \boxtimes b}$ and $\Tr_a$ and the partial trace $\Tr_b$, and the fact that $\Tr_{a \boxtimes b} = \Tr_a \circ \Tr_b$ (Lemma~\ref{lem:partialtrace}). 

The last equation says precisely that $\widetilde{Q} \leq \Ann(\widetilde{f})$, where $\widetilde{f} \in \End(Z_A^* \otimes Z_A)$ is the following positive element:
\begin{calign}\label{eq:ftilde}
\Tr_b\left(~\includegraphics[scale=.8,valign=c]{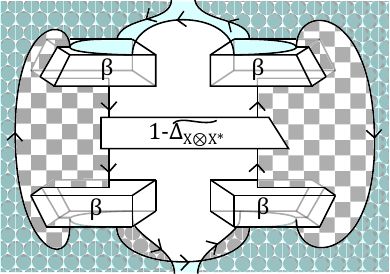}~\right)
\end{calign} 
We will show that $s(\widetilde{f})$ is a projection defining a simple quantum graph, which implies that $\Ann(\widetilde{f})$ is a projection defining a quantum confusability graph. It is clear that $s(\tilde{f})$ is symmetric, since $\widetilde{f}$ is symmetric; we therefore need only show that $s(\widetilde{f})$ is orthogonal to the discrete confusability graph, i.e. 
\begin{calign}\label{eq:confgraphorthogeq}
\widetilde{\Delta_{Z_A \otimes Z_A^*}} \circ \tilde{f} \circ \widetilde{\Delta_{Z_A \otimes Z_A^*}} = 0
\end{calign} 
which is seen as follows (here we have used positivity and faithfulness of the trace $\Tr_a$ to reduce~\eqref{eq:confgraphorthogeq} to the first expression being equal to zero):
\begin{calign}\nonumber
\Tr_{a \boxtimes b}\left(~\includegraphics[scale=.8,valign=c]{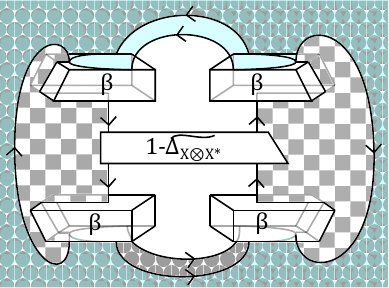}~\right)
~~=~~
\Tr_{s}\left(~\includegraphics[scale=.8,valign=c]{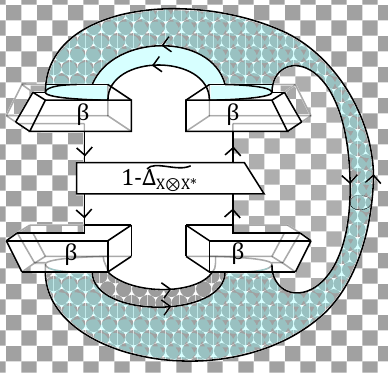}~\right)
\\\nonumber
=~~
\Tr_{s}\left(~\includegraphics[scale=.8,valign=c]{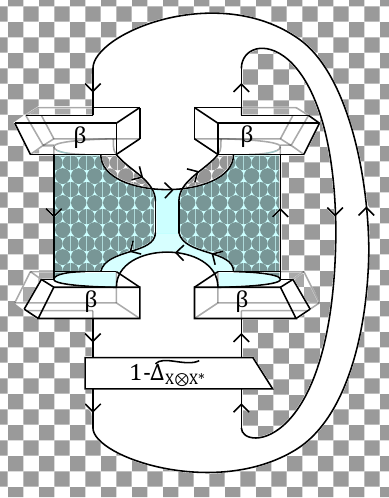}~\right)
~~=~~
\Tr_{s}\left(~\includegraphics[scale=.8,valign=c]{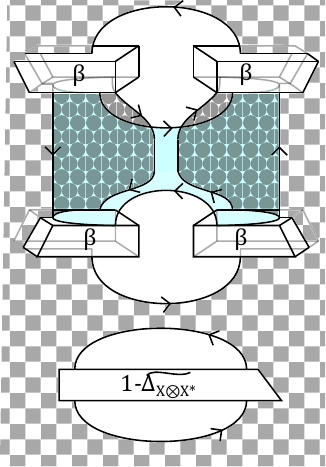}~\right)
~~=~~0
\end{calign}
Here the first equality is by equality of the left and right trace~\eqref{eq:qtrace}; the second equality is by pulling $\beta^* \otimes \beta^{\dagger}$ around the loop; the third equality is by discreteness of the confusability graph of the channel $C$, i.e. $\Delta_{Z_S \otimes Z_S^*}\circ \mathfrak{R}(C^{\dagger} \circ C) \circ \Delta_{Z_S \otimes Z_S^*} = \mathfrak{R}(C^{\dagger} \circ C)$; and the fourth equality is by simplicity of the graph $\Delta_{Z_S \otimes Z_S^*}$.

We therefore make the following definition.
\begin{definition}\label{def:confsourcegraph}
Let $C: S \to O_A \otimes O_B$ be a channel, and let $\beta: Z_S \to (Z_A \boxtimes Z_B) \otimes E$ be a dilation. Then we define the \emph{confusability graph of the source $C$}, written $(O_A,\Gamma_C)$, to be the confusability graph on $O_A$ given by the projection 
$$
\widetilde{\Gamma_C}:=(1-s(\widetilde{f})) \in \End(Z_A^* \otimes Z_A)
$$
where $\widetilde{f} \in \End(Z_A^* \otimes Z_A)$ is the positive element shown in~\eqref{eq:ftilde}.
\end{definition}
\begin{theorem}\label{thm:scchoms}
A channel $E: O_A \to A$ is an encoding channel for a source channel coding problem defined by a source $C:S \to O_A \otimes O_B$ and a communication channel $N: A \to B$ precisely when it is a graph homomorphism from the confusability graph of the source $(O_A,\Gamma_C)$ to the confusability graph of the channel $(A,\Gamma_N)$.
\end{theorem}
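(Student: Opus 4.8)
The plan is to reduce the statement to the reversibility criterion of Theorem~\ref{thm:reversal} together with the chain of equivalences already established in the discussion preceding the theorem, and then to identify the resulting inequality of graphs with the homomorphism condition of Definition~\ref{def:graphhoms} by functoriality of $\mathfrak{R}$.

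First I would unwind the definition of a valid encoding channel. By the defining equation~\eqref{eq:scceq}, a channel $E$ is an encoding channel precisely when there exists a decoding channel $D: B \otimes O_B \to S$ with $D \circ M = \id_S$, where $M := ((N \circ E) \otimes \id_{O_B}) \circ C: S \to B \otimes O_B$ is the overall transmission channel; this reversing channel $D$ has exactly the right type $B \otimes O_B \to S$. In other words, $E$ is a valid encoding channel if and only if $M$ is reversible, and by Theorem~\ref{thm:reversal} this holds if and only if the confusability graph of $M$ is discrete. Second, I would invoke the computation carried out before the theorem statement, which shows that the confusability graph of $M$ is discrete if and only if $\widetilde{Q} \leq \Ann(\widetilde{f}) = \widetilde{\Gamma_C}$, where $Q$ is the confusability graph of the channel $N \circ E$ and $\widetilde{f} \in \End(Z_A^* \otimes Z_A)$ is the positive element~\eqref{eq:ftilde} defining $\Gamma_C$ via Definition~\ref{def:confsourcegraph}. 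That is, $E$ is a valid encoding channel exactly when $Q \leq \Gamma_C$.

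The last step, and the only genuinely new content of the proof, is to recognise $Q$ as the left-hand side of the homomorphism condition. Using that $\mathfrak{R}$ is a unitary functor (Proposition~\ref{prop:relfct}), so that $\mathfrak{R}(N \circ E) = \mathfrak{R}(N) \circ \mathfrak{R}(E)$ and daggers are preserved, I would compute in $\QRel(G)$
\[
Q = \mathfrak{R}(N \circ E)^{\dagger} \circ \mathfrak{R}(N \circ E) = \mathfrak{R}(E)^{\dagger} \circ \Gamma_N \circ \mathfrak{R}(E),
\]
where $\Gamma_N = \mathfrak{R}(N)^{\dagger} \circ \mathfrak{R}(N)$ is the confusability graph of the channel $N$. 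Applying functoriality once more (and using that $\mathfrak{R}(\Gamma_N) = \Gamma_N$, since the projection $\widetilde{\Gamma_N}$ is its own support) rewrites this as $Q = \mathfrak{R}(E^{\dagger} \circ \Gamma_N \circ E)$. Hence the condition $Q \leq \Gamma_C$ reads $\mathfrak{R}(E^{\dagger} \circ \Gamma_N \circ E) \leq \Gamma_C$, which is exactly the defining inequality of Definition~\ref{def:graphhoms} for $E$ to be a homomorphism of confusability graphs $(O_A, \Gamma_C) \to (A, \Gamma_N)$. Chaining the three equivalences then completes the proof.

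I expect no serious obstacle here, since the hard analytic work — the trace manipulations turning discreteness of the confusability graph of $M$ into the inequality $\widetilde{Q} \leq \Ann(\widetilde{f})$ — has already been carried out in establishing Definition~\ref{def:confsourcegraph}. The only points requiring care are bookkeeping: checking that all three graphs ($Q$, $\Gamma_C$, and $\mathfrak{R}(E^{\dagger} \circ \Gamma_N \circ E)$) indeed live on the same system $O_A \cong Z_A \otimes Z_A^*$, and that the composition and dagger used in the functoriality computation are those of $\QRel(G)$ rather than of $\Rep(G)$.
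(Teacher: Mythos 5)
Your proposal is correct and takes essentially the same route as the paper: the paper's proof likewise consists of reducing validity of $E$ to reversibility of $((N\circ E)\otimes \id_{O_B})\circ C$ via Theorem~\ref{thm:reversal}, citing the chain of trace manipulations preceding the theorem to get $\widetilde{Q}\leq \Ann(\widetilde{f})=\widetilde{\Gamma_C}$, and then identifying $\widetilde{Q}$ with the Choi projection of $\mathfrak{R}(E^{\dagger}\circ\Gamma_N\circ E)$ to match Definition~\ref{def:graphhoms}. The only difference is that you spell out this last identification explicitly via functoriality and unitarity of $\mathfrak{R}$ (Proposition~\ref{prop:relfct}), which the paper asserts in one line without computation; this is a welcome clarification rather than a deviation.
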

\begin{proof}
By Definition~\ref{def:graphhoms} the channel $E$ is a graph homomorphism precisely when $\mathfrak{R}(E^{\dagger} \circ \Gamma_N \circ E) \leq \Gamma_C$. But $\widetilde{\mathfrak{R}}(E^{\dagger} \circ \Gamma_N \circ E) = \widetilde{Q}$, and we saw above that $E$ is a valid encoding channel precisely when $\widetilde{Q} \leq \Ann(\widetilde{f}) = \widetilde{\Gamma_C}$.
\end{proof}
\noindent
We will now show that every graph is the confusability graph of some source, which together with Proposition~\ref{prop:graphtochan} and Theorem~\ref{thm:scchoms} gives an operational semantics for covariant homomorphisms of quantum confusability $G$-graphs (or equivalently, simple quantum $G$-graphs).
\begin{proposition}[{c.f. \cite[Thm. 17]{Stahlke2015}}]\label{prop:channelfromsourcegraph}
Let $(O_A,\Gamma)$ be a confusability graph. Then there exists a source $C: S \to O_A \otimes O_B$ such that $(O_A,\Gamma)$ is the confusability graph of the source $C$.
\end{proposition}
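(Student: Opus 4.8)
The plan is to reverse-engineer the construction of Definition~\ref{def:confsourcegraph}. By that definition it suffices to produce a covariant reversible source $C\colon S\to O_A\otimes O_B$ whose associated positive element $\widetilde f\in\End(Z_A^*\otimes Z_A)$ of~\eqref{eq:ftilde} has support $s(\widetilde f)=\widetilde{\Gamma^\perp}=\id_{Z_A^*\otimes Z_A}-\widetilde\Gamma$, the projection of the simple graph complementary to $\Gamma$; for then $\widetilde{\Gamma_C}=\id_{Z_A^*\otimes Z_A}-s(\widetilde f)=\widetilde\Gamma$, i.e. $\Gamma_C=\Gamma$. This is the mirror image of Proposition~\ref{prop:graphtochan}, which realised a confusability graph as the confusability graph of a \emph{channel}, so I would adapt that construction.

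For the construction I would keep the output system $O_A=Z_A\otimes Z_A^*$ fixed and take $S:=O_A$ (i.e. $Z_S:=Z_A$). Writing $p:=\widetilde{\Gamma^\perp}$, which is a projection orthogonal to $\widetilde{\Delta_{Z_A\otimes Z_A^*}}$ by the simple-graph condition, I would split it by an isometry $\iota\colon W\to Z_A^*\otimes Z_A$ with $\iota\circ\iota^\dagger=p$ (possible by local semisimplicity of $\TwoRep(G)$) and use $W$, together with the cups and caps of the duality on $Z_A$, to define the side-information system $O_B$ and a dilation $\beta\colon Z_A\to(Z_A\btimes Z_B)\otimes E_C$. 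Exactly as in Proposition~\ref{prop:graphtochan}, I would build $\beta$ out of two orthogonal pieces reflecting the decomposition $\id=\widetilde\Gamma+\widetilde{\Gamma^\perp}$: a \emph{diagonal} piece built from $\widetilde{\Delta_{Z_A\otimes Z_A^*}}$, present to make $S$ perfectly recoverable from $(O_A,O_B)$, and a \emph{$\Gamma^\perp$-piece} built from $\iota$, which is what the partial trace in~\eqref{eq:ftilde} will see. Inserting the normalising factors $n_{(-)}$ as in Proposition~\ref{prop:graphtochan} and the proof of Theorem~\ref{thm:reversal}, I would check that $\beta$ dilates a genuine channel using the isometry criterion of Lemma~\ref{lem:chancond}, and that the resulting source is reversible by verifying, via Theorem~\ref{thm:reversal} and Lemma~\ref{lem:idchandisc}, that its confusability graph on $S$ is discrete --- this being guaranteed by the diagonal piece, which retains a full copy of the $S$-data.

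Finally I would compute $\widetilde f$ from~\eqref{eq:ftilde}. Tracing out the $b$-region and sliding the $\beta$- and $\beta_*$-boxes around the resulting loop using the sliding equations~\eqref{eq:sliding} collapses the $O_B$- and environment-contributions, and the orthogonality of the two pieces of $\beta$ annihilates the diagonal part (just as $\eta_X^\dagger\circ(\widetilde\Gamma-\widetilde{\Delta_{X\otimes X^*}})=0$ was used in Proposition~\ref{prop:graphtochan}), leaving $\widetilde f$ equal to $\iota$ conjugated by an invertible positive normalisation. Its support is therefore $s(\widetilde f)=\iota\circ\iota^\dagger=p=\widetilde{\Gamma^\perp}$, as required.

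I expect the main obstacle to be arranging the single 2-morphism $\beta$ so that it meets all three demands at once: that $\beta$ dilate a counit-preserving (channel) map, that the induced source be reversible (discrete confusability graph on $S$), and that the partial trace~\eqref{eq:ftilde} return a positive element with support \emph{exactly} $\widetilde{\Gamma^\perp}$. Balancing reversibility against the prescribed support is the delicate point, since the diagonal piece that secures reversibility must be invisible to~\eqref{eq:ftilde}; getting the normalisation factors right so that both the channel condition and the support computation go through simultaneously is the part that requires care, and is precisely where the diagrammatic calculus of $\TwoRep(G)$ does the work.
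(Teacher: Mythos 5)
Your reduction of the problem is correct (produce a source whose positive element $\widetilde f$ from~\eqref{eq:ftilde} has support exactly $\widetilde{\Gamma^{\perp}}$), and your route is genuinely different from the paper's, but it has a gap at the decisive step: the claim that orthogonality of the two pieces of $\beta$ makes the diagonal piece invisible to~\eqref{eq:ftilde}. In~\eqref{eq:ftilde} the dilation enters \emph{twice} --- once for each of the two source states being compared by the complete simple graph projection $\widetilde{\Delta_{Z_S\otimes Z_S^*}^{\perp}}$ --- so writing $\beta=\beta_{\Delta}+\beta_{\Gamma^{\perp}}$ with orthogonal environments decomposes $\widetilde f$ into four terms, indexed by which piece acts on each side. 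Environment orthogonality kills only the mixed pairings within each side; the $(\beta_{\Delta},\beta_{\Delta})$ self-term survives. Moreover each of the four terms is positive (each is a completely positive map applied to the positive element $\widetilde{\Delta_{Z_S\otimes Z_S^*}^{\perp}}$, followed by the faithful positive partial trace $\Tr_b$), so no cancellation is possible and $s(\widetilde f)$ dominates the support of the $(\beta_{\Delta},\beta_{\Delta})$ term. For a diagonal piece that retains the $S$-data on Alice's side --- i.e.\ identity to $O_A$ tensored with a fixed state of $O_B$, which is what ``perfectly recoverable'' plus ``invisible'' forces --- that self-term is a positive multiple of $\widetilde{\Delta_{Z_A\otimes Z_A^*}^{\perp}}$ itself, so $s(\widetilde f)=\widetilde{\Delta_{Z_A\otimes Z_A^*}^{\perp}}$ and the constructed source has discrete confusability graph, not $\Gamma$. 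The failure is already visible classically with trivial $G$: if $C=\tfrac12 C_{\Delta}+\tfrac12 C_{\Gamma^{\perp}}$ with $C_{\Delta}(s)=(s,*)$ for a fixed side-information symbol $*$, then any two distinct source symbols can emit the common side information $*$, so every off-diagonal pair of Alice-inputs must be kept distinguishable, whatever $\Gamma$ is. The analogy with $\eta_X^{\dagger}\circ(\widetilde{\Gamma}-\widetilde{\Delta_{X\otimes X^*}})=0$ in Proposition~\ref{prop:graphtochan} does not transfer: there two orthogonal projections are composed directly, whereas here $\beta_{\Delta}$ conjugates $\widetilde{\Delta_{Z_S\otimes Z_S^*}^{\perp}}$ from the outside, which does not vanish.

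This is precisely the tension you flag in your closing paragraph, but it is not a matter of tuning normalisations: with $S=O_A$, killing the $(\beta_{\Delta},\beta_{\Delta})$ term would require the diagonal piece's side information \emph{alone} to distinguish all source states (classically, a copy of $s$ on an alphabet disjoint from the $\Gamma^{\perp}$-edge alphabet), and producing such a piece covariantly for a general quantum $O_A$ is exactly what your sketch leaves open. The paper resolves the tension structurally rather than by normalisation: it takes $S=\mathbb{C}\oplus\mathbb{C}$, a classical bit, with $O_B$ a copy of $O_A$. Then $\widetilde{\Delta_{Z_S\otimes Z_S^*}^{\perp}}$ is purely off-diagonal (the two edges of the complete simple graph on two points), so the self-terms of the two branches never enter $\widetilde f$ at all; only the cross terms between $\beta_0$ (built from the duality cup, perfectly correlating $O_A$ with $O_B$) and $\beta_1$ (the cup composed with $\widetilde{\Gamma^{\perp}}$) appear, and these compute directly to a positive element with support $\widetilde{\Gamma^{\perp}}$. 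Each branch is separately normalised to an isometry, so the channel condition is immediate, and no delicate balancing between recoverability and invisibility is ever needed.
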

\begin{proof}
Let $O_A$ split as $O_A \cong Z_A \otimes Z_A^*$, where $Z_A: r_0 \to a$. We want to define Charlie's system $S\cong Z_S \otimes Z_S^*$, Bob's system $O_B \cong Z_B \otimes Z_B^*$, and a channel $C: S \to O_A \otimes O_B$ with dilation $\beta: Z_S \to (Z_A \boxtimes Z_B) \otimes E_C$ such that the positive 2-morphism $\widetilde{f} \in \End(Z_A^* \otimes Z_A)$ shown in~\eqref{eq:ftilde} has support $s(\widetilde{f}) = \widetilde{\Gamma^{\perp}}$.

\begin{itemize}
\item We define $S := \mathbb{C} \oplus \mathbb{C}$; that is, the direct sum of the trivial $G$-$C^*$-algebra with itself. This has dilation $Z_S: = \begin{pmatrix}\mathbbm{1} \\ \mathbbm{1}\end{pmatrix} : r_0 \to r_0 \boxplus r_0$, , where $\mathbbm{1}$ is the monoidal unit object of the $C^*$-tensor category $\End(r_0)$. (See~\cite[\S{}6.1]{Verdon2021} for a summary of matrix notation for 1-morphisms between direct sums of objects in $\TwoRep(G)$.)

\item We define Bob's system by $Z_B:= (Z_A \otimes Z_A^*)$. Since $Z_B$ has type $r_0 \to r_0$, the system $O_A \otimes O_B$ can be split as $Z_B \otimes Z_A$, since $Z_B \otimes Z_A \cong Z_A \boxtimes Z_B$. 

\item We define $E_C:= \begin{pmatrix}Z_A^* \\ Z_A^*\end{pmatrix}:a \to r_0 \boxplus r_0$. (Here we have used the isomorphism $Z_A \boxtimes Z_B \cong Z_B \otimes Z_A$.)

\item The dilation $\beta: Z_S \to Z_B \otimes Z_A \otimes E_C$ will have two components $\begin{pmatrix}\beta_0 \\ \beta_1\end{pmatrix}$, corresponding to the two factors of $r_0 \boxplus r_0$; these have type $\beta_0,\beta_1: \mathbbm{1} \to Z_A \otimes Z_A^* \otimes Z_A \otimes Z_A^*$. We define
\begin{align*}
\includegraphics[scale=.8]{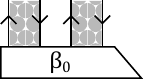}
~~:=~~
\includegraphics[scale=.8]{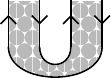}
&&
\includegraphics[scale=.8]{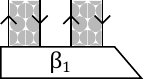}
~~:=~~
\includegraphics[scale=.8]{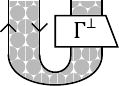}
\end{align*}
Here we have shaded the $a$-region using packed circles. It is clear by positivity and faithfulness of the trace that one can normalise $\beta_0$ and $\beta_1$ by a scalar to make them isometries and thus obtain a channel $C$. 
\end{itemize}
By Example~\ref{ex:classrel} we see that the complete simple graph on $\mathbbm{1} \oplus \mathbbm{1}$ is the complete simple graph on two points $0,1$ in the usual sense, i.e. with edges $0 \rightarrow 1$ and $1 \rightarrow 0$. The positive 2-morphism $\widetilde{f}$ is therefore as follows (up to irrelevant normalising scalars):
\begin{align*}
\includegraphics[scale=.8]{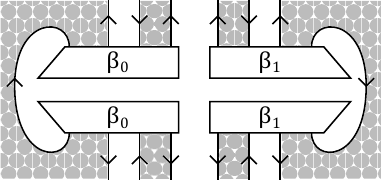}
~~+~~
\includegraphics[scale=.8]{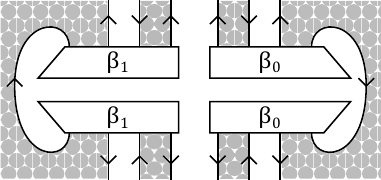}
\end{align*}
\begin{align*}
=~~
\includegraphics[scale=.8]{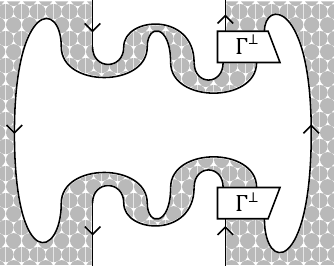}
~~+~~
\includegraphics[scale=.8]{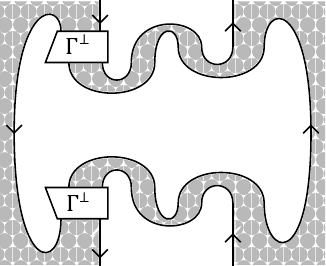}
\end{align*}
\begin{align*}
=~~
\includegraphics[scale=.8]{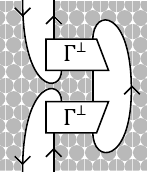}
~~+~~
\includegraphics[scale=.8]{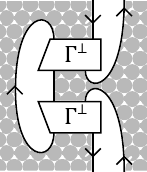}
~~=~~
2
\includegraphics[scale=.8]{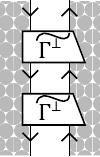}
~~=~~
2
\includegraphics[scale=.8]{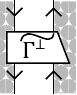}
\end{align*}
The result follows, since the support of the final expression is clearly $\widetilde{\Gamma^{\perp}}$. 
\end{proof}

\bibliographystyle{alphaurl}
\bibliography{bibliography}

\appendix
\section{Appendix}

\subsection{A definition of the 2-category $\TwoRep(G)$}\label{app:tworepdef}

In this appendix we will provide a self-contained definition of the 2-category $\TwoRep(G)$. For more detail see~\cite{Verdon2021}. 

\paragraph{Standard duals in $\Rep(G)$.} The category $\Rep(G)$ of finite-dimensional unitary representations of a compact quantum group $G$ is a rigid $C^*$-tensor category. In particular, every object $X$ has a \emph{standard} right dual; this is an object $X^*$, together with cup and cap morphisms $\eta: \mathbbm{1} \to X^* \otimes X$ and $\epsilon: X \otimes X^* \to \mathbbm{1}$  obeying the snake equations~\eqref{eq:snake} (here $\mathbbm{1}$ is the tensor unit of $\Rep(G)$). This right dual is also a left dual by the cup and cap $\epsilon^{\dagger}: \mathbbm{1} \to X \otimes X^*$ and $\eta^{\dagger}: X^* \otimes X \to \mathbbm{1}$. 

The standard dual $[X^*,\epsilon,\eta]$ is defined up to unitary equivalence by the property that for any morphism $f \in \End(X)$ the left trace is equal to the right trace:
\begin{equation}\label{eq:standardrepg}
\epsilon \circ (f \otimes \id_{X^*}) \circ \epsilon^{\dagger}
~=~
\eta^{\dagger} \circ (\id_{X^*} \otimes f) \circ \eta
\end{equation}
When we say `defined up to unitary equivalence', we mean that for any two duals $[X^*,\eta,\epsilon], [(X^*)',\eta',\epsilon']$ satisfying the condition~\eqref{eq:standardrepg} there exists a unitary $u: X^* \to (X^*)'$ such that 
\begin{align*}
\eta' = (u \otimes \id_X) \circ \eta 
&& 
\epsilon' = \epsilon \circ (\id_{X} \otimes u^{\dagger})
\end{align*}

\paragraph{Separable standard Frobenius algebras.} 
A \emph{Frobenius algebra} in $\Rep(G)$ is an object $A$ equipped with a \emph{multiplication} morphism $m: A \otimes A \to A$ and a \emph{unit} morphism $u: \mathbbm{1} \to A$, obeying the following conditions (which we call \emph{associativity}, \emph{unitality} and the \emph{Frobenius equation} respectively):
\begin{align*}
\includegraphics[scale=.8,valign=c]{pictures/tworepapp/frobassoc1.pdf}
~~=~~
\includegraphics[scale=.8,valign=c]{pictures/tworepapp/frobassoc2.pdf}
&&
\includegraphics[scale=.8,valign=c]{pictures/tworepapp/frobunit1.pdf}
~~=~~
\includegraphics[scale=.8,valign=c]{pictures/tworepapp/frobunit2.pdf}
~~=~~
\includegraphics[scale=.8,valign=c]{pictures/tworepapp/frobunit3.pdf}
&&
\includegraphics[scale=.8,valign=c]{pictures/tworepapp/frobfrob1.pdf}
~~=~~
\includegraphics[scale=.8,valign=c]{pictures/tworepapp/frobfrob2.pdf}
~~=~~
\includegraphics[scale=.8,valign=c]{pictures/tworepapp/frobfrob3.pdf}
\end{align*}
Here we have drawn the multiplication and unit $m,u$ and their daggers $m^{\dagger}: A \to A \otimes A$ and $u^{\dagger}: A \to \mathbb{C}$ (called the \emph{comultiplication} and the \emph{counit}) as white vertices; they can be distinguished by their type. 

We say that a Frobenius algebra is \emph{separable} if it obeys the following equation:
\begin{align*}
\includegraphics[scale=.8,valign=c]{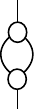}
~~=~~
\includegraphics[scale=.8,valign=c]{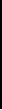}
\end{align*}
Every Frobenius algebra $A$ is self-dual, since the following cup and cap obey the snake equations~\eqref{eq:snake}:
\begin{align*}
\includegraphics[scale=.8,valign=c]{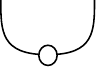}
~~:=~~
\includegraphics[scale=.8,valign=c]{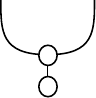}
&&
\includegraphics[scale=.8,valign=c]{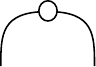}
~~:=~~
\includegraphics[scale=.8,valign=c]{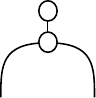}
\end{align*}
We say that the Frobenius algebra is \emph{standard} if this cup and cap form a standard duality --- that is, for any $f \in \End(A)$ the following equation is obeyed:
\begin{align*}
\includegraphics[scale=.8,valign=c]{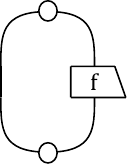}
~~=~~
\includegraphics[scale=.8,valign=c]{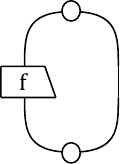}
\end{align*}
Separable standard Frobenius algebras ($\F$s) in $\Rep(G)$ correspond precisely to f.d.\ $G$-$C^*$-algebras. In one direction, a $G$-equivariant faithful positive linear functional on a f.d.\ $G$-$C^*$-algebra $A$ allows an inner product to be defined on that $C^*$-algebra, turning it into a f.d.\ unitary $G$-representation --- that is, an object $A$ of $\Rep(G)$~\cite[\S{}2.1]{Neshveyev2018}. The $G$-$C^*$-algebra structure yields multiplication  and unit morphisms $m: A \otimes A \to A$ and $u: \mathbbm{1} \to A$ in $\Rep(G)$ which satisfy the axioms of a Frobenius algebra. As was shown in~\cite[Thm. 2.11]{Neshveyev2018}, there is a unique choice of \emph{separable standard} $G$-equivariant faithful positive linear functional $\phi_A$ on any f.d.\ $G$-$C^*$-algebra $A$ such that the resulting Frobenius algebra is separable and standard (this functional is a renormalisation of the state in the cited theorem). The linear functional is recovered as the counit $u^{\dagger}: A \to \mathbb{C}$ of the resulting $\F$. For the other direction, see~\cite[\S{}3]{Verdon2020b}.

\paragraph{Dagger bimodules and bimodule morphisms.} Let $A,B$ be $\F$s in $\Rep(G)$. We say that an \emph{$A,B$-dagger bimodule} is an object $M$ of $\Rep(G)$ equipped with left and right \emph{action} maps $m_A: A \otimes M \to M$ and $m_B: M \otimes B \to M$ satisfying the following axioms:
\begin{align*}
\includegraphics[scale=.8,valign=c]{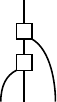}
~~=~~
\includegraphics[scale=.8,valign=c]{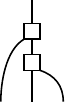}
&&
\includegraphics[scale=.8,valign=c]{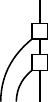}
~~=~~
\includegraphics[scale=.8,valign=c]{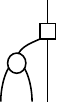}
&&
\includegraphics[scale=.8,valign=c]{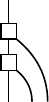}
~~=~~
\includegraphics[scale=.8,valign=c]{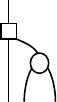}
\\
\includegraphics[scale=.8,valign=c]{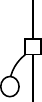}
~~=~~
\includegraphics[scale=.8,valign=c]{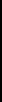}
~~=~~
\includegraphics[scale=.8,valign=c]{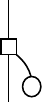}
&&
\includegraphics[scale=.8,valign=c]{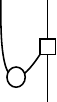}
~~=~~
\includegraphics[scale=.8,valign=c]{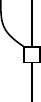}
&&
\includegraphics[scale=.8,valign=c]{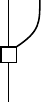}
~~=~~
\includegraphics[scale=.8,valign=c]{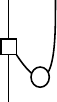}
\end{align*}
Here we have drawn the left and right actions $m_A$, $m_B$ and their daggers $m_A^{\dagger}: M \to A \otimes M$ and $m_B^{\dagger}: M \to M \otimes B$ as square vertices with $A$ or $B$ entering at a corner and $M$ passing through the vertical centre; they can then be distinguished by their type.

It is worth remarking here that $A,B$-dagger bimodules correspond to f.d.\ equivariant Hilbert $C^*$-bimodules between the f.d.\ $G$-$C^*$-algebras $A$ and $B$, as was observed in~\cite[\S{}2.3]{Neshveyev2018}. By `f.d.\ equivariant Hilbert $C^*$-bimodule' we mean a f.d.\ equivariant right Hilbert $C^*$-module $M$ over the $G$-$C^*$-algebra $B$ as defined in~\cite[\S{}1.2]{Neshveyev2018}, together with a covariant unital $*$-homomorphism from the $G$-$C^*$-algebra $A$ into the unital $G$-$C^*$-algebra of endomorphisms of the module $M$. Indeed, for any f.d.\ equivariant Hilbert $C^*$-bimodule $M$ the inner product $\phi_B(\braket{x|y})$, where $\phi_B$ is the separable standard functional on the $C^*$-algebra $B$, turns the $C^*$-bimodule into a unitary $G$-representation, i.e. an object of $\Rep(G)$. The equivariant Hilbert $C^*$-bimodule structure further induces an $A,B$-dagger bimodule structure on this object, where $A,B$ are now the $\F$s corresponding to the $G$-$C^*$-algebras. In the other direction, the $B$-valued inner product on an $A,B$-dagger bimodule is recovered as:
\begin{align*}
\braket{x|y} :=  \includegraphics[scale=.8,valign=c]{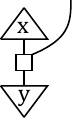}
\end{align*}
Let $M, N$ be $A,B$-dagger bimodules and let $f: M \to N$ be a morphism in $\Rep(G)$. We say that it is a \emph{bimodule morphism} if it intertwines the $A$- and $B$-actions:
\begin{calign}\label{eq:bimodule}
\includegraphics[scale=.8,valign=c]{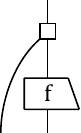}
~~=~~
\includegraphics[scale=.8,valign=c]{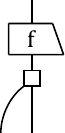}
&&
\includegraphics[scale=.8,valign=c]{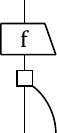}
~~=~~
\includegraphics[scale=.8,valign=c]{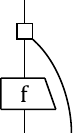}
\end{calign}

\paragraph{The interior tensor product of dagger bimodules.} Let $A,B,C$ be $\F$s, let $M$ be an $A,B$-dagger bimodule and let $N$ be a $B,C$-dagger bimodule. Their \emph{interior tensor product} $M \otimes_B N$ is a $A,C$-dagger bimodule defined as follows. The following morphism is an idempotent in $\End(M \otimes N)$:
\begin{align*}
\includegraphics[scale=.8,valign=c]{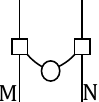}
\end{align*}
We split this idempotent to obtain an object $M \otimes_B N$ and an isometry $\iota: M \otimes_B N \to M \otimes N$; we will depict $\iota$ and its dagger $\iota^{\dagger}$ by an downwards and upwards-pointing triangle respectively. We have the following equations:
\begin{align*}
\includegraphics[scale=.8,valign=c]{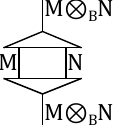}
~~=~~
\includegraphics[scale=.8,valign=c]{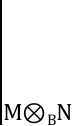}
&&
\includegraphics[scale=.8,valign=c]{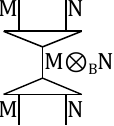}
~~=~~
\includegraphics[scale=.8,valign=c]{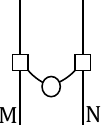}
\end{align*}
The left $A$-action and right $B$-action on $M \otimes_B N$ are defined as follows:
\begin{align*}
\includegraphics[scale=.8,valign=c]{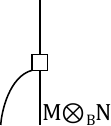}
~~:=~~
\includegraphics[scale=.8,valign=c]{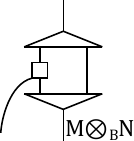}
&&
\includegraphics[scale=.8,valign=c]{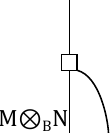}
~~:=~~
\includegraphics[scale=.8,valign=c]{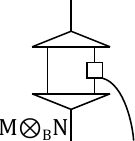}
\end{align*}
The interior tensor product extends to bimodule morphisms: given $A,B$-dagger bimodules $M,N$ and $B,C$-dagger bimodules $O,P$, and bimodule morphisms $f: M \to N$ and $g: O \to P$, there is a bimodule morphism $f \otimes g: M \otimes_B O \to N \otimes_B P$ defined as follows:
\begin{align*}
\includegraphics[scale=.8,valign=c]{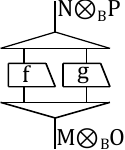}
\end{align*}

\paragraph{The 2-category $\TwoRep(G)$.} The $C^*$-2-category $\TwoRep(G)$ is defined as follows:
\begin{itemize}
\item Objects are $\F$s $A,B,\dots$ in $\Rep(G)$.
\item 1-morphisms $A \to B$ are $A,B$-dagger bimodules $M,N,\dots$. Composition of 1-morphisms is given by interior tensor product. 
\item 2-morphisms $M \to N$ are bimodule morphisms. Horizontal composition of 2-morphisms is given by interior tensor product. 
\end{itemize}
Let $\mathbbm{1}$ be the trivial $\F$ in $\Rep(G)$. (That is, the tensor unit equipped with the identity morphism as multiplication and unit; we assume WLOG that $\Rep(G)$ is strict.) There is an equivalence (in fact, an isomorphism) between the tensor category $\End(\mathbbm{1})$ in $\TwoRep(G)$ and the tensor category $\Rep(G)$: indeed, a $\mathbbm{1},\mathbbm{1}$-dagger bimodule is simply an object of $\Rep(G)$, and the intertwiner condition~\eqref{eq:bimodule} for a bimodule morphism is trivial. We can therefore consider $\TwoRep(G)$ as a sort of extension or completion of $\Rep(G)$, which `lives' inside $\TwoRep(G)$ as an endomorphism category. Indeed, $\TwoRep(G)$ is known as the \emph{$Q$-system completion} of $\Rep(G)$~\cite{Chen2022}. 

\subsection{Monoidal structure and traces on $\TwoRep(G)$}\label{app:monoidal}

We say that a compact quantum group is \emph{quasitriangular} if it has a unitary $R$-matrix~\cite[Def. 2.6.2]{Neshveyev2013}. In this case, the category $\Rep(G)$ of f.d.\ continuous unitary representations of $G$ is braided~\cite[P.82]{Neshveyev2013}. Without loss of generality we can take $\Rep(G)$ to be strict monoidal. We use the definition of $\TwoRep(G)$ as the 2-category of separable standard Frobenius algebras ($\F$s), dagger bimodules and bimodule homomorphisms in $\Rep(G)$; we refer to Appendix~\ref{app:tworepdef} or~\cite[\S{}3.1]{Verdon2021} for definitions and notation. 

We refer to~\cite[Def. 4.1]{Kapranov1994} for the definition of a monoidal 2-category; the $C^*$-version is identical except that the map $\boxtimes$ should be unitary and linear on 2-morphisms and the structure 2-isomorphisms should be unitary. We define the tensor product $\boxtimes$ on $\TwoRep(G)$ as follows:
\begin{itemize}
\item Tensor product of objects: $A \boxtimes B := A \otimes B$, i.e. the tensor product of $\F$s, whose multiplication and unit are as follows:
\begin{calign}\label{eq:prodalg}
\includegraphics[scale=.8,valign=c]{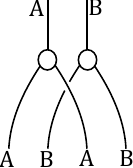}
&&
\includegraphics[scale=.8,valign=c]{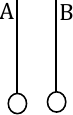}
\end{calign}
\item Tensor product of 1-morphisms: ${}_{A_1}M_{A_2} \boxtimes {}_{B_1}N_{B_2} := {}_{(A_1 \otimes B_1)}(M \otimes N)_{(A_2 \otimes B_2)}$, where the tensor product bimodule action is defined as follows:
\begin{calign}\label{eq:tpbimodact}
\includegraphics[scale=.8,valign=c]{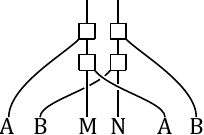}
\end{calign}
\item Tensor product of 2-morphisms: $f \boxtimes g := f \otimes g$. (This is clearly an intertwiner for the action~\eqref{eq:tpbimodact}.)
\item Monoidal unit for tensor product of objects: the trivial $\F$ on the simple unit object $\mathbbm{1}$.
\item First unitary tensorator ($(\rightarrow \otimes \rightarrow)$ in~\cite[Def. 4.1]{Kapranov1994}):
\begin{align*}
&\includegraphics[scale=.8,valign=c]{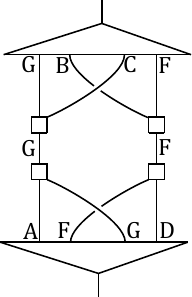}
\\
({}_A A_A \boxtimes {}_B F_{D}) \otimes_{A \otimes D} ({}_A G_{C} &\boxtimes {}_D D_D) \to 
({}_A G_{C} \boxtimes {}_B B_B ) \otimes_{C \otimes B} ({}_C C_C \boxtimes {}_B F_{D} )
\end{align*}
(Here the triangles are the projectors and inclusions for the tensor product of dagger bimodules in $\Rep(G)$; see~\cite[Eq. 28]{Verdon2021}. Recall that the identity 1-morphism $A \to A$ in $\TwoRep(G)$ is the bimodule ${}_A A_A$.)
\item Second unitary tensorator ($(\rightarrow \rightarrow \otimes ~\cdot~)$) in~\cite[Def. 4.1]{Kapranov1994}):
\begin{align*}
&\includegraphics[scale=.8,valign=c]{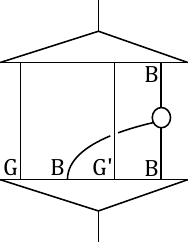}
\\
({}_A G_{A'} \boxtimes {}_B B_B) \otimes_{A' \otimes B} ({}_{A'} G'_{A''} &\boxtimes {}_B B_B) \to 
({}_A G_{A'} \otimes_{A'} {}_{A'} G'_{A''}) \boxtimes {}_B B_B
\end{align*}
\item Third unitary tensorator ($(~\cdot~ \otimes \rightarrow \rightarrow)$ in~\cite[Def. 4.1]{Kapranov1994}):
\begin{align*}
&\includegraphics[scale=.8,valign=c]{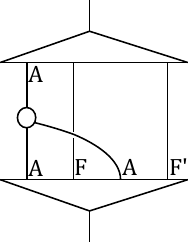}
\\
({}_A A_A \boxtimes {}_B F_{B'}) \otimes_{A \otimes B'} ({}_A A_A &\boxtimes {}_{B'} F'_{B''}) \to  {}_A A_A \boxtimes 
({}_A F_{A'} \otimes_{A'} {}_{A'} F'_{A''})
\end{align*}
\item All the other structure 2-morphisms are trivial, since we have taken $\Rep(G)$ to be strict. 
\end{itemize}
We leave the checks that the polyhedra in~\cite[Def. 4.1]{Kapranov1994} commute to the reader. We now define the trace and the partial trace.
\begin{definition}\label{def:endotrace}
Let $r$ be an object of $\TwoRep(G)$, corresponding to a $\F$ $A$. The trace $\Tr_r: \End(r) \to \mathbb{C}$ is defined as follows:
\begin{calign}\label{eq:endotracebimod}
f \in \End({}_{A} A_A) && \mapsto &&
\includegraphics[scale=.8,valign=c]{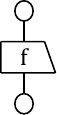}
\end{calign}
\end{definition}
\begin{remark}
Note that this is not the trace defined in~\cite[\S{}2.4]{Verdon2021}. In fact, this trace is a better choice even in the non-monoidal setting of that work; for instance, with this choice the cup and cap in~\cite[Eq. 41]{Verdon2021} are already spherical, so there is no need to normalise as in~\cite[Rem. 3.13, Eq. 51]{Verdon2021}. 
\end{remark}
\noindent
We recall a couple of definitions from~\cite[\S{}3.1.2]{Verdon2021}. Let $r,s$ be objects in $\TwoRep(G)$, corresponding to algebras $A,B$. For a 1-morphism $X: r \to s$ --- that is, an $(A,B)$-dagger bimodule ${}_A X_B$ --- the dual ${}_B(X^*)_{A}: s \to r$ is the $(B,A)$-dagger bimodule on the object $X^*$ in $\Rep(G)$, with the following actions of $B$ and $A$:
\begin{align*} 
\includegraphics[scale=.7,valign=c]{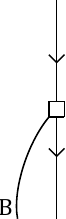}
~~:=~~
\includegraphics[scale=.7,valign=c]{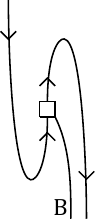}
&&
\includegraphics[scale=.7,valign=c]{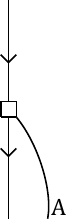}
~~:=~~
\includegraphics[scale=.7,valign=c]{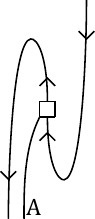}
\end{align*}
Here the left and right cups and caps we have used to define the action on the dual are the cups and caps of the standard duality in $\Rep(G)$. 

Having defined our dual 1-morphism, we now define a right cup 2-morphism $\eta_{{}_A X_B}:  {}_B B_B \to  {}_B(X^*) \otimes_A X_B $ and a right cap 2-morphism $\epsilon_{{}_A X_B}: {}_A X \otimes_B (X^*)_A \to {}_A A_A$ witnessing the duality:
\begin{calign}\label{eq:cupscapsbimod}
\includegraphics[scale=.7,valign=c]{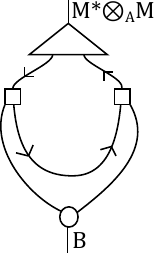} &&
\includegraphics[scale=.7,valign=c]{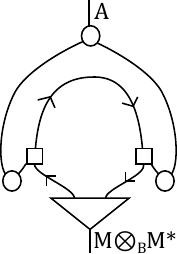}
\end{calign}
The left cup and cap are the daggers of these 2-morphisms. 

\begin{lemma}\label{lem:endotrace}
Let $r,s$ be any objects of $\TwoRep(G)$. For any 1-morphism $X: r \to s$ and any $f \in \End(X)$ we have the following equality:
\begin{calign}\label{eq:tracestoprove}
\Tr_s\left(
\includegraphics[valign=c]{pictures/2cats/ltrace.pdf}\right)
~=~
\Tr_r\left(\includegraphics[valign=c]{pictures/2cats/rtrace.pdf}\right)
\end{calign}
(Here the diagrams are drawn in $\TwoRep(G)$.) The resulting map $\Tr: \End(X) \to \mathbb{C}$ is a positive faithful trace on $\End(X)$.
\end{lemma}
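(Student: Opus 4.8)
The plan is to unfold both sides of~\eqref{eq:tracestoprove} into the graphical calculus of $\Rep(G)$ and thereby reduce the claimed equality to the standardness (sphericality) of the duality in $\Rep(G)$ together with the standardness of the $\F$s. First I would substitute the definitions of the bimodule cup and cap $\eta_X, \epsilon_X$ from~\eqref{eq:cupscapsbimod}, together with the definitions of the endomorphism traces $\Tr_s, \Tr_r$ from~\eqref{eq:endotracebimod}. The cups and caps of a $1$-morphism $X = {}_A X_B$ are built from the \emph{standard} cups and caps of the underlying object $X \in \Rep(G)$, the left and right module actions, and the idempotent projector onto $X^* \otimes_A X$; the functional $\Tr_s$ (resp. $\Tr_r$) closes up an endomorphism of the identity bimodule ${}_B B_B$ (resp. ${}_A A_A$) using the Frobenius structure of $B$ (resp. $A$). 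After these substitutions each side of~\eqref{eq:tracestoprove} becomes a single \emph{closed} diagram in $\Rep(G)$ assembled from $f$, the standard dualities on $X$, $A$ and $B$, and the Frobenius multiplications and comultiplications. The left-trace side closes the $X$-loop around the $A$-side and then around the $B$-side, whereas the right-trace side performs these two closures in the opposite order.

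To identify the two closed diagrams I would slide $f$ freely along the module wires using the intertwining relations~\eqref{eq:bimodule}, simplify the loops created by the idempotent projectors using the Frobenius and separability relations, and finally exchange a left-handed closure for a right-handed one using the standardness equations: equation~\eqref{eq:standardrepg} for the duality in $\Rep(G)$, and the analogous trace-sphericality condition built into the definition of a standard $\F$ for $A$ and $B$. Concretely, the goal is to rewrite each closed diagram as the standard categorical trace in $\Rep(G)$ of $f$ viewed as an endomorphism of the object $X$, with the module structure absorbed into a fixed normalisation; once both sides are in this common form, sphericality of the standard duality in $\Rep(G)$ yields the equality directly. This equality is exactly what justifies writing a single symbol $\Tr(f)$ as in~\eqref{eq:qtrace}.

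For the remaining assertions I would argue as follows. Given $f \geq 0$, write $f = g^{\dagger} \circ g$; the left-trace diagram then becomes a composite of a $\Rep(G)$-morphism with its dagger, closed up by standard cups and caps and the counit $u_B^{\dagger}$ of the $\F$. Since the standard categorical trace on $\Rep(G)$ is a positive functional and the separable standard functional is positive, $\Tr(f) \geq 0$, which gives positivity. Faithfulness follows because $\Tr(g^{\dagger} \circ g) = 0$ forces the underlying positive $\Rep(G)$-diagram to vanish, and hence $g = 0$ by faithfulness of the standard trace. The trace property $\Tr(g \circ h) = \Tr(h \circ g)$ is then obtained by isotopy, sliding $h$ around the closed loop; this manipulation is legitimate precisely because the left-handed and right-handed closures agree by the first part of the proof.

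The main obstacle is the bookkeeping in the unfolding step. Because the bimodule composition $\otimes_A$ is defined by splitting an idempotent, the cups and caps in~\eqref{eq:cupscapsbimod} carry projectors, and one must verify that upon unfolding these projectors can be absorbed — using separability and the module axioms — so that the closed diagram collapses cleanly onto the $\Rep(G)$-trace rather than leaving residual structure. Moreover, matching the \emph{normalisations} on the two sides exactly, rather than merely up to an unknown scalar, is where standardness of the $\F$s (and not just separability) is genuinely used; this is the delicate point that must be checked with care.
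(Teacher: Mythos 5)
Your proposal follows essentially the same route as the paper's proof: unpack both sides using the definitions of $\Tr_r$, $\Tr_s$ in~\eqref{eq:endotracebimod} and the bimodule cups and caps~\eqref{eq:cupscapsbimod}, reduce each closed diagram to the standard trace of $f$ as a morphism in $\Rep(G)$ (the exchange of left- and right-handed closures being exactly where sphericality of the standard duality is invoked), and then inherit positivity and faithfulness from the standard $\Rep(G)$-trace. This is correct, and your attention to absorbing the idempotent projectors and matching normalisations via standardness of the $\F$s is precisely the bookkeeping the paper's diagram chain carries out.
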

\begin{proof}
Let $A,B$ be the algebras corresponding to $r$ and $s$ respectively. Unpacking the expressions in~\eqref{eq:tracestoprove} using~\eqref{eq:endotracebimod} and~\eqref{eq:cupscapsbimod}, the equality~\eqref{eq:tracestoprove} is seen as follows:
\begin{calign}\label{eq:spherical}
\includegraphics[scale=.7,valign=c]{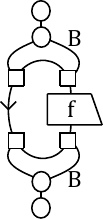}
~~=~~
\includegraphics[scale=.7,valign=c]{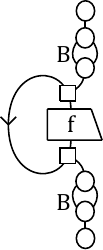}
~~=~~
\includegraphics[scale=.7,valign=c]{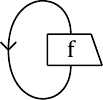}
~~=~~
\includegraphics[scale=.7,valign=c]{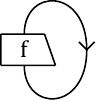}
~~=~~
\includegraphics[scale=.7,valign=c]{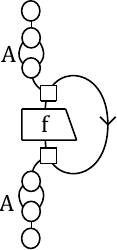}
~~=~~
\includegraphics[scale=.7,valign=c]{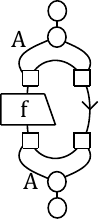}
\end{calign}
Here for the middle equality we used sphericality of the standard duality in $\Rep(G)$. Since the middle two expressions are simply the standard trace of $f$ as a morphism in $\Rep(G)$, we see that $\Tr$ is also positive and faithful.
\end{proof}
\noindent
Finally, we define the partial trace. 
\begin{definition}\label{def:partialtrace}
For an object $A \boxtimes B$ in $\TwoRep(G)$ and a morphism $X \in \End(A)$ we define the \emph{partial trace} $\Tr_s: \End(X \boxtimes {}_B B_B) \to \End(X)$ as follows:
\begin{align*}
\Tr_s(f)~~:=~~
\includegraphics[scale=.8,valign=c]{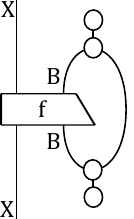}
\end{align*}
\end{definition}
\begin{lemma}\label{lem:partialtrace}
The partial trace is faithful and positive, and $\Tr_r \circ \Tr_s = \Tr_{r \boxtimes s}$.
\end{lemma}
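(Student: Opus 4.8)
The plan is to establish the three assertions in the order: positivity of the partial trace, then the composition identity $\Tr_r \circ \Tr_s = \Tr_{r \boxtimes s}$, and finally faithfulness, which I would deduce formally from the first two together with faithfulness of the full trace (Lemma~\ref{lem:endotrace}). Throughout, $\Tr_r$ and $\Tr_{r \boxtimes s}$ denote the full traces of~\eqref{eq:qtrace} on $\End(X)$ and $\End(X \boxtimes {}_B B_B)$ respectively, while $\Tr_s$ is the partial trace of Definition~\ref{def:partialtrace}.

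For positivity, I would write a positive element of $\End(X \boxtimes {}_B B_B)$ as $g^{\dagger} \circ g$ and rewrite $\Tr_s(g^{\dagger} \circ g)$ using the snake equations~\eqref{eq:snake} and the sliding equations~\eqref{eq:sliding} to bend the outgoing $B$-leg of $g$ around the cap appearing in Definition~\ref{def:partialtrace}. Since the cap closing the $s$-region is the dagger of the corresponding cup (the left cup and cap in~\eqref{eq:cupscapsbimod} being the daggers of the right ones), this exhibits $\Tr_s(g^{\dagger} \circ g)$ in the form $h^{\dagger} \circ h$ for an explicit $h \in \End(X)$, namely $g$ with its outgoing $B$-wire folded into an auxiliary environment; this is manifestly positive. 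This is just the categorical version of the elementary fact that the partial trace of a positive operator is positive, and the only care needed is that the tensorators of the monoidal $2$-category are unitary, so that the folding does not disturb positivity.

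For the composition identity I would unpack all traces into the base category $\Rep(G)$. By~\eqref{eq:qtrace} both $\Tr_{r \boxtimes s}$ and $\Tr_r$ are obtained by closing off the relevant $1$-morphism with its cups and caps and then applying the object-level trace of Definition~\ref{def:endotrace}, which in turn reduces, exactly as in the computation~\eqref{eq:spherical}, to the ordinary standard trace of $\Rep(G)$. The key structural inputs are that the dual of $X \boxtimes {}_B B_B$ is $X^{*} \boxtimes {}_B B_B$, and that up to the unitary tensorators the cup and cap of $X \boxtimes {}_B B_B$ factor as those of $X$ on the $r$-plane together with those of ${}_B B_B$ on the $s$-plane; combined with the product algebra structure~\eqref{eq:prodalg} on $A \otimes B$ and the tensor-product bimodule action~\eqref{eq:tpbimodact}, this makes the object trace on $A \otimes B$ factor as the standard trace over $B$ followed by the standard trace over $A$. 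Closing the $s$-plane first is precisely the operation $\Tr_s$, and closing the remaining $r$-plane is $\Tr_r$; sphericality (the middle equality of~\eqref{eq:spherical}) guarantees that the order of these closures is immaterial, giving $\Tr_r(\Tr_s(f)) = \Tr_{r \boxtimes s}(f)$.

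Faithfulness of $\Tr_s$ then follows with no further diagrammatics: if $a \in \End(X \boxtimes {}_B B_B)$ is positive with $\Tr_s(a) = 0$, the composition identity gives $\Tr_{r \boxtimes s}(a) = \Tr_r(\Tr_s(a)) = 0$, and since $\Tr_{r \boxtimes s}$ is a faithful trace by Lemma~\ref{lem:endotrace} we conclude $a = 0$. I expect the main obstacle to be the second step: verifying carefully in the three-dimensional graphical calculus that the full trace over $A \otimes B$ genuinely splits into the standard traces over the two factors once the product Frobenius structure~\eqref{eq:prodalg} and the tensorators are taken into account, and that all normalisation factors (the $n_X$, $d_X$ and their $B$-counterparts hidden inside the traces) match up correctly. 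The remaining steps are routine consequences of duality and of the already-established properties of the trace.
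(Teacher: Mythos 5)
Your proposal is correct and takes essentially the same route as the paper: positivity is immediate (your $g^{\dagger}\circ g$ folding argument just spells out what the paper dismisses as clear), the identity $\Tr_r \circ \Tr_s = \Tr_{r \boxtimes s}$ is obtained by reducing everything to the standard trace in $\Rep(G)$ via~\eqref{eq:spherical} and standardness of the $\F$ and of the tensor-product dual, and faithfulness is then the same formal consequence of the composition identity plus faithfulness of $\Tr_{r \boxtimes s}$ from Lemma~\ref{lem:endotrace}.
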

\begin{proof}
The partial trace is clearly positive. Since we saw in~\eqref{eq:spherical} that the trace $\Tr(f)$ is just the trace as a morphism in $\Rep(G)$, it follows that $\Tr_r \circ \Tr_s = \Tr_{r \boxtimes s}$, since the $\F$ is standard and the tensor product dual~\cite[Eq. 4]{Verdon2021} is standard. It then follows that the partial trace is faithful, because we know that $\Tr_{r \boxtimes s}$ is faithful, and if $\Tr_s(f) = 0$ then $\Tr_{r \boxtimes s} = 0$.
\end{proof}

\end{document}